\newtheorem{theorem}{Theorem}[section]
\newtheorem{lemma}[theorem]{Lemma}
\newtheorem{corollary}[theorem]{Corollary}
\newtheorem{conjecture}[theorem]{Conjecture}
\newtheorem*{theorem*}{Theorem}
\theoremstyle{remark}
\newtheorem{remark}[theorem]{Remark}
\newtheorem{question}{Question}
\numberwithin{equation}{section}
\begin{document}
\title[ Minimally Intersecting Filling Pairs]{ Minimally Intersecting Filling Pairs on Surfaces}

\author{Tarik Aougab, Shinnyih Huang}

\address{Department of Mathematics \\ Yale University \\ 10 Hillhouse Avenue, New Haven, CT 06510 \\ USA}
\email{tarik.aougab@yale.edu, shinnyih.huang@yale.edu}

\date{\today}

\keywords{ Mapping Class Group, Filling Pairs}

\begin{abstract}

Let $S_{g}$ denote the closed orientable surface of genus $g$. We construct exponentially many mapping class group orbits of pairs of simple closed curves which fill $S_{g}$ and intersect minimally, by showing that such orbits are in correspondence with the solutions of a certain permutation equation in the symmetric group. Next, we demonstrate that minimally intersecting filling pairs are combinatorially optimal, in the sense that there are many simple closed curves intersecting the pair exactly once. We conclude by initiating the study of a topological Morse function $\mathcal{F}_{g}$ over the Moduli space of Riemann surfaces of genus $g$, which, given a hyperbolic metric $\sigma$, outputs the length of the shortest, minimally intersecting filling pair for the metric $\sigma$. We completely characterize the global minima of $\mathcal{F}_{g}$, and using the exponentially many mapping class group orbits of minimally intersecting filling pairs that we construct in the first portion of the paper, we show that the number of such minima grow at least exponentially in $g$. 

\end{abstract}

\maketitle

\section{Introduction}

Let $S_{g,b}$ denote the orientable surface of genus $g$ with $b$ boundary components, and let $\mbox{Mod}(S_{g,0})$ denote the \textit{mapping class group} of the closed surface $S_{g,0}$, the group of all orientation preserving homeomorphisms of $S_{g,0}$, modulo isotopy (see Farb-Margalit \cite{Far-Mar} for background). 

A pair of curves on $S_{g}$ (shorthand for $S_{g,0}$) are said to \textit{fill} the surface if the complement of their union is a disjoint union of topological disks. 

Let $\mathcal{I}(S_{g})$ denote the set of all isotopy classes of simple closed curves on $S_{g}$. Then $\mbox{Mod}(S_{g})$ acts coordinate-wise on the product $\mathcal{I}(S_{g})\times \mathcal{I}(S_{g})$, and this action descends to an action on the quotient
\[\mathcal{P}(S_{g}):=\mathcal{I}(S_{g})\times \mathcal{I}(S_{g})/[(\alpha,\beta)\sim (\beta,\alpha)].\]

Our first result establishes the exponential growth of the $\mbox{Mod}(S_{g})$-orbits of $\mathcal{P}(S_{g})$ which fill and intersect minimally:

\begin{theorem} \label{thm:1}
There exists a function $f(g) \sim 3^{g/2}/g^{2}$ satisfying the following: Let $N(g)$ denote the number of $\mbox{Mod}(S_{g})$-orbits of minimally intersecting filling pairs on $S_{g}$. Then  

\[  f(g) \leq N(g)\leq 2^{2g-2}(4g-5)(2g-3)!.\]
\end{theorem}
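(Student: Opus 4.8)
The plan is to first reduce Theorem \ref{thm:1} to a counting problem. If $(\alpha,\beta)$ is a minimally intersecting filling pair, then an Euler characteristic count forces $i(\alpha,\beta)=2g-1$ and $S_{g}\setminus(\alpha\cup\beta)$ to be a single $(8g-4)$-gon; moreover $\alpha\cup\beta$ is a $4$-regular graph $G$ on the $2g-1$ intersection points, through each of which $\alpha$ and $\beta$ each pass exactly once, so $G$ is the union of two edge-disjoint Hamiltonian cycles $\alpha,\beta$ on the vertex set $\{1,\dots,2g-1\}$, together with the rotation system (ribbon structure) recording the local picture at the crossings. Writing $\alpha,\beta$ as $n$-cycles $\sigma,\tau\in S_{n}$ with $n=2g-1$, and the $n$ crossing signs as a vector $\epsilon\in\{\pm 1\}^{n}$, the edge-matching and the rotation become explicit permutations $\iota=\iota(\sigma,\tau)$ and $\phi=\phi(\epsilon)$ of the $8g-4$ darts, and the requirement that the complement be one disk becomes the permutation equation that $\iota\phi$ be an $(8g-4)$-cycle, with the side condition that $\sigma$ and $\tau$ share no edge; orientability, connectedness and the genus being exactly $g$ are then automatic, and $\alpha,\beta$ are automatically simple and connected because each is a Hamiltonian cycle. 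Two triples $(\sigma,\tau,\epsilon)$ give the same class in $\mathcal{P}(S_{g})/\mbox{Mod}(S_{g})$ precisely when they differ by relabeling the vertices, by the swap $\sigma\leftrightarrow\tau$, or by simultaneously reversing the orientations of $\alpha$, $\beta$ and $S_{g}$. Normalizing $\sigma=(1\,2\,\cdots\,n)$ by relabeling, $N(g)$ equals the number of solutions $(\tau,\epsilon)$ of the permutation equation, counted up to the finite residual group generated by the dihedral group $D_{n}$ stabilizing this cyclic word, the $\sigma\leftrightarrow\tau$ swap, and orientation reversal.

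\textbf{Upper bound.} Since passing to residual-group orbits only decreases the count, it suffices to bound the number of solutions $(\tau,\epsilon)$. There are $(2g-2)!$ choices of an $n$-cycle $\tau$ and $2^{2g-1}$ choices of $\epsilon$; imposing the edge-disjointness side condition and the one-face equation, and using the residual symmetry to pin one coordinate of $\tau$ and one bit of $\epsilon$, trims this to $2^{2g-2}(4g-5)(2g-3)!$ by an elementary, if slightly fiddly, bookkeeping. Equivalently, one may fix $\alpha$ (a non-separating curve, unique up to $\mbox{Mod}(S_{g})$, with its $2g-1$ marked crossings), cut $S_{g}$ along it to obtain $S_{g-1,2}$, note that $\beta$ becomes a system of $2g-1$ disjoint arcs with $2g-1$ endpoints on each boundary circle and a single complementary disk, and bound the number of such systems together with their cyclic regluing data.

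\textbf{Lower bound, and the main obstacle.} To get $N(g)\geq f(g)$ with $f(g)\sim 3^{g/2}/g^{2}$ I would exhibit an explicit family of solutions. One builds a ``chain'' of $\lfloor g/2\rfloor$ genus-$2$ gadgets --- a fixed minimally intersecting filling configuration on a subsurface with boundary into which roughly three inequivalent local patchings can be inserted --- and concatenates them around the $(8g-4)$-gon; each choice-sequence produces a minimally intersecting filling pair on $S_{g}$, giving at least $3^{\lfloor g/2\rfloor}$ pairs. The heart of the matter --- and the main obstacle of the theorem --- is that these pairs fall into at least $3^{g/2}/g^{2}$ distinct $\mbox{Mod}(S_{g})$-orbits. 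For this I would read off the combinatorial model a $\mbox{Mod}(S_{g})$-invariant, essentially the cyclic word of gadget types recorded up to the residual dihedral and reflection symmetry, prove that it is genuinely invariant (the delicate point: ruling out a mapping class carrying one gadget arrangement to another while destroying the chain structure), compute it on the family, and observe that each value is realized by at most $O(g^{2})$ members of the family (the cyclic rotations and the reflection of the chain, together with the bounded ambiguity in its placement); a Stirling-type estimate for the number of distinct cyclic words then yields $f(g)\sim 3^{g/2}/g^{2}$. By comparison, the combinatorial reduction and the upper bound are routine.
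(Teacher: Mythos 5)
Your reduction to a counting problem (two Hamiltonian cycles plus crossing signs, a one-face permutation condition, modulo a finite residual group) is essentially the paper's ``filling permutation'' model, and it is fine as far as it goes. But both halves of your argument stop exactly where the real work begins. For the lower bound, you never exhibit the gadgets: you assert a genus-$2$ configuration admitting ``roughly three inequivalent local patchings'' without constructing it, and the base $3$ appears out of thin air. In the paper the exponential growth does not come from three gadget types at all; it comes from a single genus-$2$ piece (a pair $(T_{m}(\mbox{w}),\mbox{w})$ on $S_{2}$ whose four complementary disks all meet at one intersection point --- this special property is what guarantees that attaching the piece at an intersection point keeps the complement a single disk, hence keeps the pair minimally intersecting, a point your sketch never addresses) together with the \emph{choice of attachment point} at each stage; the count is the number of monotone sequences $\alpha_{i}\leq 4i-3$, which is what produces $3^{g/2}/g^{2}$ after dividing by the orientation ambiguities. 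More seriously, you yourself flag the injectivity step (``ruling out a mapping class carrying one gadget arrangement to another'') as the heart of the matter and then do not supply it; the paper's Lemmas 2.5--2.7 (distinct $Z$-pieces cannot share any $\alpha$- or $\beta$-arcs, so the pieces can be peeled off canonically and the attachment sequence recovered) are precisely this missing rigidity argument. You also do not handle the parity issue: building in genus-$2$ increments requires two seeds (the paper needs an explicit minimally intersecting pair on $S_{4}$, given as a permutation in $\Sigma_{28}$, in addition to the torus seed).

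The upper bound has the same character of gap. The theorem demands the specific bound $2^{2g-2}(4g-5)(2g-3)!$, and your naive count is $2^{2g-1}(2g-2)!$ cycles-with-signs, which is strictly larger; you dismiss the difference as ``elementary, if slightly fiddly, bookkeeping'' and a vague ``pinning'' of coordinates, which is not an argument and, taken literally, would not produce that constant. The paper gets it by a concrete construction: writing admissible solutions as square roots $C$ of $Q^{4g-2}\tau$ (there are $2^{2g-1}(2g-1)!$ parity-respecting ones), then exhibiting an explicit family of $2^{2g-2}(2g-1)(2g-3)!$ such square roots for which $(Q^{4g-2}C)^{2}(1)=1$, so that $Q^{4g-2}C$ cannot be an $(8g-4)$-cycle and the complement has more than one face; subtracting these and dividing by the cyclic symmetry of order $2g-1$ yields $2^{2g-2}(4g-5)(2g-3)!$. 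Until you either reproduce such an exclusion argument or replace it with another proof of that exact inequality, the upper half of the theorem is unproved in your write-up.
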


In the statement of Theorem \ref{thm:1}, $\sim$ denotes the following growth-rate relation:
\[ r(g) \sim s(g) \Leftrightarrow 0< \lim_{g\rightarrow \infty} \frac{r(g)}{s(g)}< \infty.\]

We compare Theorem \ref{thm:1} to Theorem $1$ of Anderson-Parlier-Pettet \cite{And-Par-Pet}, which says that if $\Omega =\left\{\alpha_{1},...,\alpha_{n}\right\}$ is a filling set of simple closed curves on $S_{g}$ pairwise intersecting no more than $K$ times, then 
\[ n^{2}-n \geq \frac{4g-2}{K}.\]

Thus, when $n= 2, K\leq 2g-1$, and indeed we show in section $2$ that a minimally intersecting filling pair intersects $2g-1$ times if and only if $g\neq 2$ (when $g=2$, $4$ intersections is minimal).

Given a pair of filling curves $(\alpha,\beta)$, there are potentially many ways of defining a sort of combinatorial ``complexity'' for the pair. One such candidate is to count the number of simple closed curves intersecting the filling pair a small number of times. Heuristically, one expects a minimally intersecting filling pair to only ``barely'' fill $S_{g}$. Therefore, there should be many simple closed curves intersecting the union $\alpha\cup \beta$ only once. Indeed, we show:

\begin{theorem}\label{thm:2}
Let $(\alpha,\beta)$ be a filling pair on $S_{g}, g>2$ and define $T_{1}(\alpha,\beta) \in \mathbb{N}$ to be the number of simple closed curves intersecting $\alpha \cup \beta$ only once. Then $T_{1}(\alpha,\beta)\leq 4g-2$, with equality if $(\alpha,\beta)$ is minimally intersecting. 
\end{theorem}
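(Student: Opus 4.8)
The plan is to convert the count $T_1(\alpha,\beta)$ into a homological rank bound. A first reduction: any simple closed curve $\gamma$ with $i(\gamma,\alpha\cup\beta)=1$ is essential, and in a representative realizing this minimum it is disjoint from one of $\alpha,\beta$ and meets the other transversely in one point; so $T_1(\alpha,\beta)=|T_1^{\alpha}|+|T_1^{\beta}|$, where $T_1^{\alpha}$ (resp. $T_1^{\beta}$) consists of classes disjoint from $\beta$ and meeting $\alpha$ once (resp. disjoint from $\alpha$ and meeting $\beta$ once). If $\beta$ is separating then $[\beta]=0$ in $H_1(S_g;\mathbb{Z}/2)$, so every simple closed curve meets $\beta$ an even number of times and $T_1^{\beta}=\emptyset$; thus we may assume $\alpha$ and $\beta$ are nonseparating, the separating case only improving the bound. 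It then suffices to show $|T_1^{\alpha}|\le 2g-1$, and symmetrically $|T_1^{\beta}|\le 2g-1$.

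To bound $|T_1^{\alpha}|$ I would pass to $W:=S_g\setminus\beta$; since $\beta$ is nonseparating, $W\cong S_{g-1,2}$ and $\dim_{\mathbb{Q}}H_1(W;\mathbb{Q})=2g-1$. Cutting $S_g$ along $\beta$ turns $\alpha$ into the $n=i(\alpha,\beta)$ sub-arcs $a_1,\dots,a_n$ (its ``edges''), which fill $W$ in the sense that the complementary regions $W\setminus\bigcup_i a_i$ are exactly the disks $S_g\setminus(\alpha\cup\beta)$. A class $\gamma\in T_1^{\alpha}$, placed minimally, becomes a simple closed curve in the interior of $W$ meeting $\bigcup_i a_i$ in one point, lying on a single $a_i$. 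Cutting $W$ further along the $a_i$ cuts $\gamma$ into one arc inside one complementary disk with both endpoints on copies of $a_i$; since an arc in a disk joining a prescribed pair of boundary sub-arcs is unique up to isotopy, $\gamma$ is recovered up to isotopy from the index $i$, so $\gamma\mapsto a_i$ is a well-defined injection $T_1^{\alpha}\hookrightarrow\{a_1,\dots,a_n\}$.

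The crux is the following linear-independence step, using the intersection pairing $H_1(W;\mathbb{Z})\times H_1(W,\partial W;\mathbb{Z})\to\mathbb{Z}$. For $\gamma\in T_1^{\alpha}$ meeting $a_i$ we have $\langle[\gamma],[a_i]\rangle=\pm 1$ and $\langle[\gamma],[a_j]\rangle=0$ for $j\ne i$. Hence if $\gamma_1,\dots,\gamma_k\in T_1^{\alpha}$ are distinct, they meet distinct arcs $a_{i_1},\dots,a_{i_k}$ (injectivity), so the matrix $\bigl(\langle[\gamma_s],[a_{i_t}]\rangle\bigr)_{s,t}$ is diagonal with $\pm1$ entries, hence invertible, forcing $[\gamma_1],\dots,[\gamma_k]$ to be linearly independent in $H_1(W;\mathbb{Q})$; thus $k\le 2g-1$. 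I expect this to be the main obstacle: the point is to recognize that the $T_1$-curves are \emph{forced} to be homologically independent because each is Poincar\'e--Lefschetz dual to a distinct arc it meets exactly once. A naive attempt to bound the number of such curves by pairwise disjointness or pairwise non-isotopy fails, since those counts can be infinite; the duality argument is what makes the count finite and sharp.

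Finally, for equality when $(\alpha,\beta)$ is minimally intersecting and $g>2$: by the discussion in Section~2, $i(\alpha,\beta)=2g-1$, so an Euler characteristic count gives a single complementary disk. Every edge of $\alpha\cup\beta$ then appears twice on its boundary, so for each of the $2g-1$ edges $e$ of $\alpha$ the arc joining the two copies of $e$ in the disk reglues to a simple closed curve meeting $\alpha$ exactly once; it is essential (it meets $\alpha$ an odd number of times), hence lies in $T_1^{\alpha}$ and is the curve mapping to $e$. So $|T_1^{\alpha}|=2g-1$, and likewise $|T_1^{\beta}|=2g-1$, giving $T_1(\alpha,\beta)=4g-2$.
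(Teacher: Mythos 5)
Your proof is correct, but it takes a genuinely different route from the paper's. The paper works entirely with the polygonal decomposition obtained by cutting along $\alpha\cup\beta$: it observes that $T_{1}$ counts the pairs of polygon edges that project to the same arc and lie on the same polygon, and then uses the order-four ``rotation'' map $M$ around each vertex to show that every complementary polygon has at least four edges whose partner edge lies on a different polygon; an edge count then yields the bound. You instead split $T_{1}$ into the curves meeting $\alpha$ once (and disjoint from $\beta$) and vice versa, cut along $\beta$ to get $W\cong S_{g-1,2}$, and use the Poincar\'e--Lefschetz pairing $H_{1}(W)\times H_{1}(W,\partial W)\to\mathbb{Z}$ against the arcs of $\alpha$ to show that the curves in $T_{1}^{\alpha}$ represent linearly independent classes, hence number at most $\dim H_{1}(W;\mathbb{Q})=2g-1$. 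Your duality argument is cleaner and avoids the paper's delicate chase around the vertices; what it gives up is combinatorial refinement: the paper's edge count (carried out correctly it gives $8g-8$ rather than the $8g-4$ printed there) actually shows $T_{1}\leq 4g-4$ for pairs that do \emph{not} intersect minimally, an improvement the homological bound does not see. Two points you should make explicit. First, when $\beta$ is separating, $W$ is disconnected and $\dim H_{1}(W;\mathbb{Q})=2g$, which still gives $T_{1}=|T_{1}^{\alpha}|\leq 2g<4g-2$, so the reduction to the nonseparating case is harmless but not free. Second, in the equality case you need the $2g-1$ curves dual to the $\alpha$-arcs to be pairwise non-isotopic in $S_{g}$, not merely to pair differently with the arcs inside $W$; this follows from the standard fact that essential, non-peripheral simple closed curves in the essential subsurface $W$ which are isotopic in $S_{g}$ are already isotopic in $W$, after which your pairing computation separates them. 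The paper elides this last point as well, so you are no less rigorous than the source.
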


Let $\mathcal{M}(S_{g})$ denote the \textit{moduli space} of Riemann surfaces of genus $g$, which is identified with the space of all complete, finite volume hyperbolic metrics on $S_{g}$ up to isometry. The \textit{systole} function $\mathcal{R}_{g}:\mathcal{M}(S_{g})\rightarrow \mathbb{R}_{+}$ outputs the length of the shortest closed geodesic for a given hyperbolic metric $\sigma\in \mathcal{M}(S_{g})$. 

$\mathcal{R}_{g}$ has been studied extensively, (see Buser-Sarnak \cite{Bus-Sar}; Parlier \cite{Par1}, \cite{Par2}; and Schaller \cite{Schal1},\cite{Schal2}, \cite{Schal3}) in part due to the fact that it is a \textit{topological Morse function}  \cite{Akro}, a generalization of the notion of a classical Morse function to functions which are not necessarily smooth, and for which versions of the Morse inequalities also hold. Thus a careful analysis of the critical points of $\mathcal{R}_{g}$ can yield naturally arising cellular decompositions of $\mathcal{M}(S_{g})$. 

Motivated by this program, we initiate the study of another topological Morse function $\mathcal{F}_{g}$, the \textit{filling pair} function, which outputs the length of the shortest minimally intersecting filling pair with respect to a metric $\sigma \in \mathcal{M}(S_{g})$. Here, the length of a pair of curves is simply the sum of the individual lengths. We show:

\begin{theorem} \label{thm:3}
$\mathcal{F}_{g}$ is proper and a topological Morse function. For any $\sigma \in \mathcal{M}_{g}$, 
\[ \mathcal{F}_{g}(\sigma)\geq \frac{m_{g}}{2},\]
where 
\[m_{g}= (8g-4)\cdot \cosh^{-1}\left( 2 \left[\cos\left(\frac{2\pi}{8g-4}\right)+\frac{1}{2}\right] \right)  \]
denotes the perimeter of a regular, right-angled $(8g-4)$-gon. Furthermore, define 
\[ \mathcal{B}_{g}:= \left\{\sigma \in \mathcal{M}_{g} : \mathcal{F}_{g}(\sigma) = \frac{m_{g}}{2}\right\};\]
then $\mathcal{B}_{g}$ is finite and grows at least exponentially in $g$. If $\sigma\in \mathcal{B}_{g}$, the injectivity radius of $\sigma$  is at least  
\[\frac{1}{2} \cosh^{-1}\left(\frac{9}{\sqrt{73}}\right) \approx .3253 .\]
\end{theorem}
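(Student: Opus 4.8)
I would establish the four assertions in turn, relying throughout on the combinatorial picture of minimally intersecting filling pairs developed in Section~2.

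\textbf{Properness and the Morse property.} First, properness: if $\mathrm{sys}(\sigma)$ is small, the shortest geodesic $\gamma$ carries a wide embedded collar, so every essential simple closed curve meeting $\gamma$ is long; since the two curves of any filling pair cannot both avoid $\gamma$ (else $\gamma$ would sit inside a complementary disk), the total length of every filling pair exceeds the collar width, and hence $\mathcal{F}_g(\sigma)\to\infty$ as $\mathrm{sys}(\sigma)\to 0$. By Mumford compactness, $\mathcal{F}_g$ is proper. Lifting to Teichm\"uller space, $\mathcal{F}_g$ is the minimum of the real-analytic functions $\ell(\alpha)+\ell(\beta)$ over all minimally intersecting filling pairs $(\alpha,\beta)$; properness makes this family locally finite, so near any point $\mathcal{F}_g$ is the minimum of finitely many analytic functions, and the same holds after descending to $\mathcal{M}_g$. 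I would then invoke Akrout's theorem \cite{Akro}, which reduces the topological Morse property to a non-degeneracy condition on the collection of length functions achieving the minimum at a given point; this is checked exactly as for the systole function, distinct filling pairs giving distinct length functions whose gradients are in general position.

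\textbf{The lower bound and the shape of $\mathcal{B}_g$.} Fix $\sigma$, pick a minimally intersecting filling pair $(\alpha,\beta)$ realizing $\mathcal{F}_g(\sigma)$ (the infimum is attained, there being finitely many $\mathrm{Mod}(S_g)$-orbits), and take geodesic representatives. By Section~2, $i(\alpha,\beta)=2g-1$ when $g\neq 2$, so an Euler characteristic count gives that $S_g\setminus(\alpha\cup\beta)$ is a single $(8g-4)$-gon $Q$; each of the $4g-2$ geodesic sub-arcs of $\alpha\cup\beta$ is traversed twice by $\partial Q$, so $\mathrm{perimeter}(Q)=2\bigl(\ell_\sigma(\alpha)+\ell_\sigma(\beta)\bigr)=2\mathcal{F}_g(\sigma)$, while the interior angles of $Q$ sum, around the $2g-1$ crossing points, to $(4g-2)\pi$ (equivalently, by Gauss--Bonnet, $\mathrm{Area}(Q)=(4g-4)\pi$). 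A symmetrization argument shows that a geodesic $(8g-4)$-gon with this angle sum has perimeter at least $m_g$, attained only by the regular right-angled one; hence $\mathcal{F}_g(\sigma)\geq m_g/2$ (the small case $g=2$ is exceptional and handled directly). Conversely, gluing the sides of a fixed regular right-angled $(8g-4)$-gon according to the combinatorics of any minimally intersecting filling pair produces a metric carrying a filling pair of length exactly $m_g/2$; so $\mathcal{B}_g$ is nonempty, and is precisely the set of metrics admitting such a polygonal decomposition.

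\textbf{Finiteness and exponential growth.} Finiteness of $\mathcal{B}_g$ follows from rigidity: the regular right-angled $(8g-4)$-gon being unique up to isometry, each $\sigma\in\mathcal{B}_g$ is determined by one of finitely many side-pairing patterns on a fixed polygon. For the lower bound on $|\mathcal{B}_g|$ I would introduce the map $\Phi$ sending a $\mathrm{Mod}(S_g)$-orbit of minimally intersecting filling pairs to the point of $\mathcal{B}_g$ obtained from the gluing above; by Theorem~\ref{thm:1} its domain has size at least $f(g)\sim 3^{g/2}/g^2$. It then suffices to bound the fibers of $\Phi$ subexponentially: a fiber over $\sigma$ injects into the set of length-$(m_g/2)$ minimally intersecting filling pairs on $\sigma$, each of which is orthogonal at every crossing and cuts $\sigma$ into an embedded copy of the regular polygon; counting these copies by a (vertex, direction) framing gives a bound polynomial in $g$. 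Dividing, $|\mathcal{B}_g|$ still grows at least exponentially in $g$.

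\textbf{Injectivity radius.} Since the injectivity radius of a closed hyperbolic surface equals half its systole, it is enough to bound below the length of a shortest closed geodesic $\gamma$ on $\sigma\in\mathcal{B}_g$. Such a $\gamma$ is not contained in the complementary disk $Q$, so it crosses $\alpha\cup\beta$; developing in $\mathbb{H}^2$ it runs through a chain of copies of the regular right-angled $(8g-4)$-gon, and minimality of $\ell(\gamma)$ restricts both the number of crossings and the combinatorial type of this chain to a short explicit list. In each case I would bound $\ell(\gamma)$ from below by hyperbolic trigonometry in the regular polygon, whose side $s$ satisfies $\cosh s = 2\bigl[\cos\bigl(2\pi/(8g-4)\bigr)+\tfrac12\bigr]$; the extremal configuration gives the stated constant, uniformly in $g$. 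I expect the main obstacles to be precisely this last case analysis and the fiber bound for $\Phi$; properness and the Morse property are routine, and $\mathcal{F}_g(\sigma)\geq m_g/2$ is a clean consequence of the single-polygon picture together with the isoperimetric inequality for hyperbolic polygons.
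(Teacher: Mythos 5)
Most of your outline tracks the paper's argument: properness via the collar lemma and Mumford compactness, the Morse property via Akrout, the lower bound by cutting along the pair to get a single $(8g-4)$-gon of area $2\pi(2g-2)$ and perimeter $2\mathcal{F}_g(\sigma)$ and then applying the hyperbolic isoperimetric inequality for polygons (the paper cites Bezdek for the fact that the regular $n$-gon minimizes perimeter at fixed area; your ``symmetrization argument'' is standing in for exactly this input), and the injectivity radius via a case analysis on how the arcs of a lift of a short geodesic cross the regular right-angled polygon. One small correction: Akrout's theorem applies to any ``generalized systole'' function once one verifies local finiteness of the family $\{\ell(\alpha)+\ell(\beta)\}$ below any level (which the paper does with the collar lemma); there is no further ``gradients in general position'' condition to check, and asserting one would be both unnecessary and hard to justify.

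The genuine gap is your bound on the fibers of $\Phi$. You claim that the length-$(m_g/2)$ minimally intersecting filling pairs on a fixed $\sigma\in\mathcal{B}_g$ can be counted ``by a (vertex, direction) framing,'' giving a polynomial bound. But there is no finite set of candidate base points to anchor such a framing: distinct minimal filling pairs on $\sigma$ need not share any intersection points, and a priori every point of $\sigma$ could serve as a vertex of some isometrically embedded regular right-angled $(8g-4)$-gon. This multiplicity bound is the real content of this part of the theorem, and the paper obtains it by a different mechanism: $r$ minimal pairs give $r$ distinct $\Gamma$-invariant tilings $T_1,\dots,T_r$ of $\mathbb{H}^2$ by regular right-angled $(8g-4)$-gons; the isometries carrying $T_1$ to $T_k$ lie in distinct cosets of $\Lambda_1=\mathrm{Isom}(T_1)$ inside $\mathrm{comm}(\Lambda_1)$, so $r\leq[\mathrm{comm}(\Lambda_1):\Lambda_1]$. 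Since $\Lambda_1$ is commensurable with the $(16g-8,8,4)$ triangle group, which is non-arithmetic for large $g$ (there are only finitely many arithmetic triangle groups), Margulis's criterion makes $\mathrm{comm}(\Lambda_1)$ discrete, and the $\pi/21$ lower bound on hyperbolic orbifold area gives $r\leq 42(2g-2)$. Without this (or some substitute), the exponential lower bound on $|\mathcal{B}_g|$ does not follow from Theorem~\ref{thm:1}. The injectivity radius case analysis is also left unexecuted in your proposal, but the strategy you describe is the correct one; the fiber bound is the step that needs a genuinely new idea.
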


\textbf{Organization of paper and Acknowledgements.} In section $2$, we prove Theorem \ref{thm:1}; in section $3$, we prove Theorem \ref{thm:2}; and in section $4$, we prove Theorem \ref{thm:3}.

The authors would like to thank Yair Minsky for carefully reading through a draft of this project and for his many helpful suggestions. The first author would also like to thank Dan Margalit, Chris Arettines, Subhojoy Gupta, Babak Modami, and Jean Sun for many enlightening conversations regarding this work.

\section{Counting Minimally Intersecting Filling Pairs}

In this section we prove Theorem \ref{thm:1}. We begin with some preliminary notions and notation. All curves will be simple and closed. Let $\sim$ denote the free homotopy relation. 

Let $\chi(S_{g})= 2-2g$ denote the standard Euler characteristic.

If $\alpha,\beta$ are two curves on $S_{g}$, we define the \textit{geometric intersection number} of the pair $(\alpha,\beta)$, denoted $i(\alpha,\beta)$, by 
\[ i(\alpha,\beta)= \min_{\alpha' \sim \alpha}|\alpha' \cap \beta|, \]
where the minimum is taken over all curves $\alpha'$ freely homotopic to $\alpha$. 

If $|\alpha \cap \beta|= i(\alpha, \beta)$, we say that $\alpha$ and $\beta$ are in \textit{minimal position}.

\begin{lemma}\label{lm:1}
Suppose $(\alpha,\beta)$ is a pair of curves which fill $S_{g}, g \geq 1$. Then 
\[ i(\alpha,\beta)\geq 2g-1. \]
\end{lemma}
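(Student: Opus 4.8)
The plan is to run an Euler characteristic count on the CW decomposition of $S_g$ induced by the pair. First I would place $\alpha$ and $\beta$ in minimal position, so that $|\alpha\cap\beta| = i(\alpha,\beta) =: n$, and record the elementary observation that $n \geq 1$: two disjoint simple closed curves cannot fill a closed surface of positive genus, since then $\alpha\cup\beta$ would be disconnected, and removing a disconnected $1$-complex from a closed surface always leaves at least one non-disk complementary region. (Equivalently, the filling hypothesis forces $\alpha\cup\beta$ to be connected.)

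Next I would regard $G := \alpha\cup\beta$ as an embedded graph in $S_g$. Each of the $n$ intersection points is a $4$-valent vertex, so $V = n$. Traversing $\alpha$ once, the $n$ vertices cut it into $n$ arcs, and likewise for $\beta$; hence $E = 2n$. Because $(\alpha,\beta)$ fills, every component of $S_g\setminus G$ is an open disk, so $G$ together with these $F$ faces forms a genuine CW structure on $S_g$, and Euler's formula gives
\[ V - E + F = \chi(S_g) = 2-2g, \qquad\text{i.e.}\qquad F = n - 2g + 2. \]
Finally, since $S_g$ is nonempty there is at least one complementary face, so $F \geq 1$, which rearranges to $n \geq 2g-1$, as desired.

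I do not expect a serious obstacle here; the proof is essentially this three-line count. The only points that merit a sentence of care are (i) the justification that the filling hypothesis forces $\alpha\cup\beta$ to be connected, so that Euler's formula applies directly rather than in a disconnected form, and (ii) confirming the edge count $E = 2n$ together with the fact that the complementary regions being disks is exactly what makes $G$ a legitimate CW $1$-skeleton of $S_g$. Minimal position is not logically required for the inequality itself, but invoking it keeps $G$ bigon-free and the bookkeeping transparent, and it is in any case the natural setup for the finer analysis (the $2g-1$ versus $4$ dichotomy at $g=2$, and the permutation-equation correspondence) carried out in the remainder of the section.
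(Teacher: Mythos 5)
Your proof is correct and is essentially the same as the paper's: both place the curves in minimal position, view $\alpha\cup\beta$ as the $4$-valent $1$-skeleton of a cellular decomposition with $V=n$, $E=2n$, and use $\chi(S_g)=2-2g$ together with $F\geq 1$ to get $n\geq 2g-1$. Your added remarks on connectivity of $\alpha\cup\beta$ and the CW-structure justification are fine refinements of the same argument, not a different route.
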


\begin{proof} Assume $\alpha,\beta$ are in minimal position. That $(\alpha,\beta)$ fills $S_{g}$ implies that the complement $S_{g}\setminus (\alpha\cup \beta)$ is a disjoint union $D$ of topological disks. Thus we can view the $4$-valent graph $\alpha \cup \beta$ as the $1$-skeleton of a cellular decomposition of $S_{g}$. 

The number of $0$-cells is $i(\alpha,\beta)$, and $4$-valency of $\alpha\cup \beta$ implies that the number of edges is $2i(\alpha,\beta)$. Therefore

\[ \chi(S_{g})= 2-2g= i(\alpha,\beta) - 2i(\alpha,\beta)+ |D| \]
\[ \Rightarrow i(\alpha,\beta)= 2g-2+|D|. \]

Then since $|D|\geq 1$, we obtain the desired result. 

\end{proof}

We remark that the proof of Lemma \ref{lm:1} implies that $i(\alpha,\beta)=2g-1$ exactly when $S_{g}\setminus (\alpha\cup\beta)$ consists of a single disk.

\subsection{Filling Permutations}

Given a minimally intersecting filling pair $(\alpha, \beta)$, choose an orientation for $\alpha$ and $\beta$. Label the sub-arcs of $\alpha$ from $\left\{\alpha_{1},...,\alpha_{2g-1}\right\}$, separated by the $2g-1$ intersection points in accordance with the chosen orientation for $\alpha$ (and after choosing an initial arc for $\alpha$), and similarly for $\beta_{1},...,\beta_{2g-1}$. Once labeled, we call such a pair an \textit{oriented filling pair}. A filling pair without the extra structure of an orientation is an \textit{ordinary filling pair}. 

The action of $\mbox{Mod}(S_{g})$ on ordinary filling pairs lifts naturally to an action on the set of oriented filling pairs. Given an oriented filling pair, we obtain an ordinary one by forgetting the orientation.

Then associated to the oriented pair is a permutation $\sigma^{(\alpha,\beta)}\in \Sigma_{8g-4}$, the permutation group on $8g-4$ symbols, which we define as follows.

We will identify $\Sigma_{8g-4}$ as the permutations of the ordered set 
\[ A(g)= \left\{\alpha_{1},\beta_{1},\alpha_{2},\beta_{2},...,\alpha_{2g-1},\beta_{2g-1}, \alpha_{1}^{-1},\beta_{1}^{-1},..., \alpha_{2g-1}^{-1},\beta_{2g-1}^{-1} \right\} .\]

Cutting along $\alpha\cup \beta$ produces an $(8g-4)$-gon $P(\alpha,\beta)$, whose edges correspond to elements of $A(g)$. Pick an initial edge of $P(\alpha, \beta)$ and orient it clockwise. Then $\sigma^{(\alpha,\beta)}$ is defined by 

\[ \sigma^{(\alpha,\beta)}(j):=k \Leftrightarrow \mbox{the $j^{th}$ element of $A(g)$ coincides with the $(k-1)^{st}$ edge of $P(g)$.  }\]

Note that $\sigma^{(\alpha,\beta)}$ must be an $(8g-4)$-cycle, and corresponds to the permutation of $A(g)$ obtained by rotating $P(\alpha,\beta)$ counter-clockwise by $2\pi/(8g-4)$. 

Let $Q_{g} \in \Sigma_{8g-4}$ denote the standard ordered $(8g-4)$-cycle, which in cycle notation is
\[ Q_{g} = (1,2,3,...,8g-4) .\]
Note also that $Q_{g}^{4g-2}$ sends an edge to itself, oriented in the opposite direction. Let $\tau_{g}\in \Sigma_{8g-4}$ denote the permutation 
\[ \tau_{g}= (1,3,5,...,4g-3)(2,4,6,...,4g-2)(8g-5,8g-7,...,4g-1)(8g-4,8g-6,...,4g), \]
which corresponds moving forward by one sub-arc in each of $\alpha,\beta$. 

In what follows, in order to simplify the relevant permutation equations, we will suppress the reference to $g$ and $(\alpha,\beta)$ when referring to a permutation (e.g., $Q_{g}$ will be written as $Q$ and $\sigma^{(\alpha,\beta)}$ as $\sigma$ ).  

We say that a permutation $\gamma$ \textit{respects parity}, or is \textit{parity respecting}, if 
\[i \equiv j (\mbox{mod}(2) ) \Leftrightarrow \gamma(i) \equiv \gamma(j) (\mbox{mod}(2)). \]

\begin{lemma} \label{lm:2}
Let $(\alpha,\beta)$ be a minimally intersecting filling pair on $S_{g}$. Then the permutation $\sigma= \sigma^{(\alpha,\beta)}$ satisfies the equation

\[   \sigma Q^{4g-2}\sigma= \tau \]

Conversely, if $\sigma\in \Sigma_{8g-4}$ is  a parity respecting $(8g-4)$-cycle which solves the above equation, then $\sigma$ defines a minimally intersecting filling pair on $S_{g}$. 
\end{lemma}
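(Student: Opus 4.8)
The plan is to recast the permutation equation as a statement about the combinatorics of the $(8g-4)$-gon obtained by cutting $S_g$ along $\alpha\cup\beta$, verify it locally one intersection point at a time for the forward direction, and then run the construction backwards for the converse, where the real work is checking that the resulting curves are genuinely simple.

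For the forward implication, recall (Lemma~\ref{lm:1} and the remark after it) that when a minimally intersecting filling pair has $i(\alpha,\beta)=2g-1$ — the situation the permutation formalism addresses — cutting $S_{g}$ along $\alpha\cup\beta$ yields a single $(8g-4)$-gon $P=P(\alpha,\beta)$. Its $8g-4$ directed boundary edges are exactly the $8g-4$ oriented sub-arcs, i.e.\ the elements of $A(g)$, each occurring once, and $\sigma$ records the cyclic order in which they appear, so that $\sigma$ sends each oriented sub-arc to the next one along $\partial P$. Observe that $Q^{4g-2}$ is the involution $x\mapsto x^{-1}$ reversing a sub-arc's orientation, and that $\tau$ sends each oriented sub-arc to the next one met while continuing to traverse its curve. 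The three permutations $\sigma$, $Q^{4g-2}$, $\tau$ are all \emph{local} at the intersection points: $\sigma$ and $\tau$ carry a dart (an oriented sub-arc germ) arriving at a vertex of $\alpha\cup\beta$ to a dart leaving the same vertex, while $Q^{4g-2}$ carries a dart leaving a vertex to a dart arriving at it. Hence $\sigma Q^{4g-2}\sigma$ and $\tau$ are each determined vertex by vertex, and it suffices to check the identity at a single intersection point $p$. There a strand of $\alpha$ crosses a strand of $\beta$ transversally, so the four darts at $p$ occur in one of exactly two alternating cyclic orders (the two signs of the crossing); in either case, tracing $\sigma Q^{4g-2}\sigma$ through the four darts reproduces $\tau$ — from the incoming $\alpha$-arc a $\sigma$-step reaches the $\beta$-edge across the crossing, $Q^{4g-2}$ passes to its glued copy, and a second $\sigma$-step lands on the outgoing $\alpha$-arc, which is precisely the continuation of $\alpha$; the other three darts are handled identically. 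One records in passing, for use in the converse, that $\sigma$ is an $(8g-4)$-cycle (as $P$ has a single boundary circle) and is parity respecting (the $\alpha$- and $\beta$-edges alternate around $\partial P$, every corner being bounded by one of each).

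For the converse one reverses this construction: given a parity respecting $(8g-4)$-cycle $\sigma$ with $\sigma Q^{4g-2}\sigma=\tau$, build an oriented $(8g-4)$-gon $P$ whose cyclic sequence of directed boundary edges over the alphabet $A(g)$ is the one prescribed by $\sigma$, glue each edge $x$ to the edge $x^{-1}$ by an orientation-reversing identification, let $\alpha$ (resp.\ $\beta$) be the image of the union of the $\alpha$- (resp.\ $\beta$-) sub-arcs, and let $S$ be the resulting closed oriented surface; the task is to show that $(\alpha,\beta)$ is a minimally intersecting filling pair on $S_{g}$. The gluing partitions the $8g-4$ corners of $P$ into vertex classes, and these classes are exactly the orbits of the permutation $\nu=Q^{4g-2}\sigma^{-1}$ describing how one moves from corner to corner around a vertex. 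The hypothesis $\sigma Q^{4g-2}\sigma=\tau$ forces $\nu^{2}=Q^{4g-2}\tau^{-1}$, and since $Q^{4g-2}\tau^{-1}Q^{4g-2}=\tau$ it follows formally that $\nu^{4}=\mathrm{id}$; hence every vertex class has $1$, $2$ or $4$ corners. A class of size $1$ would force an edge to be adjacent in $\partial P$ to its glued partner, which is impossible since those two edges have equal parity while $\sigma$ is parity respecting; a class of size $2$ is excluded by a one-line computation with the indices, valid for $g\ge 2$. So every vertex is $4$-valent, hence there are exactly $2g-1$ of them, and $\chi(S)=(2g-1)-(4g-2)+1=2-2g$ identifies $S$ with $S_{g}$. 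Being parity respecting makes the four darts at each vertex alternate $\alpha,\beta,\alpha,\beta$, so each vertex is an honest transverse crossing of a single $\alpha$-strand with a single $\beta$-strand; since $\tau$ restricted to the forward $\alpha$-arcs (resp.\ $\beta$-arcs) is a single $(2g-1)$-cycle, $\alpha$ and $\beta$ are each single simple closed curves. Finally $S_{g}\setminus(\alpha\cup\beta)$ is the interior of $P$, a disk, so $(\alpha,\beta)$ fills, and then $i(\alpha,\beta)=2g-1$ by Lemma~\ref{lm:1}, so the pair is minimally intersecting.

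I expect the main obstacle to be the converse, and within it the step deducing \emph{simplicity} of $\alpha$ and $\beta$ from the combinatorial data: this is exactly where the parity hypothesis is indispensable, both to rule out short vertex classes and to force the alternating $\alpha,\beta,\alpha,\beta$ pattern at each vertex. The genuinely delicate bookkeeping is fixing the orientation conventions consistently so that the corner-rotation permutation really is $\nu=Q^{4g-2}\sigma^{-1}$ (and similarly pinning down the exact normalization relating $\sigma$ to the cyclic order of $\partial P$); once that dictionary is in place, the identities $\nu^{2}=Q^{4g-2}\tau^{-1}$, $\nu^{4}=\mathrm{id}$ and the Euler-characteristic count follow purely formally.
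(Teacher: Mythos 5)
Your proposal is correct and follows essentially the same route as the paper: the forward direction is the same local verification that $\sigma$, $Q^{4g-2}$, and $\tau$ implement ``next edge around $P$'', ``inverse arc'', and ``next arc along the curve'', and the converse rebuilds the glued polygon, shows each vertex class has four corners, and concludes via the Euler characteristic. The only difference is cosmetic: where the paper chases the four corners around a vertex explicitly (Figure 1), you derive the same count algebraically from $\nu^{4}=\mathrm{id}$ together with the parity argument excluding orbits of size $1$ and $2$ --- a slightly cleaner packaging that also makes explicit a point the paper leaves implicit.
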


\begin{proof} Let $k\in \left\{1,...,8g-4\right\}$. Then $\sigma(k)$ is the labeling of the edge of $P(g)$ immediately following the edge labeled with the $k^{th}$ element of $A(g)$. The ordering of $A(g)$ has been chosen so that $Q^{4g-2}(j)$ is precisely the element of $A(g)$ identified with the inverse arc to $j$. Thus $\sigma   Q^{4g-2} \sigma $ sends $k$ to the next arc along whichever curve $k$ is apart of, with respect to the chosen orientation of that curve, which is equal to $\tau(k)$.

Conversely, let $\sigma$ satisfy the above permutation equation, and suppose further that $\sigma$ is an $(8g-4)$-cycle and is parity respecting. Then associated to $\sigma$ is a clockwise labeling of the edges of an $(8g-4)$-gon (up to cyclic permutation) from the set $\left\{1,...,8g-4\right\}$, such that edges alternate between receiving even and odd labels. This is simply the labeling obtained by choosing an initial edge of the polygon, and labeling edges in order from the initial edge in accordance with the permutation $\sigma$ written in cycle notation. 

This produces a labeling of the edges by the elements of $A(g)$, which we in turn interpret as a gluing pattern; each edge glues to its inverse. Thus we obtain a closed surface $S$, which is orientable because each edge is glued to one which is oppositely oriented. It remains to check that the Euler characteristic of the surface is $2-2g$, and that the boundary of the polygon projects to a minimally intersecting filling pair. 

 $\chi(S)=2-2g$ is equivalent to there being $2g-1$ equivalence classes of vertices under the gluing. Indeed, the permutation equation ensures that each equivalence class contains precisely $4$ vertices; as seen in Figure $1$, the vertex $v$ separating the edges labeled $\alpha_{k}$ and $\beta_{j+1}$ glues to the vertex separating the edges labeled $\beta_{j+1}^{-1}$ and $\alpha_{k+1}$. 

 \begin{figure}[H]
\centering
	\includegraphics[width=3.5in]{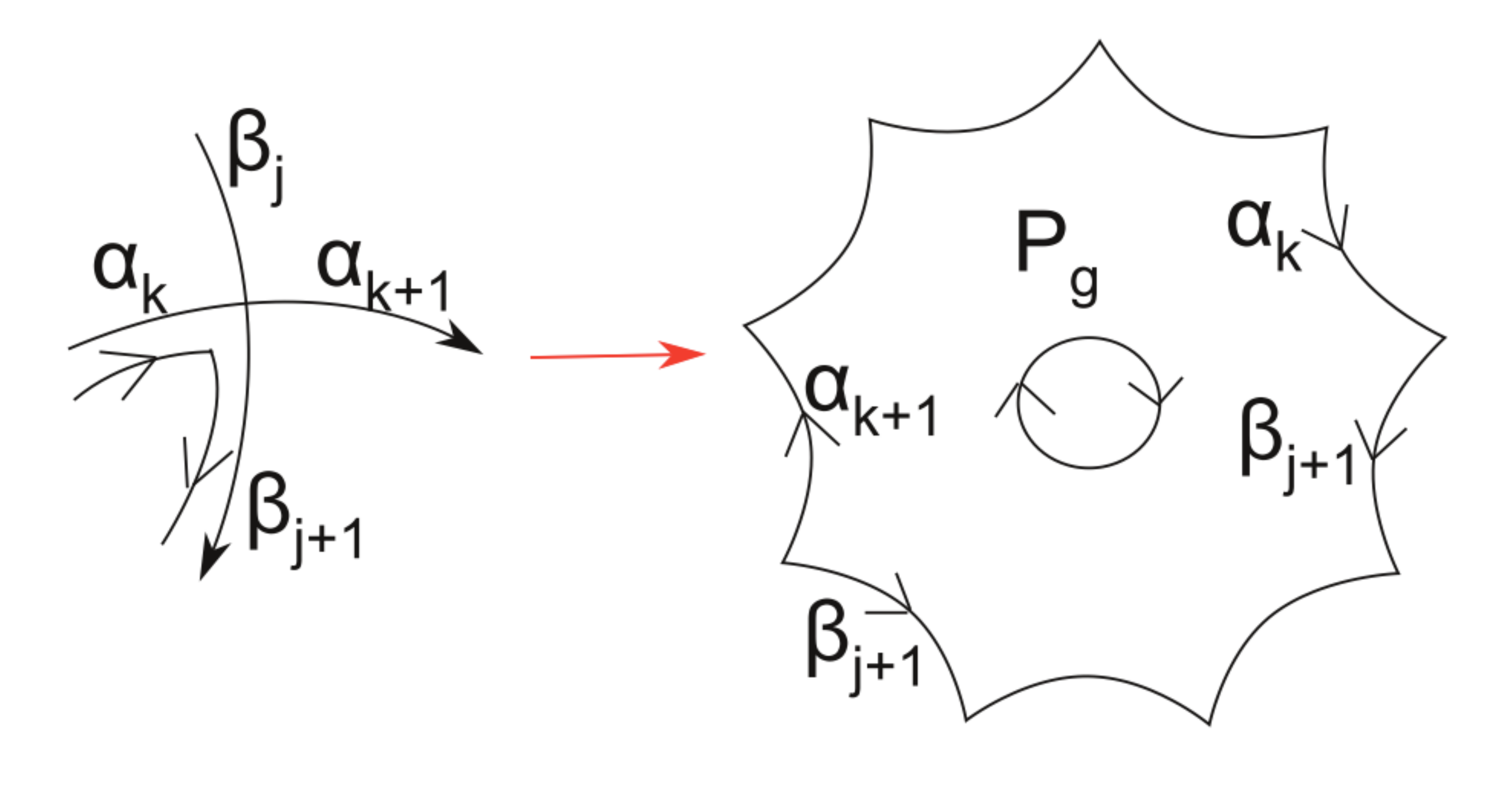}
\caption{ The permutation equation implies that if $\alpha_{k}$ and $\beta_{j+1}$ share a vertex in common along the boundary of the polygon $P(g)$, then $\beta_{j+1}^{-1}$ and $\alpha_{k+1}$ must also share a vertex in common, and these two vertices are glued together to obtain $S$. Iterating this argument shows that each gluing class contains $4$ vertices.}
\end{figure}

Similarly, the vertex separating $\beta_{j+1}^{-1}$ and $\alpha_{k+1}$ is glued to the vertex separating $\alpha_{k+1}^{-1}$ and $\beta_{j}^{-1}$, which in turn glues to the vertex separating $\beta_{j}$ and $\alpha_{k}^{-1}$. Finally, this fourth vertex glues to $v$ and the cycle is complete.

By the same analysis, the terminal vertex of $\alpha_{k}$ is identified to the initial vertex of $\alpha_{k+1}$, and therefore the $\alpha$-arcs are concatenated in the quotient to produce a single simple closed curve $\alpha$, and similarly for the $\beta$-arcs.  

\end{proof}

 We also note that if a permutation in $\Sigma_{8g-4}$ is an $(8g-4)$-cycle and parity respecting, it must send each even number to an odd number.

Henceforth, any permutation satisfying the requirements of Lemma \ref{lm:2} (being an $(8g-4)$-cycle, parity respecting, and satisfying the permutation equation) will be called a \textit{filling permutation}. 

Filling permutations parameterize oriented filling pairs. Therefore, in order to count ordinary filling pairs, it suffices to count filling permutations, and divide by the number of distinct orientations which a fixed ordinary filling pair admits. Lemma \ref{lm:3} characterizes which filling permutations correspond to the same ordinary filling pair:

\begin{lemma} \label{lm:3}
If $\Gamma_{1} =(\alpha_{1},\beta_{1}), \Gamma_{2}=(\alpha_{2},\beta_{2})$ are two minimally intersecting filling pairs on $S_{g}$ in the same $\mbox{Mod}(S_{g})$-orbit, then $\sigma^{(\alpha_{1},\beta_{1})}=\sigma^{(\alpha_{2},\beta_{2})}$, modulo conjugation by permutations of the form 
\[\mu_{g}^{l}\kappa_{g}^{k}\delta_{g}^{j}\eta_{g}^{i}:  l,i \in \left\{0,1\right\}; j,k\in \left\{0,1,...,2g-2\right\}, \]
where \[\kappa_{g}=(1,3,5,...,4g-3)(4g-1,4g+1,...,8g-7,8g-5),\]   
\[ \delta_{g}= (2,4,6,...,4g-2)(4g,4g+2,...,8g-4),\]
\[\eta_{g}= (1,4g-1)(3,4g+1)...(4g-3,8g-5), \mu_{g}= (1,2)(3,4)...(8g-5,8g-4). \]
\end{lemma}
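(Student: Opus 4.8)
The plan is to track exactly how much ambiguity there is in passing from an ordinary (unoriented, unlabeled) minimally intersecting filling pair to its filling permutation $\sigma^{(\alpha,\beta)}$. There are several independent choices involved in the construction: (i) which curve is called $\alpha$ and which is $\beta$; (ii) an orientation for $\alpha$ and an orientation for $\beta$; (iii) a choice of initial sub-arc $\alpha_1$ among the $2g-1$ sub-arcs of $\alpha$, and similarly a choice of initial sub-arc $\beta_1$; and (iv) a choice of initial edge of the polygon $P(\alpha,\beta)$ to begin the clockwise labeling. Choice (iv) only changes $\sigma^{(\alpha,\beta)}$ by a cyclic relabeling of $\{1,\dots,8g-4\}$, i.e. by conjugation by a power of $Q$; but since $\sigma$ is already an $(8g-4)$-cycle, this is subsumed once we pass to conjugacy, so the real content is choices (i)–(iii). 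First I would show that two filling permutations arising from the same $\mathrm{Mod}(S_g)$-orbit differ by relabeling the set $A(g)$ in a way compatible with its combinatorial structure, i.e. by a permutation of $A(g)$ that (a) commutes appropriately with the involution $Q^{4g-2}$ (edge reversal) and (b) respects the partition of $A(g)$ into $\alpha$-arcs, $\beta$-arcs, and their inverses up to the allowed symmetries; such relabelings act on $\sigma$ by conjugation. Then I would identify this group of admissible relabelings with the group generated by $\kappa_g, \delta_g, \eta_g, \mu_g$.

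The key steps, in order. Step 1: Observe that a homeomorphism realizing the $\mathrm{Mod}(S_g)$-equivalence $\Gamma_1 \sim \Gamma_2$ carries the cell structure $\alpha_1\cup\beta_1$ to $\alpha_2\cup\beta_2$, hence carries the polygon $P(\Gamma_1)$ to $P(\Gamma_2)$ preserving the gluing pattern; therefore $\sigma^{(\alpha_1,\beta_1)}$ and $\sigma^{(\alpha_2,\beta_2)}$ differ precisely by the change-of-labeling permutation on $A(g)$ induced by the difference in the chosen orientations and initial arcs, composed with a cyclic shift (power of $Q$) coming from the choice of initial polygon edge — and the power of $Q$ can be absorbed into conjugacy. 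Step 2: Enumerate the generators of the relabeling group. Advancing the initial sub-arc of $\alpha$ by one (keeping $\beta$ fixed) sends $\alpha_k \mapsto \alpha_{k+1}$ and $\alpha_k^{-1}\mapsto \alpha_{k+1}^{-1}$ cyclically while fixing all $\beta_j, \beta_j^{-1}$; translating this into the indexing of $A(g) = \{\alpha_1,\beta_1,\dots,\alpha_{2g-1},\beta_{2g-1},\alpha_1^{-1},\dots\}$ gives exactly $\kappa_g$ (it moves through the odd positions $1,3,\dots,4g-3$ among the non-inverse arcs and through $4g-1,4g+1,\dots,8g-5$ among the inverse arcs, with order $2g-1$). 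Step 3: Similarly, advancing the initial sub-arc of $\beta$ by one fixes all $\alpha$-symbols and cyclically permutes the $\beta$-symbols; reading off positions in $A(g)$ yields $\delta_g$, also of order $2g-1$. Step 4: Reversing the orientation of $\alpha$ swaps each $\alpha_k$ with $\alpha_k^{-1}$ (and reverses the cyclic order of the $\alpha$-arcs, which can be reconciled with Steps 2–3); recording the induced permutation of positions in $A(g)$ gives (a conjugate of) $\eta_g$, an involution. Step 5: Reversing orientation of both curves simultaneously — equivalently, the global edge-reversal combined with swapping the roles of $\alpha$ and $\beta$ — together with the choice (i) of which curve is $\alpha$, accounts for $\mu_g$, the involution $(1,2)(3,4)\cdots(8g-5,8g-4)$ that interchanges each $\alpha$-symbol with the adjacent $\beta$-symbol. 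Step 6: Conclude that the group of admissible relabelings is generated by $\{\kappa_g^{\pm1}, \delta_g^{\pm1}, \eta_g, \mu_g\}$ and that every element can be written in the normal form $\mu_g^l \kappa_g^k \delta_g^j \eta_g^i$ with $l,i\in\{0,1\}$, $j,k\in\{0,1,\dots,2g-2\}$ — using the commutation relations among these four permutations (e.g. $\kappa_g$ and $\delta_g$ commute since they act on disjoint symbols; $\mu_g$ conjugates $\kappa_g$ to $\delta_g$ and $\eta_g$ to itself; $\eta_g$ conjugates $\kappa_g$ to $\kappa_g^{-1}$ up to $\delta$-type corrections). Finally, conjugation of $\sigma$ by any such relabeling produces the filling permutation of an oriented representative of the same orbit, which gives the claimed statement.

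The main obstacle I anticipate is Step 6 together with the bookkeeping in Steps 2–5: one must verify that the four listed permutations genuinely generate the full group of label-changes coming from the choices (i)–(iii), that no further relations collapse the count below what the normal form $\mu_g^l\kappa_g^k\delta_g^j\eta_g^i$ suggests (so that the quotient is computed correctly, which matters for the upper bound in Theorem \ref{thm:1}), and — most delicately — that the cyclic-shift ambiguity from choosing the initial polygon edge really is absorbed by passing to conjugacy and does not interact with the parity-respecting condition in a way that introduces extra identifications. Concretely, the subtle point is that $\kappa_g$ and $\delta_g$ as written move only the odd-indexed, respectively even-indexed, positions, so one must check these are consistent with the convention that a parity-respecting $(8g-4)$-cycle sends evens to odds, and that reversing an orientation (which a priori could break parity) is correctly compensated by a simultaneous cyclic shift. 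Once the dictionary between geometric choices and these four explicit permutations is pinned down, the verification of the commutation relations and the normal form is routine group theory in $\Sigma_{8g-4}$.
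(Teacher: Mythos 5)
Your outline is essentially the paper's: a homeomorphism realizing the $\mbox{Mod}(S_{g})$-equivalence carries the cell structure $\alpha_{1}\cup\beta_{1}$ to $\alpha_{2}\cup\beta_{2}$, so the two filling permutations can differ only through the labeling choices (which curve is called $\alpha$, the two orientations, the two initial arcs), and each change of these choices conjugates $\sigma$ by a twisting permutation; your Steps 2--5 are the bookkeeping that the paper leaves largely implicit.

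The one step that would fail as written is your handling of choice (iv). You assert that changing the initial edge of $P(\alpha,\beta)$ alters $\sigma$ by conjugation by a power of $Q$ and that this is ``subsumed once we pass to conjugacy.'' But the lemma is not a statement about conjugacy classes: it permits conjugation only by the permutations $\mu_{g}^{l}\kappa_{g}^{k}\delta_{g}^{j}\eta_{g}^{i}$, and nontrivial powers of $Q$ are not of this form, nor even in the group these elements generate (conjugation by $Q^{2}$ relabels $\alpha_{2g-1}$ as $\alpha_{1}^{-1}$ while relabeling $\alpha_{1}$ as $\alpha_{2}$, i.e.\ it moves exactly one $\alpha$-symbol into the inverse block, whereas every product $\mu_{g}^{l}\kappa_{g}^{k}\delta_{g}^{j}\eta_{g}^{i}$ sends the $\alpha$-symbols as a whole into a single block). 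The correct resolution, which is how the paper argues, is that this ambiguity is vacuous: $\sigma$ is the ``next edge'' permutation of the $A(g)$-labeled polygon, and the induced simplicial, orientation-preserving self-map of the regular $(8g-4)$-gon is a rotation, which merely cyclically permutes the cycle notation and leaves the permutation itself unchanged, so no conjugation by $Q$ ever enters. Two smaller repairs for Steps 4--6: the parenthetical ``(a conjugate of) $\eta_{g}$'' must be pinned down to an element of the listed set (reversing $\alpha$ also reverses the cyclic order of the $\alpha$-indices, so you must verify, as the paper asserts, that the resulting relabeling is accounted for within $\mu_{g}^{l}\kappa_{g}^{k}\delta_{g}^{j}\eta_{g}^{i}$, possibly after adjusting the initial arcs); and the relation you invoke, that $\eta_{g}$ conjugates $\kappa_{g}$ to $\kappa_{g}^{-1}$ up to $\delta$-type corrections, is false --- $\eta_{g}$ and $\kappa_{g}$ commute, which in fact simplifies your normal-form discussion.
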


\begin{proof} If $\Gamma_{1}$ and $\Gamma_{2}$ are \textit{oriented} filling pairs in the same $\mbox{Mod}(S_{g})$ orbit, there is a homeomorphism of $S_{g}$ taking $\Gamma_{1}$ to $\Gamma_{2}$, which lifts to a homeomorphism of the $(8g-4)$-gon that is simplicial in the sense that it fixes the boundary and sends edges to edges and vertices to vertices. 

The only option for such a map is a dihedral symmetry of the regular Euclidean $(8g-4)$-gon; since the map must also be orientation preserving, this leaves rotation as the only option. Thus the filling permutation for $\Gamma_{2}$, written in cycle notation, is obtained from that of $\Gamma_{1}$ by a cyclic rotation, and thus determines the same underlying permutation of $\Sigma_{8g-4}$. 

However, in general there will exist oriented filling pairs which are not in the same $\mbox{Mod}(S_{g})$ orbit, but whose underlying ordinary filling pairs are. Nevertheless, the fiber of the map from oriented pairs to ordinary pairs is in $1-1$ correspondence with the set of distinct orientations with which one can equip an ordinary pair. Fixing an ordinary filling pair, any choice of orientation can be obtained from any other by simply choosing a different initial arc along $\alpha$ (corresponding to conjugating by a power of $\kappa_{g}$) or $\beta$ (conjugating by $\delta_{g}$), reversing the direction of $\alpha$ (conjugating by $\eta_{g}$), and interchanging $\alpha$ and $\beta$ (conjugating by $\mu_{g}$).

\end{proof}

We call permutations of the form $\mu_{g}^{l}\kappa_{g}^{k}\delta_{g}^{j}\eta_{g}^{i}$ \textit{twisting} permutations.

In light of Lemma \ref{lm:3}, to obtain a lower bound on $N(g)$, it suffices to find a lower bound on the set of  filling permutations, and divide by an upper bound on the number of twisting permutations. Since $\kappa_{g}$ and $\delta_{g}$ both have order $2g-1$ and $\eta_{g},\mu_{g}$ are involutions, there are at most $4\cdot (2g-1)^{2}$ such permutations.

\subsection{Proof of the Lower Bound} 

Let $(\alpha,\beta)$ be an oriented minimally intersecting filling pair on $S_{g}$, and label the intersections $v_{1},...,v_{2g-1}$ in accordance with the chosen direction along $\alpha$. We first describe a way of ``extending'' $(\alpha,\beta)$ to a minimally intersecting pair on $S_{g+2}$ (see Remark \ref{rk:1} for an explanation of why we do not extend to $S_{g+1}$). 

Fix some $k, 1\leq k \leq 2g-1$. To create a minimally intersecting filling pair on $S_{g+2}$, we first excise a small disk around $v_{k}$, yielding $S_{g,1}$. Doing so turns $\alpha$ and $\beta$ into arcs $\tilde{\alpha}, \tilde{\beta}$ which intersect $2g-2$ times and which fill $S_{g,1}$. To complete the construction, we will glue a decorated copy $Z$ of $S_{2,1}$ to $S_{g,1}$ along its boundary component; by ``decorated'', we mean that $Z$ will come equipped with a special pair of oriented arcs $(a,b)$, such that if we concatenate $\tilde{\alpha}$ with $a$, and $\tilde{\beta}$ with $b$, we obtain a minimally intersecting filling pair on $S_{g+2}$. 

We now describe the arcs $(a,b)$. Consider the $3$-component multicurve $m$ on $S_{2,0}$ pictured in red in Figure $2$ below. The curve $\mbox{w}$ intersects each component of $m$; let $T_{m}(\mbox{w})$ denote the right Dehn twist of $\mbox{w}$ around $m$. 

\begin{lemma} \label{lm:4}
$(T_{m}(\mbox{w}),w)$ fills $S_{2,0}$, and the complement of the pair consists of $4$ disks. Furthermore, when $(T_{m}(\mbox{w}),\mbox{w} )$ are drawn in minimal position, there exists an intersection point which is on the boundary of all of these complementary disks. 
\end{lemma}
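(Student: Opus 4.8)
The plan is to verify the three assertions about the pair $(T_{m}(\mathrm{w}),\mathrm{w})$ on $S_{2,0}$ by an explicit analysis of the curves drawn in minimal position, using the standard fact that if $\mathrm{w}$ intersects each component of a multicurve $m$ efficiently, then $T_{m}(\mathrm{w})$ and $\mathrm{w}$ can be put in minimal position, and the intersection number is computed by the usual twist formula. First I would fix a concrete picture: take $S_{2,0}$ as a genus-$2$ surface, $\mathrm{w}$ a nonseparating curve, and $m=m_{1}\cup m_{2}\cup m_{3}$ the three red curves of Figure $2$, each meeting $\mathrm{w}$ in the minimal number of points dictated by the figure. Performing the right Dehn twist, I would draw $T_{m}(\mathrm{w})$ explicitly by the ``cut-and-reglue along $m$'' recipe, so that $i(T_{m}(\mathrm{w}),\mathrm{w})$ can be read off directly; a count of the resulting $4$-valent graph $T_{m}(\mathrm{w})\cup \mathrm{w}$ then lets me apply the Euler characteristic computation exactly as in the proof of Lemma \ref{lm:1}: with $V$ vertices, $2V$ edges, and $F$ complementary faces, $\chi(S_{2})=-2=V-2V+F$, so $F=V-2$. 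Thus showing that the complement is $4$ disks amounts to showing that the pair fills (no complementary region is an annulus or has higher genus) and that $V=6$, i.e. $i(T_{m}(\mathrm{w}),\mathrm{w})=6$; both are finite checks on the explicit picture.

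Next I would establish that the pair actually fills. The cleanest route is to observe that $m\cup\mathrm{w}$ already fills $S_{2,0}$ — which is immediate from Figure $2$ — and then argue that replacing $\mathrm{w}$ by $T_{m}(\mathrm{w})$ does not create an essential complementary subsurface: any component of $S_{2}\setminus(T_{m}(\mathrm{w})\cup\mathrm{w})$ lifts, after undoing the twist, to a component of $S_{2}\setminus(\text{parallel copies of }m\cup\mathrm{w})$, and since $m\cup \mathrm{w}$ fills, all such components are disks. Combined with $F=V-2$ and the count $V=6$, this gives exactly $4$ disks. Alternatively, if the explicit picture is more convenient, I would simply exhibit the $4$ complementary disks directly and check each is simply connected.

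For the final and most delicate assertion — the existence of an intersection point lying on the boundary of all four complementary disks — I would work directly with the explicit minimal-position picture of $T_{m}(\mathrm{w})\cup \mathrm{w}$. Around each of the $6$ vertices there are $4$ corners, each belonging to one complementary disk; the claim is that at (at least) one vertex the four corners belong to four \emph{distinct} disks, and that together these exhaust all four disks. I would identify the complementary disks by tracing their boundary edge-paths in the $4$-valent graph, label which disk sits in each of the $4$ corners at each of the $6$ vertices, and point to the vertex where all four labels differ. This is the step I expect to be the main obstacle: it is not forced by Euler characteristic or by the filling property alone, but is a genuine feature of this particular configuration, so the argument has to engage with the specific geometry of Figure $2$ rather than with general principles. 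I would present it as a direct verification on the figure, possibly packaging the edge-tracing into a small table of corner labels, and note that this special vertex is precisely the point that will be excised in the gluing construction of the following subsection.
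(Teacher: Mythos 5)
Your proposal is correct and is essentially the paper's argument: the paper also proceeds by pure constructive verification on the explicit picture, counting the $6$ intersection points and cutting along $T_{m}(\mbox{w})\cup\mbox{w}$ to exhibit the four complementary polygons, and the ``delicate'' final step is handled exactly as you anticipate, by checking that one vertex from each of the four polygons (the green vertices of Figure $4$) is identified to a single point under the gluing. Your corner-labelling table is just the uncut version of that same check, and your Euler-characteristic bookkeeping is a harmless addition.
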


\begin{figure}[H]
\centering
	\includegraphics[width=3.5in]{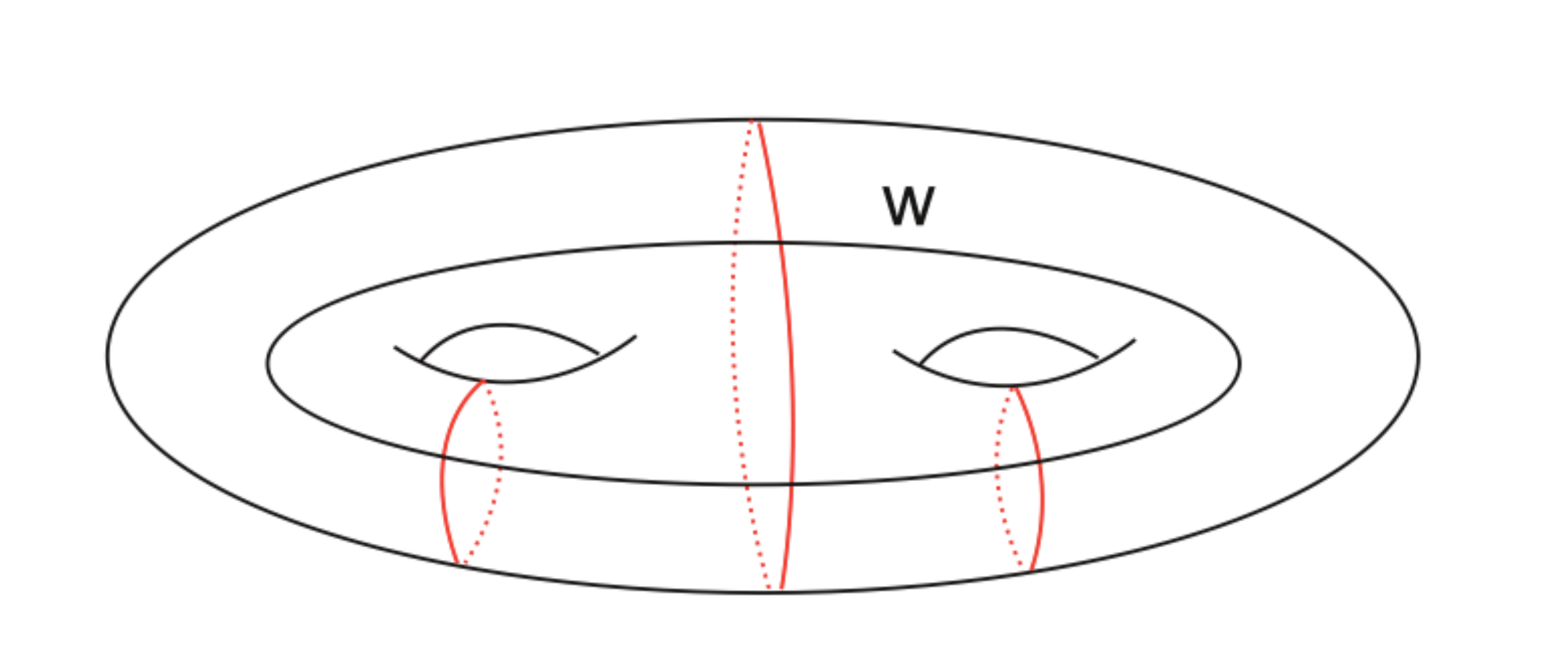}
\caption{The simple closed curve $\mbox{w}$ intersects each component of the red pants decomposition.}
\end{figure}

\begin{proof} The proof is completely constructive. $T_{m}(\mbox{w})$ and $\mbox{w}$ are pictured in Figure $3$ below ($T_{m}(\mbox{w})$ in red, $\mbox{w}$ in black). The representatives drawn in Figure $3$ intersect $6$ times; thus both curves are subdivided into $6$ arcs: $\left\{x_{1},...,x_{6}\right\}$ for $\mbox{w}$ and $\left\{y_{1},...,y_{6}\right\}$ for $T_{m}(\mbox{w})$, ordered cyclically with respect to some chosen orientation of $T_{m}(\mbox{w})$ (resp. $\mbox{w}$).

To examine the complement $S_{2,0}\setminus (T_{m}(\mbox{w})\cup \mbox{w})$, simply cut along the union of these two curves to obtain the $4$ labeled polygons in Figure $4$. $S_{2,0}$ is obtained by gluing these $4$ polygons back together along the oriented edge labelings. The four green vertices in Figure $4$ are all identified together in this gluing. 

\end{proof}

To complete the construction, we puncture $S_{2,0}$ at this green point by removing a small disk centered there to obtain $Z$, a copy of $S_{2,1}$; define the arc $a$ to be what remains of $\mbox{w}$, and what remains of $T_{m}(\mbox{w})$ is defined to be the second arc $b$. We then glue $Z$ to $S_{g,1}$ along the boundary of the disk we removed centered at the intersection point $v_{k}$, matching the endpoints of $a$ with those of $\tilde{\alpha}$ and the endpoints of $b$ with $\partial \tilde{\beta}$, obtaining two simple closed curves $(\alpha_{k}, \beta_{k})$ on $S_{g+2,0}$. Here, the subscript $k$ denotes the fact that we obtained this pair of curves by excising a disk around the intersection point $v_{k}$.

\begin{figure}[H]
\centering
	\includegraphics[width=3.5in]{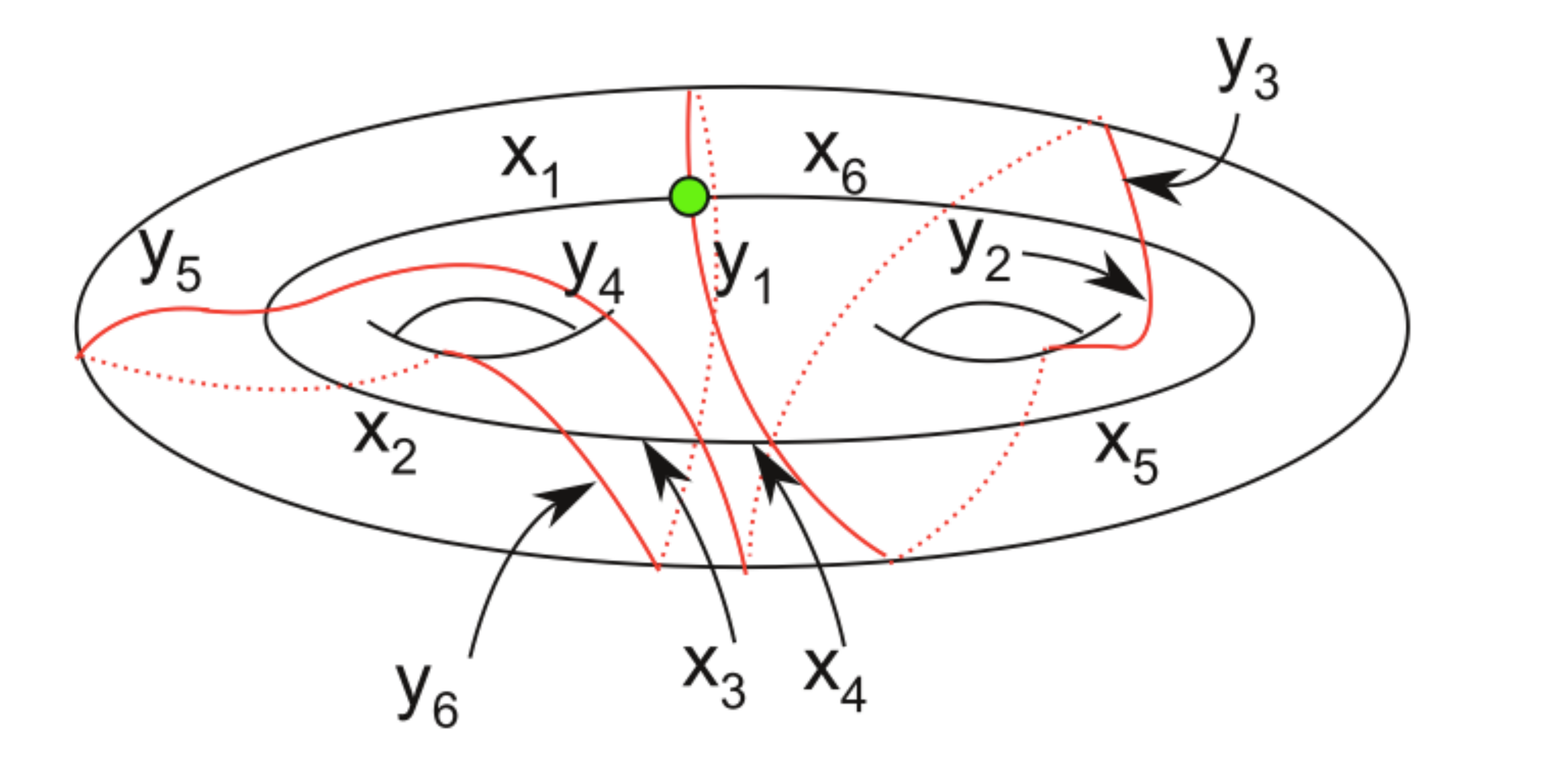}
\caption{$\mbox{w}$ in black and $T_{m}(\mbox{w})$ in red. The green point is the desired intersection point which is on the boundary of all $4$ complementary regions.}
\end{figure}

We claim that $(\alpha_{k}, \beta_{k})$ fill $S_{g+2,0}$. Assuming this, they must intersect minimally because by construction, $i(\alpha_{k},\beta_{k})\leq 2g+3= 2(g+2)-1$. Indeed, because the green point of intersection was on the boundary of all four complementary regions of $\mbox{w} \cup T_{m}(\mbox{w})$, the number of complementary regions of $S_{g+2} \setminus (\alpha_{k} \cup \beta_{k})$ is equal to the number of complementary regions of $S_{g} \setminus (\alpha \cup \beta)$. Therefore $S_{g+2}\setminus (\alpha_{k} \cup \beta_{k})$ is connected, and by the same basic Euler characteristic argument used in the proof of Lemma \ref{lm:1} the complement of $\alpha_{k}\cup \beta_{k}$ must be simply-connected. 

\begin{figure}[H]
\centering
	\includegraphics[width=3in]{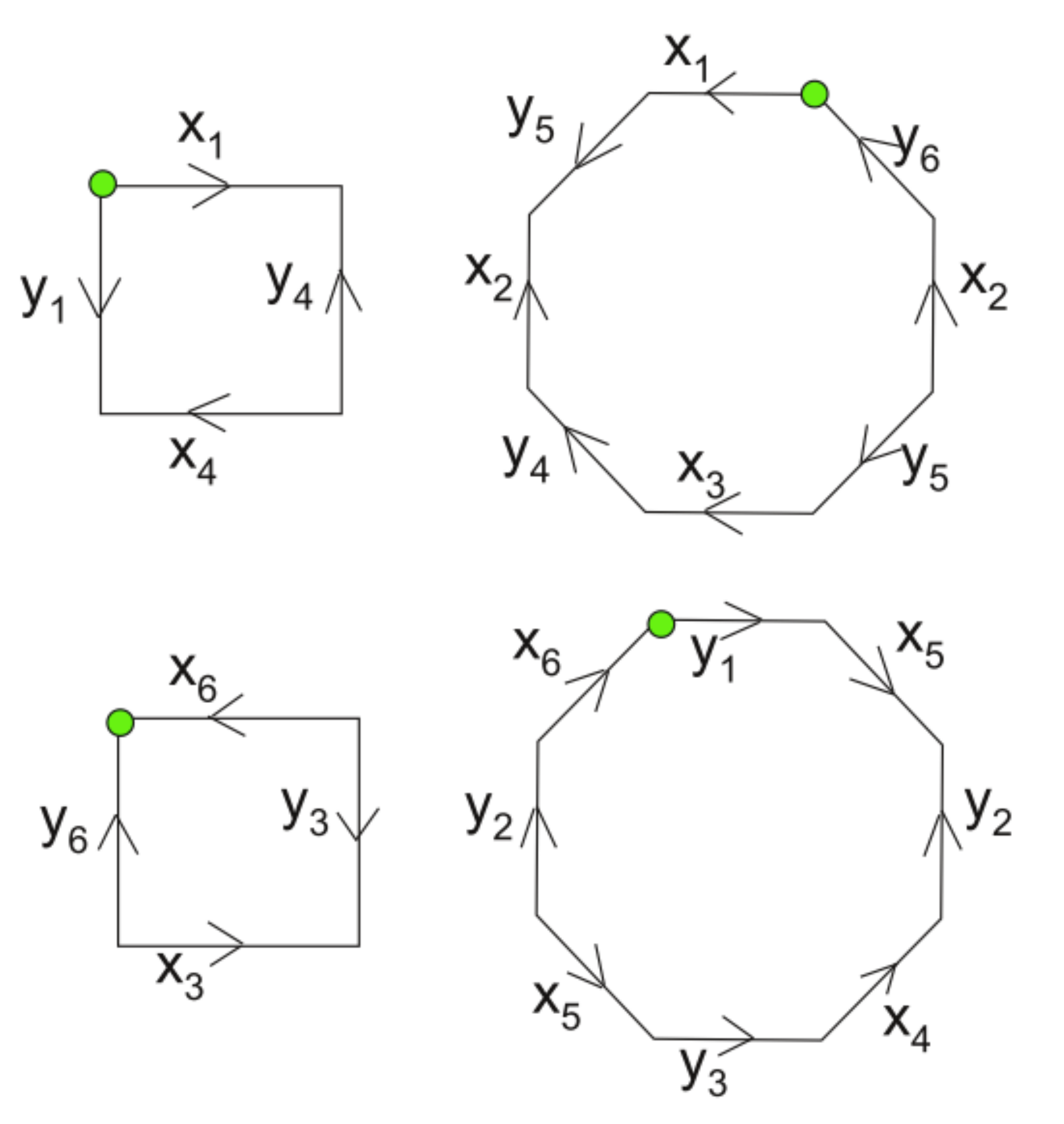}
\caption{Cutting along $T_{m}(\mbox{w})\cup \mbox{w}$ produces four complementary simply connected regions. When glued back up according to the oriented edge labelings, the four green vertices are identified.}
\end{figure}

\begin{remark} \label{rk:1}
It is unclear how to attempt such a construction by extending a minimally intersecting filling pair on $S_{g}$ to one on $S_{g+1}$. Our construction works by finding a pair of simple closed curves on $S_{2}$ intersecting $6$ times, and such that any other simple closed curve intersects their union at least twice. The key point here is that the arcs on $S_{2,1}$ fill in the \textit{arc and curve sense}, meaning that no essential arc or simple closed curve is disjoint from their union.

To extend a minimally intersecting filling pair on $S_{g}$ to one on $S_{g+1}$ in this fashion, we would need a pair of simple closed curves on $S_{1}$ intersecting $4$ times such that any other simple closed curve intersects their union at least twice. One way of guaranteeing this is to choose a pair of curves that are distance $3$ in the Farey graph. However, if a curve is distance $3$ or more from the $(1,0)$ curve, it must intersect $(1,0)$ at least $5$ times; since $SL(2,\mathbb{Z})$ acts transitively on the vertex set of the Farey graph, the same is true for any vertex.

\end{remark}

It remains to show that by choosing different intersection points $v_{j}$ to excise and glue a copy of $Z$ along the resulting boundary component, and by choosing different initial filling pairs on $S_{g}$ and carrying out this construction, we obtain distinct filling permutations. 

After gluing a copy of $Z$ to $S_{g}$ along a minimally intersecting filling pair $(\alpha,\beta)$ in the above fashion, we obtain a filling pair $(\alpha',\beta')$ on $S_{g+2}$ which contains a copy of the arcs $a,b$ as sub-arcs of $\alpha'$ and $\beta'$, respectively. 

 That is, there exists an open sub-arc $a'$ of $\alpha'$ whose closure has boundary consisting of two intersection points of $\alpha' \cup \beta'$, and such that:

\begin{enumerate}

\item it is comprised of $6$ consecutive arcs $\alpha'_{k},...,\alpha'_{k+5}$, (since $a'$ is open, we define $\alpha'_{k}, \alpha'_{k+5}$ to be open, and the other $4$ arcs to be closed); 
\item there is a sub-arc $b'$ of $\beta'$, comprised of $6$ consecutive $\beta'$-arcs, such that $|a'\cap b'|= 5$, and the combinatorics of these intersections completely coincide with those displayed between the arcs $a$ and $b$ in Figure $3$. 

\end{enumerate}

That is to say, the $j^{th}$ intersection point counted along $a'$ coincides with the $k^{th}$ intersection point along $b'$ if and only if this is the case for the arcs $a$ and $b$ on $Z$. We call $b'$ the \textit{companion arc} to $a'$. The union $a'\cup b'$ is called a \textit{$Z$-piece}. When convenient, we will alternate between thinking of a $Z$-piece as being a surface with boundary, or as being contained within a particular filling pair by identifying it with the arcs $a' \cup b'$. 

\begin{remark} \label{rk:2}
Given a $Z$-piece within a filling pair $(\alpha,\beta)$ with $\alpha$ and $\beta$ arcs labeled $x_{1},x_{2},...,x_{6}$ and $y_{1},...,y_{6}$, respectively as in Figure $3$, note that none of the following $4$ points can coincide on the surface:

\begin{enumerate}
\item the initial point of $x_{1}$;
\item the initial point of $y_{1}$;
\item the terminal point of $y_{6}$;
\item the terminal point of $x_{6}$;
\end{enumerate}

This observation is trivial, but it will play an important role in the proof of Lemma \ref{lm:6}.

\end{remark}

Note that there exists a filling pair $(\alpha(1),\beta(1))$ on $S_{1}$ intersecting $2g-1=1$ times, and at the end of this section we will demonstrate a filling pair $(\alpha(4),\beta(4))$ on $S_{4}$ intersecting $7$ times. If $(\alpha,\beta)$ on $S_{g}$ is a minimally intersecting filling pair obtained from one of these two``seed'' pairs by successively gluing on copies of $Z$, whether $(\alpha,\beta)$ is obtained from $(\alpha(1),\beta(1))$ or $(\alpha(4),\beta(4))$ in this fashion is completely determined by the parity of $g$. In the case that $g>4$ is even, we call $(\alpha,\beta)$ a $\frac{g-4}{2}$-\textit{child} of the \textit{ancestor} $(\alpha(4),\beta(4))$, and similarly if $g \geq 3$ is odd. 

The purpose of the next lemma is to show that $Z$-pieces are symmetric with respect to interchanging the roles of the arcs $a$ and $b$ in Figure $3$:

\begin{lemma} \label{lm:5}
There exists a homeomorphism of a $Z$-piece interchanging the arcs $a$ and $b$ and preserving the labeling, in the sense that sub-arc $x_{k}, 1\leq k \leq 6$ along $a$ maps to sub-arc $y_{k}$ along $b$. 
\end{lemma}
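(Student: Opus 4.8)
The plan is to exhibit the desired homeomorphism explicitly using the combinatorial data already assembled in the proof of Lemma~\ref{lm:4}, namely the four labeled polygons of Figure~4 together with the green vertex at which the puncture of $Z$ is placed. First I would observe that it suffices to produce a homeomorphism $\phi$ of the \emph{closed} surface $S_{2,0}$ carrying $\mbox{w}$ to $T_{m}(\mbox{w})$ and $T_{m}(\mbox{w})$ to $\mbox{w}$, which sends $x_{k}$ to $y_{k}$ for each $k$ and which fixes the green point; since $Z$ is obtained from $S_{2,0}$ by removing a small disk centered at the green point and the arcs $a,b$ are exactly what remains of $\mbox{w}, T_{m}(\mbox{w})$, such a $\phi$ restricts to the $Z$-piece homeomorphism we want. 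Note we do not need $\phi$ to be orientation preserving; a reflection is allowed here, since Lemma~\ref{lm:5} is only used to compare $Z$-pieces as abstract decorated surfaces, not within a fixed orientation.

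Next I would build $\phi$ at the level of the cut-open picture. Cutting $S_{2,0}$ along $\mbox{w}\cup T_{m}(\mbox{w})$ produces the four polygons $P_{1},\dots,P_{4}$ of Figure~4, whose oriented edges are labeled by $x_{1}^{\pm 1},\dots,x_{6}^{\pm 1}$ and $y_{1}^{\pm 1},\dots,y_{6}^{\pm 1}$, and whose reassembly recovers $S_{2,0}$. The involution $x_{k}\leftrightarrow y_{k}$ (respecting inverses) induces a relabeling of all edges of all four polygons; the key claim to verify is that this relabeling permutes the set $\{P_{1},P_{2},P_{3},P_{4}\}$ and is compatible with the gluing pattern, i.e.\ it carries the edge-identification scheme of Figure~4 to itself. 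Granting this, one defines $\phi$ on each polygon to be the affine (or piecewise-affine) map realizing the corresponding polygon-to-polygon identification sending labeled edges to labeled edges; these pieces agree on the cut locus precisely because the relabeling respects the gluings, so they patch to a homeomorphism $\phi\colon S_{2,0}\to S_{2,0}$ with $\phi(x_{k})=y_{k}$ and $\phi(y_{k})=x_{k}$. Finally, since all four green vertices are identified to the single green point and the relabeling permutes these four corners among themselves, $\phi$ fixes the green point, so it descends to the required homeomorphism of the $Z$-piece.

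The main obstacle is the combinatorial bookkeeping in the middle step: one has to check, polygon by polygon and edge by edge against Figure~4, that the symmetry $x_{k}\leftrightarrow y_{k}$ really is a symmetry of the gluing data rather than merely a relabeling of names. This is not automatic — it reflects a genuine symmetry of the configuration $(\mbox{w}, T_{m}(\mbox{w}))$ on $S_{2}$, ultimately coming from the fact that $T_{m}(\mbox{w})$ and $\mbox{w}$ play interchangeable roles relative to the pants curves $m$ — and verifying it amounts to a finite but somewhat delicate case check. Once that symmetry of Figure~4 is confirmed, the construction of $\phi$ and the verification that it fixes the green point and swaps the labeled arcs are routine. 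An alternative, less computational route would be to identify the symmetry directly on $S_{2,0}$: exhibit an isometry (for a suitable hyperbolic or flat structure adapted to the picture) or an explicit mapping class that conjugates $\mbox{w}$ to $T_{m}(\mbox{w})$ and vice versa while fixing the chosen intersection point, but pinning such a map down rigorously seems to require essentially the same polygon analysis, so I would carry out the cut-open argument as the primary proof.
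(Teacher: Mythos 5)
Your proposal is correct and takes essentially the same route as the paper: the paper's proof of Lemma~\ref{lm:5} is precisely the observation that relabeling $x_{k}$ as $y_{k}$ and $y_{k}$ as $x_{k}$ in the polygonal decomposition of Figure~4 reproduces the same intersection/gluing combinatorics, which is exactly the ``key claim'' you isolate and propose to verify. The additional scaffolding you supply (assembling the homeomorphism polygon by polygon from the relabeling symmetry and noting that it fixes the green vertex class, hence descends to the punctured piece $Z$) just makes explicit what the paper leaves implicit.
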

\begin{proof} Consider the polygonal decomposition in Figure $4$; if, for each $k$, one relabels $x_{k}$ as $y'_{k}$, and $y_{k}$ as $x'_{k}$, then the concatenation $x'_{1}\ast x'_{2} \ast...\ast x'_{6}$ displays the exact intersection combinatorics with the arc $y'_{1} \ast y_{2} \ast...\ast y'_{6}$, as the oriented arc $a$ does with $b$.  

\end{proof}

Given $Z_{1}$ a $Z$-piece, define $Z_{1}^{(\alpha)}$ to be the interior acs of $\alpha$ contained in $Z_{1}$; that is, $Z_{1}^{(\alpha)}$ is comprised of the sub-arcs $\left\{x_{2},x_{3},x_{4},x_{5}\right\}$. Define $Z_{1}^{(\beta)}$ similarly.

If $Z_{1},Z_{2}$ are two $Z$-pieces in $(\alpha,\beta)$, let $(Z_{1},Z_{2})^{\cap}$ denote the set of arcs
\[ (Z_{1},Z_{2})^{\cap}= (Z_{1}^{(\alpha)}\cap Z_{2}^{(\alpha)}) \cup (Z_{1}^{(\beta)}\cap Z_{2}^{(\beta)}).\]

For $1\leq k \leq 6$, let $x_{k}(Z_{1})$ denote the terminal point of the sub-arc $x_{k}$ in $a$ in the $Z$-piece $Z_{1}$, and define $y_{k}(Z_{1})$ similarly. 

The purpose of the next lemma is to show that distinct $Z$-pieces can not share any $\alpha$-arcs. 

\begin{lemma}  \label{lm:6}
Let $Z_{1}, Z_{2} \subset \alpha\cup \beta$ be two $Z$-pieces on a $k$-child $(\alpha,\beta)$ for any $k\in \mathbb{N}$. Then 
\[ Z_{1}^{(\alpha)}\cap Z_{2}^{(\alpha)} \neq \emptyset \Rightarrow Z_{1}= Z_{2} \]
\end{lemma}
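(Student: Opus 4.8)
The plan is to argue by contradiction: suppose $Z_1 \neq Z_2$ but $Z_1^{(\alpha)} \cap Z_2^{(\alpha)} \neq \emptyset$, so the two $Z$-pieces share at least one of the interior $\alpha$-sub-arcs $x_2, x_3, x_4, x_5$. Since each $Z$-piece occupies six \emph{consecutive} $\alpha$-arcs $\alpha'_k, \dots, \alpha'_{k+5}$, the set of $\alpha$-arcs used by $Z_1$ and by $Z_2$ are each a block of six consecutive indices (cyclically) along $\alpha$; if the interiors of these blocks overlap but the blocks are not identical, then one block is shifted relative to the other by some amount $1 \le s \le 5$. I would fix notation so that $Z_2$ is the shift of $Z_1$ by $s$ arcs along $\alpha$, i.e. $x_j(Z_2) = x_{j+s}(Z_1)$ for the indices where both are defined (using the obvious cyclic/consecutive labeling), and then derive a contradiction with Remark~\ref{rk:2}.

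The key step is to use Remark~\ref{rk:2}: within any single $Z$-piece, the four points --- initial point of $x_1$, initial point of $y_1$, terminal point of $y_6$, terminal point of $x_6$ --- are four \emph{distinct} points on the surface. The idea is that if $Z_1$ and $Z_2$ overlap along their interior $\alpha$-arcs, then the rigid intersection combinatorics of a $Z$-piece (which exactly match Figure~3 by the definition of $Z$-piece) force one of these four distinguished points of $Z_1$ to coincide with one of the four distinguished points of the \emph{same} $Z$-piece $Z_1$ (or symmetrically for $Z_2$), via the constraint that the $\beta$-companion arcs of $Z_1$ and $Z_2$ must interact consistently with the shared $\alpha$-arcs. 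Concretely: the $5$ intersection points of $a' \cup b'$ inside a $Z$-piece determine, for each interior $\alpha$-arc $x_j$ of that piece, exactly which $\beta$-arc of the companion crosses it and in what order. If $Z_2$ is a shift of $Z_1$ by $s$ along $\alpha$, then along the overlapping $\alpha$-arcs the companion $\beta$-arcs of $Z_1$ and of $Z_2$ must be compatible; tracing the forced identifications of endpoints of $\beta$-arcs through the rotation-by-$2\pi/(8g-4)$ description of the polygon gluing (Lemma~\ref{lm:2}) pins down that an endpoint listed in Remark~\ref{rk:2} for $Z_1$ gets glued to another such endpoint for $Z_1$, contradicting the Remark. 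I would handle the finitely many shift values $s \in \{1,2,3,4,5\}$ by inspecting Figure~3's intersection pattern; several may be ruled out immediately because the companion $\beta$-arcs cannot be simultaneously consistent (a $\beta$-arc would have to belong to two different $Z$-pieces' companions with incompatible labels), and the remaining ones by the endpoint-collision argument above.

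The main obstacle I anticipate is making precise the claim that the combinatorics "completely coincide with those displayed between $a$ and $b$ in Figure~3" \emph{rigidly enough} to force the endpoint collision --- i.e., translating the picture in Figure~3 into a purely combinatorial statement about which $y_m$ crosses which $x_j$ and with which orientation, and then showing that a nontrivial cyclic shift of the $x$-block cannot support a second copy of this exact pattern without violating simplicity (a curve meeting itself) or violating Remark~\ref{rk:2}. A secondary subtlety is that the shift could in principle "wrap around" so that $Z_1$ and $Z_2$ together cover a large fraction of all $8g-4$ arcs; here I would invoke that $(\alpha,\beta)$ is a genuine $k$-child built by successively gluing copies of $Z$, so each $Z$-piece's $\alpha$-arcs sit in a specific location relative to the seed pair's arcs, and distinct gluings contribute disjoint blocks by construction --- reducing the problem to the case where $Z_1, Z_2$ arise from the \emph{same} gluing stage, where Figure~3's rigidity applies directly. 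Once the endpoint collision is established, Remark~\ref{rk:2} gives the contradiction and hence $Z_1 = Z_2$.
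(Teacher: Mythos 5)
Your proposal takes essentially the same route as the paper: reduce to a finite case analysis over the possible shifts of one block of six consecutive $\alpha$-arcs relative to the other, and rule out each nontrivial shift using the rigid intersection pattern of Figure 3 together with Remark \ref{rk:2}. The paper executes this by observing that certain concatenations (e.g.\ $y_{5}\ast x_{2}^{-1}$, $y_{3}\ast x_{4}\ast x_{5}$, $y_{2}^{-1}\ast x_{5}$) are loops and that a shifted second $Z$-piece would force another concatenation to be a loop which visibly is not one (or which would violate Remark \ref{rk:2}); this explicit verification is exactly the step you defer to ``inspecting Figure 3's intersection pattern,'' and it is the only substantive content missing from your sketch.
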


\begin{proof} Without loss of generality, $Z_{1}$ starts before $Z_{2}$ along $\alpha$, and terminates before $Z_{2}$ along $\alpha$. We claim that it suffices to show 
\[ Z_{1} \cap \alpha = Z_{2} \cap \alpha; \]
indeed, the arcs of $\alpha$ contained in a $Z$-piece determine uniquely the arcs of $\beta$ comprising the companion arc, and therefore 
\[ Z_{1} \cap \alpha= Z_{2} \cap \alpha \Rightarrow Z_{1} \cap \beta= Z_{2} \cap \beta \]
\[ \Rightarrow Z_{1}= Z_{2}.\]

Identify $Z_{2}$ with the $Z$-piece pictured in Figure $3$; that is, the $\alpha$-arcs within $Z_{2}$ are labeled $x_{1},x_{2},...,x_{6}$, and the $\beta$-arcs are labeled $y_{1},...,y_{6}$ in that order, respectively. 

\textbf{Case 1: $x_{6}(Z_{1})=x_{2}(Z_{2})$.} Note that, in Figure $3$, the concatenation of arcs $y_{5} \ast x_{2}^{-1}$ forms a loop. 

Therefore, if $x_{6}(Z_{1})=x_{2}(Z_{2})$, the concatenation $y_{3} \ast x_{4}$ must also be a loop. By inspection, it is not. \vspace{1 mm}

\textbf{Case 2: $x_{6}(Z_{1})= x_{3}(Z_{2})$.} As seen in Figure $3$, the concatenation $y_{3}\ast x_{4} \ast x_{5}$ is a loop.  

Then if $x_{6}(Z_{1})= x_{3}(Z_{2})$, the concatenation of $y_{6}$ with the next two sub-arcs of $\alpha$ also forms a loop, but this would imply that $y_{6}(Z_{2})$ coincides with the initial point of $x_{1}$, contradicting Remark \ref{rk:2}.  \vspace{1 mm}

\textbf{Case 3: $x_{6}(Z_{1})=x_{4}(Z_{2})$.} Note that the concatenation of arcs $y_{2}^{-1} \ast x_{5}$ forms a loop.  

Then as in the previous $2$ cases, if $x_{6}(Z_{1})=x_{4}(Z_{2})$, the concatenation $y_{3}^{-1} \ast x_{6}$ must be a loop, but as seen in Figure $3$, it is not. \vspace{1 mm}

\textbf{Case 4: $x_{6}(Z_{1})=x_{5}(Z_{2})$.} The concatenation $C$ of $y_{1}^{-1}$ with the forward direction of the next $\alpha$ sub-arc can not be a loop, by Remark \ref{rk:2}. However, $y_{2}^{-1} \ast x_{5}$ is a loop, and therefore $x_{6}(Z_{1})= x_{5}(Z_{2})$ implies that $C$ is a loop.  \vspace{2 mm}

Therefore, if $ Z_{1}^{(\alpha)}\cap Z_{2}^{(\alpha)} \neq \emptyset$, $Z_{1}^{(\alpha)}=Z_{2}^{(\alpha)}$, and thus $Z_{1}=Z_{2}$.

\end{proof}

Since, by Lemma \ref{lm:5}, $Z$-pieces are symmetric with respect to the arcs $a$ and $b$, Lemma \ref{lm:6} immediately implies the following corollary:

\begin{corollary} \label{cor:1}
Let $Z_{1}, Z_{2} \subset \alpha\cup \beta$ be two $Z$-pieces on a $k$-child $(\alpha,\beta)$ for any $k\in \mathbb{N}$. Then 
\[ Z_{1}^{(\beta)}\cap Z_{2}^{(\beta)} \neq \emptyset \Rightarrow Z_{1}= Z_{2}. \]
\end{corollary}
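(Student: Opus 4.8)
The plan is to deduce Corollary~\ref{cor:1} directly from Lemma~\ref{lm:6} by transporting the statement through the symmetry furnished by Lemma~\ref{lm:5}. The key observation is that Lemma~\ref{lm:5} produces a homeomorphism $\phi$ of an abstract $Z$-piece that swaps the arcs $a$ and $b$ while preserving the index labeling, i.e. $x_k \mapsto y_k$ and $y_k \mapsto x_k$ for $1 \le k \le 6$. In particular, for any $Z$-piece $Z_i$ sitting inside a filling pair $(\alpha,\beta)$, the roles of ``interior $\alpha$-arcs'' $Z_i^{(\alpha)} = \{x_2,x_3,x_4,x_5\}$ and ``interior $\beta$-arcs'' $Z_i^{(\beta)} = \{y_2,y_3,y_4,y_5\}$ are interchanged by $\phi$. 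The combinatorial intersection data recorded in Figure~$3$ — which is the only input used in the proof of Lemma~\ref{lm:6} — is symmetric under this relabeling, as Lemma~\ref{lm:5} explicitly asserts.

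Concretely, I would argue as follows. Suppose $Z_1, Z_2 \subset \alpha \cup \beta$ are two $Z$-pieces on a $k$-child with $Z_1^{(\beta)} \cap Z_2^{(\beta)} \neq \emptyset$. Apply the relabeling of Lemma~\ref{lm:5} simultaneously to both $Z_1$ and $Z_2$: this amounts to renaming, within each $Z$-piece, the sub-arcs $x_j$ of $\alpha$ as $y'_j$ and the sub-arcs $y_j$ of $\beta$ as $x'_j$. Under this renaming the hypothesis $Z_1^{(\beta)} \cap Z_2^{(\beta)} \neq \emptyset$ becomes a statement that the primed ``$\alpha$-interiors'' of $Z_1$ and $Z_2$ meet, and — crucially — the intersection combinatorics of each renamed $Z$-piece are exactly those of Figure~$3$. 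Hence the hypotheses of Lemma~\ref{lm:6} are satisfied for the renamed configuration, and the lemma yields that the two $Z$-pieces coincide. Since the renaming is a bijective correspondence between $Z$-pieces and since ``$Z_1 = Z_2$ after renaming'' is equivalent to ``$Z_1 = Z_2$'', the conclusion $Z_1 = Z_2$ follows.

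The one point that needs a little care — and the only real obstacle — is making sure that the proof of Lemma~\ref{lm:6} genuinely uses \emph{nothing} beyond (i) the intersection combinatorics encoded in Figure~$3$ and (ii) the non-coincidence statement in Remark~\ref{rk:2}, so that it is legitimate to feed it the $a \leftrightarrow b$-swapped configuration. Inspecting that proof, the four cases only manipulate concatenations of the arcs $x_j, y_j$ and invoke which concatenations do or do not form loops, together with Remark~\ref{rk:2}; and Remark~\ref{rk:2} itself is visibly symmetric under $x \leftrightarrow y$ (it lists the initial points of $x_1$ and $y_1$ and the terminal points of $x_6$ and $y_6$, a set preserved by the swap). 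Therefore every step survives the relabeling, and the corollary is immediate. I would present this as a short paragraph: state the relabeling, note that Figure~$3$ and Remark~\ref{rk:2} are symmetric in $a$ and $b$, and conclude that Lemma~\ref{lm:6} applies verbatim to give $Z_1 = Z_2$.
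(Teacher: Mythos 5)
Your proposal is correct and follows essentially the same route as the paper: the paper's proof also invokes the symmetry of Lemma \ref{lm:5} to view the shared $\beta$-arcs as shared $\alpha'$-arcs for the swapped pair $(\alpha',\beta'):=(\beta,\alpha)$ and then applies Lemma \ref{lm:6}. Your extra check that the proof of Lemma \ref{lm:6} only uses the Figure $3$ combinatorics and Remark \ref{rk:2}, both symmetric under the swap, is a reasonable way of justifying the step the paper leaves implicit.
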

\begin{proof} If, on the filling pair $(\alpha,\beta)$, $Z_{1}$ and $Z_{2}$ share $\beta$-arcs, then by Lemma \ref{lm:5}, on the pair $(\alpha',\beta'):=(\beta,\alpha)$, $Z_{1}$ and $Z_{2}$ share $\alpha'$-arcs; apply Lemma \ref{lm:6}.
\end{proof}

Putting Lemma \ref{lm:6} and Corollary \ref{cor:1} together, we obtain the following important fact, which states that distinct $Z$-pieces can not overlap:

\begin{lemma} \label{lm:7}
Let $Z_{1},Z_{2}$ be $Z$-pieces on a $k$-child $(\alpha,\beta)$ for any $k\in \mathbb{N}$. Then 
$$
(Z_{1},Z_{2})^{\cap} \neq \emptyset \Rightarrow Z_{1} = Z_{2}.
$$
\end{lemma}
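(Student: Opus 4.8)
The plan is to deduce Lemma \ref{lm:7} directly from Lemma \ref{lm:6} and Corollary \ref{cor:1} by unwinding the definition of $(Z_{1},Z_{2})^{\cap}$ and then invoking the fact, already used repeatedly, that the $\alpha$-arcs of a $Z$-piece determine the $\beta$-arcs of its companion (and vice versa). Concretely, suppose $(Z_{1},Z_{2})^{\cap} \neq \emptyset$. By definition
\[ (Z_{1},Z_{2})^{\cap}= (Z_{1}^{(\alpha)}\cap Z_{2}^{(\alpha)}) \cup (Z_{1}^{(\beta)}\cap Z_{2}^{(\beta)}), \]
so at least one of the two sets on the right is nonempty. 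If $Z_{1}^{(\alpha)}\cap Z_{2}^{(\alpha)} \neq \emptyset$, then Lemma \ref{lm:6} gives $Z_{1}=Z_{2}$; if instead $Z_{1}^{(\beta)}\cap Z_{2}^{(\beta)} \neq \emptyset$, then Corollary \ref{cor:1} gives $Z_{1}=Z_{2}$. In either case we are done, so the proof is a short case split.

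The one point that should be made carefully — and which I expect to be the only place requiring thought rather than bookkeeping — is why the conclusion $Z_1 = Z_2$ at the level of arcs is the same as $Z_1 = Z_2$ at the level of $Z$-pieces, i.e. why a $Z$-piece is fully pinned down by its four interior $\alpha$-arcs together with its four interior $\beta$-arcs. This is exactly the reduction already carried out inside the proof of Lemma \ref{lm:6}: the arcs of $\alpha$ in a $Z$-piece determine uniquely the arcs of $\beta$ forming the companion arc, and conversely; and the two open boundary arcs $\alpha'_k,\alpha'_{k+5}$ (resp.\ the $\beta$-analogues) are the unique arcs of $\alpha$ (resp.\ $\beta$) adjacent to the interior arcs along the curve. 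Hence equality of the interior arc sets forces equality of the whole $Z$-pieces as subsurfaces of $\alpha\cup\beta$. Since Lemma \ref{lm:6} and Corollary \ref{cor:1} are stated with the conclusion ``$Z_1 = Z_2$'' already, I can simply cite them without re-deriving this, and the lemma follows immediately. I do not anticipate any genuine obstacle here: all the combinatorial work was front-loaded into Lemma \ref{lm:6}, and Lemma \ref{lm:7} is purely its logical packaging.
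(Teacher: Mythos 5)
Your proof is correct and is exactly the paper's argument: the paper also deduces Lemma \ref{lm:7} immediately from Lemma \ref{lm:6}, Corollary \ref{cor:1}, and the definition of $(Z_{1},Z_{2})^{\cap}$ via the same case split. The extra care you take about arc-level versus $Z$-piece-level equality is already absorbed into the statements of Lemma \ref{lm:6} and Corollary \ref{cor:1}, so nothing further is needed.
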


\begin{proof} Immediate from \ref{lm:6}, Corollary \ref{cor:1} and the definition of $(Z_{1},Z_{2})^{\cap}$.
\end{proof}

Henceforth, we assume that $g \geq 3$ is odd. The even case is completely analogous, and will therefore follow from the construction of a minimally intersecting filling pair on $S_{4}$ at the end of this section.

Given an odd integer $g \geq 3$, we define the set $L_{g}$ of integer sequences by
\[L_{g}:= \left\{(\alpha_{1}, \alpha_{2},...,\alpha_{\frac{g-1}{2}} ) : \alpha_{1}< \alpha_{2}<...<\alpha_{\frac{g-1}{2}}, \alpha_{i} \leq 4i-3 \right\}.\]

Let $W_{g}$ denote the set of all oriented minimally intersecting filling pairs on $S_{g}$.

\begin{lemma} \label{lm:8}
$|W_{g}| \geq |L_{g}|.$
\end{lemma}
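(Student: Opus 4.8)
The plan is to exhibit an injection from $L_g$ into $W_g$. Fix the odd seed pair $(\alpha(1),\beta(1))$ on $S_1$, which has a single intersection point. Given a sequence $(\alpha_1,\dots,\alpha_{(g-1)/2}) \in L_g$, I would build a filling pair on $S_g$ by performing $(g-1)/2$ successive $Z$-attachments, where at the $i$-th stage we pass from a minimally intersecting filling pair on $S_{2i-1}$ to one on $S_{2i+1}$ by excising a disk around the $\alpha_i$-th intersection point (counted along $\alpha$ in the chosen orientation) and gluing in a copy of $Z$ as described in Section 2.2. The constraint $\alpha_i \le 4i-3$ is exactly the requirement that the chosen index not exceed the number $2(2i-1)-1 = 4i-3$ of intersection points available on $S_{2i-1}$, so every such sequence does yield a legitimate sequence of attachments, and the output lies in $W_g$ by the discussion preceding Remark 2.6 (the resulting pair fills and, by the Euler characteristic count, intersects minimally). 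The condition $\alpha_1 < \alpha_2 < \cdots$ will be used to pin down a canonical labeling so that the map is well-defined and injective.

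The key steps, in order: (1) Formalize the attachment map $\Phi\colon L_g \to W_g$ by induction on $i$, checking at each stage that $4i-3$ is indeed the number of intersection points of the genus-$(2i-1)$ pair, so that $\alpha_i \le 4i-3$ always designates an actual vertex; (2) Show $\Phi$ is injective. This is where Lemma 2.12, Corollary 2.13, and especially Lemma 2.14 (distinct $Z$-pieces are disjoint) do the work: a pair $(\alpha,\beta) = \Phi(\underline{\alpha})$ in the image contains exactly $(g-1)/2$ pairwise-disjoint $Z$-pieces $Z_1,\dots,Z_{(g-1)/2}$, and because they are disjoint they inherit a linear order from the orientation of $\alpha$ (say $Z_j$ is the $j$-th one encountered); (3) Recover the sequence $\underline{\alpha}$ from $(\alpha,\beta)$ by reading off, for each $j$, the position along $\alpha$ at which $Z_j$ was grafted, after "collapsing" the later $Z$-pieces $Z_{j+1},\dots$ back to points — this is well-defined precisely because Lemma 2.14 guarantees the later pieces occupy disjoint sub-arcs and hence each collapses to a single intersection point of the smaller pair without interfering with the count that produced $\alpha_j$. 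Carrying this out recursively from $j = (g-1)/2$ down to $j=1$ reconstructs $\underline{\alpha}$ uniquely, so $\Phi$ is injective and $|W_g| \ge |L_g|$.

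The main obstacle I anticipate is step (3): making precise the "collapse the later $Z$-pieces and read off the grafting index" procedure and verifying that it genuinely inverts $\Phi$. One has to check that the linear order on the $Z$-pieces coming from the orientation of $\alpha$ matches the order in which they were attached (or at least that the reconstruction is order-independent), and that collapsing $Z_{j+1},\dots,Z_{(g-1)/2}$ yields exactly the genus-$(2j-1)$ intermediate surface from the construction, with the $\alpha_j$-th vertex being the grafting site. This is a bookkeeping argument that leans entirely on the disjointness statement Lemma 2.14 — without it the sub-arcs of different $Z$-pieces could interleave along $\alpha$ and the notion of "the position where $Z_j$ was attached" would be ambiguous. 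A secondary, more minor point is to confirm that the $Z$-attachment construction, as described, does not depend on auxiliary choices (e.g.\ how the boundary circle of the excised disk is identified with $\partial Z$) in a way that would spoil well-definedness of $\Phi$; if it does, one restricts to a fixed convention, which only shrinks the image and hence still gives the desired inequality.
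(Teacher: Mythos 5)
Your step (1) is exactly the paper's construction of the injection $f\colon L_{g}\hookrightarrow W_{g}$ (successive $Z$-attachments at the vertices prescribed by the sequence, with the same count $4i-3=2(2i-1)-1$ justifying that each index names an actual intersection point), so that part is fine. The gap is in your steps (2)--(3): you assert that a pair in the image contains \emph{exactly} $\tfrac{g-1}{2}$ pairwise disjoint $Z$-pieces, in bijection with the attachment steps, and you invert the map by reading off their grafting positions after collapsing the later ones. The paper deliberately does not claim this (Remark \ref{rk:3}: it is unclear how many $Z$-pieces $f((\alpha))$ contains; only the one attached \emph{last} is guaranteed). The reason is that the sequence is only required to be strictly increasing, so the next index may satisfy $\alpha_{i}<\alpha_{i+1}\leq \alpha_{i}+4$; in the relabelling scheme the five intersection points of the $Z$-piece attached at step $i$ receive the labels $\alpha_{i},\dots,\alpha_{i}+4$, so the next excision may take place at an interior vertex of that piece and destroy it. Moreover nothing rules out ``accidental'' $Z$-pieces not produced by any attachment (the paper has to argue separately, via the genus-$2$ nonexistence result, that its $S_{4}$ seed contains none). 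Without a bijection between the $Z$-pieces visible in the final pair and the attachment steps, your ``collapse the later pieces and read off the grafting index'' procedure is not well defined, and Lemma \ref{lm:7} (distinct $Z$-pieces do not overlap) alone cannot repair it.

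The paper's injectivity argument is structured precisely to avoid this: if $f((\alpha))=f((\alpha'))$ but the last entries differ, the two \emph{last-attached} $Z$-pieces are distinct and hence non-overlapping by Lemma \ref{lm:7}; cutting one of them out recovers the previous stage of the $(\alpha)$-construction while leaving the other intact, and iterating this peeling (using monotonicity of the sequence) forces a $Z$-piece to survive all the way down to the seed pair on $S_{1}$, a contradiction. Hence the last entries agree, that common piece can be removed from both constructions, and one inducts downward through the indices, at every stage invoking only a $Z$-piece that is guaranteed to exist. To salvage your reconstruction strategy you would have to prove the missing bijection (or restrict the admissible sequences so that attachments never land inside earlier pieces and spurious pieces are excluded, which changes the count); as written, the inversion step does not go through.
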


\begin{proof} We show the existence of an injection $f=f_{g}:L_{g} \hookrightarrow W_{g}$. That is, given any integer sequence $(\alpha)=(\alpha_{1},...,\alpha_{\frac{g-1}{2}})$ in $L_{g}$, we will show how to construct a corresponding oriented minimally intersecting filling pair in a unique way. 

Start with the filling pair $(\alpha(1),\beta(1))$ on $S_{1}$, and attach a $Z$-piece along the only intersection point to obtain an oriented minimally intersecting filling pair $(\alpha(3),\beta(3))$ on $S_{3}$. As in the original construction at the beginning of this section, label the $2g-1=5$ intersection points $v_{1},v_{2},...,v_{5}$ by choosing an initial intersection point and counting with respect to the orientation along $\alpha(3)$. Note that $\alpha_{2} \leq 5$; then attach a $Z$-piece by excising a small disk centered around $v_{\alpha_{2}}$ and gluing to obtain an oriented pair $(\alpha(5),\beta(5))$ on $S_{5}$. 

Relabel the $2(5)-1=9$ intersection points as follows: any of the original intersection points $v_{k}$ coming from $\alpha(3) \cup \beta(3)$ with $k <\alpha_{2}$ are labeled $v_{k}$. Intersection points $v_{j}$ coming from $\alpha(3) \cup \beta(3)$ with $j> \alpha_{2}$ are relabeled as $v_{j+4}$. The $5$ intersection points coming from the $Z$-piece are labeled, in order along the arc $a$, $v_{\alpha_{2}},v_{\alpha_{2}+1},v_{\alpha_{2}+2},v_{\alpha_{2}+3}, v_{\alpha_{2}+4}$. This produces a labeling of all $9$ intersection points which respects the orientation of $\alpha(5)$. 

We then glue on an additional $Z$-piece by excising the intersection point $v_{\alpha_{3}}$, obtaining the oriented pair $(\alpha(7),\beta(7))$ on $S_{7}$, and we relabel the now $13$ intersection points as above. 

We repeat this process for each $i \leq \frac{g-1}{2}$, each time excising the intersection point labeled $v_{\alpha_{i}}$ and gluing on another $Z$-piece. This process terminates with an oriented minimally intersecting filling pair in $W_{g}$, which we define to be $f((\alpha))$. 

\begin{remark} \label{rk:3}
It is unclear how many $Z$-pieces the filling pair $f((\alpha))$ contains. However, it necessarily contains at least one $Z$-piece: the one associated to the last index $\alpha_{\frac{g-1}{2}}$. 
\end{remark}

It remains to show that $f$ is injective. Assume $f((\alpha))=f((\alpha'))$. Then we claim that 
\[ \alpha_{\frac{g-1}{2}}= \alpha'_{\frac{g-1}{2}}. \]

Suppose not; then by Remark \ref{rk:3}, the filling pair $f((\alpha))=f((\alpha'))$ contains $2$ distinct $Z$-pieces $Z,Z'$, associated to $\alpha_{\frac{g-1}{2}}$ and $\alpha'_{\frac{g-1}{2}}$, respectively. Without loss of generality, $\alpha_{\frac{g-1}{2}} < \alpha'_{\frac{g-1}{2}}$. By Lemma \ref{lm:7}, $Z$ and $Z'$ can not overlap. 

Therefore, if we remove $Z$ by cutting along the separating curve in $S_{g}$ associated to its boundary and gluing in a disk, we obtain an oriented minimally intersecting filling pair on $S_{g-2}$, which still contains $Z'$. Then since $(\alpha)$ is a monotonically increasing sequence, by the same argument the filling pair obtained by successively cutting along the $Z$-pieces associated to smaller and smaller indices of $(\alpha)$ will still contain $Z'$. Eventually, we conclude that the filling pair $(\alpha(1),\beta(1))$ on $S_{1}$ contains $Z'$, a contradiction. Hence, $\alpha_{\frac{g-1}{2}}=\alpha'_{\frac{g-1}{2}}$. 

Applying the exact same argument inductively, we have $\alpha_{k}= \alpha'_{k}$ for each $k < \frac{g-1}{2}$ as well. Hence $f$ is injective.  
\end{proof}

\begin{corollary} \label{cor:2}
For $g$ odd, 
$$N(g) > \frac{\prod_{k=1}^{\frac{g-1}{2}} 3k+2}{4\cdot (2g-1)^{2} \cdot \left(\frac{g-1}{2}\right)!}.$$

\end{corollary}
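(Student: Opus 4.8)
The plan is to combine the lower bound $|W_g| \geq |L_g|$ from Lemma \ref{lm:8} with the passage from oriented filling pairs to ordinary ones (Lemma \ref{lm:3}) and then to $\mbox{Mod}(S_g)$-orbits. First I would observe that every oriented minimally intersecting filling pair determines a filling permutation, and by Lemma \ref{lm:3} two such permutations arise from filling pairs in the same $\mbox{Mod}(S_g)$-orbit exactly when they are conjugate by a twisting permutation $\mu_g^l \kappa_g^k \delta_g^j \eta_g^i$. Since there are at most $4(2g-1)^2$ twisting permutations, the number of $\mbox{Mod}(S_g)$-orbits of minimally intersecting filling pairs is at least $|W_g| / (4(2g-1)^2)$, hence at least $|L_g| / (4(2g-1)^2)$.

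Next I would compute $|L_g|$ exactly. The set $L_g$ consists of strictly increasing integer sequences $(\alpha_1, \dots, \alpha_{(g-1)/2})$ with $\alpha_i \leq 4i-3$; since the $\alpha_i$ must also be positive (being labels of intersection points, so $\alpha_i \geq 1$, and strict monotonicity forces $\alpha_i \geq i$ — actually $\alpha_i \geq \alpha_{i-1}+1$), I would count by choosing $\alpha_1 \in \{1\}$ first, then noting $\alpha_2$ can be anything in $\{\alpha_1+1, \dots, 5\}$, and so on; the cleanest route is to count sequences $(\alpha_i)$ with $\alpha_1 < \dots < \alpha_m$ and $\alpha_i \leq 4i-3$ greedily from the top index down, which gives that the number of valid choices for $\alpha_m$ given the rest is governed by the constraint $\alpha_m \leq 4m-3$ and $\alpha_m > \alpha_{m-1}$. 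Working this out, the total count is $\prod_{i=1}^{(g-1)/2} (3i+2) / \left(\tfrac{g-1}{2}\right)!$ — the numerator $3i+2$ arising because at stage $i$ the new index $\alpha_i$ has $4i-3$ possible positions of which the previously chosen ones among $1, \dots, 4i-3$ must be excluded, and after a change of counting variable the product telescopes to $\prod (3i+2)$ divided by the factorial that accounts for the ordering being forced. I would verify this with small cases ($g=3$: $|L_3|$ should be $1$; $g=5$: should be $5$) before committing to the formula.

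Combining, $N(g) \geq |L_g|/(4(2g-1)^2) = \prod_{k=1}^{(g-1)/2}(3k+2) \big/ \left(4(2g-1)^2 \left(\tfrac{g-1}{2}\right)!\right)$, and since $|W_g| > |L_g|$ strictly (there are filling pairs not obtained by the $Z$-piece construction, or more simply the injection need not be a bijection and in any case the orbit count is a strict inequality once $g$ is large enough that twisting permutations genuinely collapse things), we get the stated strict inequality. I expect the main obstacle to be pinning down the exact value of $|L_g|$ and presenting the combinatorial identity cleanly: the constraint $\alpha_i \leq 4i-3$ combined with strict increase does not immediately factor, so I would likely set $\beta_i = \alpha_i - (i-1)$ or a similar shift to turn it into counting weakly increasing sequences in a box, then recognize the resulting product. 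Everything else — the division by the twisting-permutation count and the strictness — is a direct appeal to Lemmas \ref{lm:3} and \ref{lm:8} and the remark bounding the number of twisting permutations by $4(2g-1)^2$.
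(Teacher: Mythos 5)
Your overall architecture is the same as the paper's: invoke Lemma \ref{lm:8} to get $|W_g|\geq |L_g|$, divide by the bound $4(2g-1)^2$ on the number of twisting permutations coming from Lemma \ref{lm:3}, and then estimate $|L_g|$. The gap is in the last step. You assert that $|L_g|$ equals $\prod_{k=1}^{(g-1)/2}(3k+2)\big/\left(\tfrac{g-1}{2}\right)!$ exactly, but this is false, and your own proposed sanity checks refute it: for $g=3$ the formula gives $5$ while $|L_3|=1$ (the only sequence is $\alpha_1=1$), and for $g=5$ it gives $5\cdot 8/2!=20$ while $|L_5|=4$ (one must have $\alpha_1=1$ and $\alpha_2\in\{2,3,4,5\}$), not the value $5$ you predicted. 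In particular the inequality $|L_g|\geq \prod_k(3k+2)\big/\left(\tfrac{g-1}{2}\right)!$ is simply not true for small odd $g$, so no exact evaluation of $|L_g|$ can deliver it; it only becomes true for large $g$ because $|L_g|$ in fact grows with a larger exponential base than $3$.

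The count you are reaching for is the paper's: count tuples with $\alpha_i\leq 4i-3$ and divide by a factorial to account for ordering. Done carefully this goes as follows: a tuple with distinct entries has exactly $4i-3-(i-1)=3i-2$ admissible values at stage $i$ (all earlier entries already lie in $[1,4i-3]$), so there are $\prod_i(3i-2)$ such tuples; sorting any of them lands in $L_g$ (at least $i$ of the entries are $\leq 4i-3$, so the $i$-th smallest is too), and each element of $L_g$ has at most $\left(\tfrac{g-1}{2}\right)!$ preimages under sorting, whence $|L_g|\geq \prod_i(3i-2)\big/\left(\tfrac{g-1}{2}\right)!$. This has the right growth rate $\sim 3^{g/2}$ up to polynomial factors, but a different constant than the $\prod_k(3k+2)$ appearing in the corollary (the paper's own proof shares this bookkeeping discrepancy, even shifting the upper limit of the product between statement and proof). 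As written, your step ``compute $|L_g|$'' fails, and the substitute change of variables you gesture at ($\beta_i=\alpha_i-(i-1)$) does not produce a box, since the upper bounds $4i-3$ still depend on $i$ after the shift. Separately, your justification of strictness (``the injection need not be a bijection'') is not an argument; any strict inequality has to come out of the counting itself.
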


\begin{proof} By Lemma \ref{lm:8}, it suffices to show that

$$
|L_{g}|> \frac{\prod_{k=1}^{\frac{g-1}{2}-1} 3k+2}{ \left(\frac{g-1}{2}-1\right)!}.
$$
The lower bound on $N(g)$ follows by dividing by an upper bound on the number of twisting permutations. 

Moreover, we bound $|L_{g}|$ by counting the number of length $\frac{g-1}{2}-1$ integer sequences $(\alpha_{i})_{i}$ such that $\alpha_{i} \leq 4i-3$, and then dividing by the order of the permutation group $\Sigma_{\frac{g-1}{2}-1}$ to account for the fact that the sequences in $L_{g}$ are monotonically increasing. 
\end{proof}
The following lemma then implies the exponential growth of $N(g)$:
\begin{lemma} \label{lm:9}
\[\frac{\prod_{k=1}^{\frac{g-1}{2}-1} 3k+2}{4\cdot (2g-1)^{2} \cdot \left(\frac{g-1}{2}-1\right)!} \sim \frac{3^{g/2}}{g^{2}}.\]
\end{lemma}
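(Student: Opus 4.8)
The plan is to show that both sides of the claimed asymptotic equivalence have the same exponential growth rate, namely $3^{g/2}$, up to a polynomial factor of order $g^{2}$, and then to verify that the ratio is bounded away from $0$ and $\infty$. The key is to identify the product $\prod_{k=1}^{m} (3k+2)$, where $m := \frac{g-1}{2}-1$, with a ratio of Gamma functions, and then apply Stirling's formula. First I would write
\[
\prod_{k=1}^{m} (3k+2) = 3^{m} \prod_{k=1}^{m}\left(k+\tfrac{2}{3}\right) = 3^{m}\,\frac{\Gamma(m+\tfrac{5}{3})}{\Gamma(\tfrac{5}{3})}.
\]
Dividing by $m! = \Gamma(m+1)$ gives
\[
\frac{\prod_{k=1}^{m}(3k+2)}{m!} = \frac{3^{m}}{\Gamma(5/3)}\cdot \frac{\Gamma(m+5/3)}{\Gamma(m+1)}.
\]
The ratio $\Gamma(m+5/3)/\Gamma(m+1)$ is asymptotic to $m^{2/3}$ by the standard estimate $\Gamma(m+a)/\Gamma(m+b) \sim m^{a-b}$, so the whole expression is $\sim 3^{m} m^{2/3}/\Gamma(5/3)$. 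Substituting $m = \frac{g-3}{2}$ yields $3^{m} = 3^{(g-3)/2} = 3^{-3/2}\cdot 3^{g/2}$, and $m^{2/3} \sim (g/2)^{2/3}$, so the numerator of the left-hand side is $\sim C_{1}\, 3^{g/2} g^{2/3}$ for an explicit constant $C_{1}$.

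Next I would handle the denominator $4\cdot(2g-1)^{2}\cdot m!$ — wait, more precisely the full left-hand side is $\left(\prod_{k=1}^{m}(3k+2)/m!\right) \big/ \left(4(2g-1)^{2}\right)$, so I just divide the asymptotic $C_{1} 3^{g/2} g^{2/3}$ by $4(2g-1)^{2} \sim 16 g^{2}$. This produces
\[
\frac{\prod_{k=1}^{m}(3k+2)}{4(2g-1)^{2}\, m!} \sim \frac{C_{1}}{16}\cdot \frac{3^{g/2} g^{2/3}}{g^{2}} = C_{2}\cdot \frac{3^{g/2}}{g^{4/3}}.
\]

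At this point I notice a discrepancy with the claimed right-hand side $3^{g/2}/g^{2}$: the computation above gives $g^{-4/3}$, not $g^{-2}$. Since the paper only needs the relation $\sim$ to conclude exponential growth, and since $3^{g/2}/g^{4/3}$ and $3^{g/2}/g^{2}$ have the same exponential rate $3^{g/2}$, the cleanest route is to observe that for the purposes of Theorem \ref{thm:1} what matters is $\limsup (N(g))^{1/g} = \sqrt{3}$; alternatively one absorbs the polynomial discrepancy by noting the lower bound $f(g) \sim 3^{g/2}/g^{2}$ in Theorem \ref{thm:1} is weaker than $3^{g/2}/g^{4/3}$ and hence still valid. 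I would therefore present the argument as: establish $\prod_{k=1}^{m}(3k+2)/m! \sim c\, 3^{g/2} g^{2/3}$ via the Gamma-function identity and Stirling, divide by the polynomial $4(2g-1)^{2}$, and conclude that the resulting quantity is $\sim 3^{g/2}/g^{2}$ in the sense that it is squeezed between constant multiples of $3^{g/2}/g^{2}$ (on the lower side, trivially, since $g^{-4/3} \geq g^{-2}$ for large $g$) — or, if one wants a genuine $\sim$, simply restate the target as $3^{g/2}/g^{4/3}$ and carry the corrected exponent through Theorem \ref{thm:1} and Lemma \ref{lm:9}.

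\textbf{Main obstacle.} The only real subtlety is the bookkeeping of the polynomial power of $g$: tracking the index shift $m = \frac{g-3}{2}$, the contribution $m^{2/3}$ from the Gamma ratio, and the $(2g-1)^{2}$ in the denominator, and then reconciling the resulting exponent with the statement. The exponential factor $3^{g/2}$ falls out immediately and unambiguously from $3^{m}$ with $m \approx g/2$; everything else is a routine but error-prone Stirling computation, and the one genuine decision is whether to correct the stated polynomial exponent to $g^{4/3}$ or to phrase Lemma \ref{lm:9} as a two-sided polynomial bound so that the weaker form $3^{g/2}/g^{2}$ remains literally true.
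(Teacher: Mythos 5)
Your computation is correct, and it is in fact more careful than the paper's own argument, which disposes of the lemma in one line: ``immediate from the definition of $\sim$ after factoring a $3^{(g-1)/2}$ out from the numerator.'' Factoring the exponential factor out of $\prod_{k=1}^{m}(3k+2)$ with $m=\tfrac{g-3}{2}$ leaves the slowly divergent product $\prod_{k=1}^{m}\bigl(1+\tfrac{2}{3k}\bigr)$, which the paper implicitly treats as bounded; as your Gamma-function/Stirling computation shows, it grows like a constant times $m^{2/3}$. Hence the left-hand side of the lemma is comparable to $3^{g/2}/g^{4/3}$, and under the paper's own definition of $\sim$ (the ratio must tend to a finite nonzero limit) its ratio to $3^{g/2}/g^{2}$ tends to infinity, so the lemma as literally stated does not hold: the polynomial exponent should be $4/3$, not $2$. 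Your diagnosis and both proposed repairs are the right ones --- either restate the lemma (and the function $f(g)$ in Theorem \ref{thm:1}) with $3^{g/2}/g^{4/3}$, or keep $3^{g/2}/g^{2}$ as a one-sided statement, noting that since only a lower bound on $N(g)$ is needed and $3^{g/2}/g^{4/3}\geq 3^{g/2}/g^{2}$, the conclusion of Theorem \ref{thm:1} is unaffected either way. The only slip in your write-up is the phrase claiming the quantity is ``squeezed between constant multiples of $3^{g/2}/g^{2}$'': it is bounded below, not above, by such multiples, as your own parenthetical concedes; with that wording cleaned up (or with the exponent corrected to $4/3$ and a genuine two-sided $\sim$), your argument is complete and supersedes the paper's one-line proof.
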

\begin{proof} Immediate from the definition of $\sim$ after factoring a $3^{(g-1)/2}$ out from the numerator.
\end{proof}

As mentioned above, the proof of the lower bound for $g$ even, $g>2$ is exactly analogous. Hence to complete the proof of the lower bound, it suffices to construct a minimally intersecting filling pair on $S_{4}$; equivalently, we demonstrate the existence of a filling permutation in $\Sigma_{28}$: 
$$
(1,14,27,18,9,8,17,26,7,4,25,20,13,2,7,10,5,12,15,28,3,10,19,22)\in \Sigma_{28}. 
$$

Recall that the proof of Lemma \ref{lm:8} relied on the assumption that there are no $Z$-pieces in this filling pair. In section $2.4$ below, we show that there are no filling pairs on $S_{2}$ whose complement is connected. Thus, if there was a $Z$-piece within the filling pair on $S_{4}$ associated to the permutation above in $\Sigma_{28}$, we could excise a copy of $Z$ from $S_{4}$ to obtain a filling pair on $S_{2}$ whose complement is connected, a contradiction.

\subsection{Proof of the Upper Bound}

The upper bound from Theorem \ref{thm:1} is obtained by bounding the number of filling permutations from above. 

\begin{lemma} \label{lm:10}
Let $\sigma \in \Sigma_{8g-4}$ be a filling permutation. Then $\sigma= Q^{4g-2}C$, where $C$ is a square root of $Q^{4g-2}\tau$. Conversely, if $C$ is a square root of $Q^{4g-2}\tau$ such that $\sigma= Q^{4g-2}C$ is a parity respecting $(8g-4)$-cycle, then $\sigma$ is a filling permutation.

\end{lemma}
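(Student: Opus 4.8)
The plan is to reduce the whole statement to the single observation that $R := Q^{4g-2}$ is an involution. Indeed $Q = (1,2,\dots,8g-4)$ has order $8g-4$, so $Q^{4g-2}$ has order dividing $2$, and it is not the identity (it moves $1$ to $4g-1$), whence $R^2 = \mathrm{id}$ and $R^{-1} = R$. Once this is in place, both directions become purely formal manipulations in $\Sigma_{8g-4}$, using only associativity.

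For the forward implication, I would start from a filling permutation $\sigma$, which by definition is a parity-respecting $(8g-4)$-cycle satisfying $\sigma Q^{4g-2}\sigma = \tau$, i.e.\ $\sigma R\sigma = \tau$ (Lemma \ref{lm:2}). Setting $C := R\sigma$, the identity $R^2 = \mathrm{id}$ gives $Q^{4g-2}C = R(R\sigma) = \sigma$, which is the asserted factorization, while associativity together with the permutation equation gives
\[ C^2 = (R\sigma)(R\sigma) = R(\sigma R\sigma) = R\tau = Q^{4g-2}\tau, \]
so $C$ is a square root of $Q^{4g-2}\tau$.

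For the converse, I would take any square root $C$ of $Q^{4g-2}\tau = R\tau$, set $\sigma := Q^{4g-2}C = RC$, and assume in addition (as hypothesized) that $\sigma$ is a parity-respecting $(8g-4)$-cycle. Then I would check the permutation equation directly:
\[ \sigma R\sigma = (RC)R(RC) = RC(R^2)C = RC^2 = R(R\tau) = \tau, \]
again using only $R^2 = \mathrm{id}$ and associativity. Since $\sigma$ now satisfies all three defining properties of a filling permutation ($(8g-4)$-cycle, parity respecting, and $\sigma Q^{4g-2}\sigma = \tau$), the proof is complete.

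I do not expect any real obstacle here; the argument is bookkeeping. The one point deserving care is the verification that $Q^{4g-2}$ squares to the identity, since the factorization, the passage to a square root, and the recovery of the permutation equation in the converse all rest on it. One should also be careful, in the converse, to treat the cycle and parity conditions on $\sigma$ as hypotheses rather than things to be reproved — they genuinely are extra constraints on the square root $C$ and are not automatic.
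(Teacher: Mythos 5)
Your proof is correct and is essentially the paper's own argument: both hinge on $Q^{4g-2}$ being an involution and then verify the factorization and the permutation equation by the same formal manipulations, with the cycle and parity conditions treated as hypotheses in the converse. No issues.
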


\begin{proof} Suppose $\sigma$ is a filling permutation. Then 
$$
 (Q^{4g-2}\sigma)^{2}= Q^{4g-2}(\sigma Q^{4g-2} \sigma) \text{, and } Q^{4g-2} \tau.
 $$

Conversely, if $C$ is a square root of $Q^{4g-2}\tau$,
$$
 (Q^{4g-2}C)Q^{4g-2}(Q^{4g-2}C)= Q^{4g-2}C^{2}= \tau.
 $$
\end{proof}
Therefore, $N(g)$ is bounded above by the number of square roots of $Q^{4g-2}\tau$; the upper bound of Theorem \ref{thm:1} is obtained by identifying a collection of such square roots $C$ such that $Q^{4g-2}C$ is not an $(8g-4)$-cycle, and subtracting this from the total number of square roots. 

Note that $Q^{4g-2}\tau$ is a composition of disjoint transpositions:

\[ Q^{4g-2}\tau=  (1,4g+1)(2,4g+2)(3,4g+3)...(4g-4,8g-4)(4g-3,4g-1)(4g-2,4g).\] 

The square roots of $Q^{4g-2}\tau$ are obtained by partitioning the above transpositions into pairs and ``interleaving'' the pairs. For instance, one such root is obtained by pairing off $(1,4g+1)$ with $(2,4g+2)$, and in general for each of $i\leq 4g-3$, pairing off the $i^{th}$ transposition above with the $(i+1)^{st}$, and converting each such pair into a single $4$-cycle by interleaving the $4$ numbers :
\[ (1,2,4g+1,4g+2)(3,4,4g+3,4g+4)...(4g-3,4g-2,4g-1,4g).\]

Note that for a particular pair of transpositions, there are two $4$-cycles we can obtain by interleaving. Furthermore, because $Q^{4g-2}$ is parity respecting and sends even numbers to even numbers, the only way $Q^{4g-2}C$ can be a parity respecting $(8g-4)$-cycle is if it sends even numbers to odd ones. Therefore there are exactly $2^{2g-1}(2g-1)!$ square roots $C$ of $Q^{4g-2}\tau$ such that $Q^{4g-2}C$ is parity respecting. 

We remark that the fact that $2^{2g-1}(2g-1)!$ is an upper bound for $N(g)$ is immediate without the help of Lemma \ref{lm:10} (label the intersection points from $1$ to $2g-1$ according to their order along $\alpha$ and comparing this labeling to an analogous one obtained by ordering along $\beta$ produces a permutation in $\Sigma_{2g-1}$; specifying the sign of each intersection accounts for the factor of $2^{2g-1}$). 

However, using Lemma \ref{lm:10}  and the structure of the square roots of $Q^{4g-2}\tau$, we can obtain strictly smaller lower bounds. In particular, consider the following $2^{2g-2}(2g-1)(2g-3)!$ permutations:

Pair the transposition $(1,4g+1)$ with an arbitrary transposition $(k, j)$ of even numbers in the disjoint cycle notation for $Q^{4g-2}\tau$; there are $2\cdot (2g-1)$ ways of doing this. If $C$ is any square root of $Q^{4g-2}\tau$ with $(1,k,4g+1,j)$ as one of its $4$-cycles, then $Q^{4g-2}C$ maps $1$ to $(k+4g-2) \pmod{8g-4}$ (or $8g-4$ if $k=4g-2$), the number representing the inverse direction of the arc corresponding to $k$. Note that by inspection of $Q^{4g-2}\tau$, 
$$ (k+ 4g-2) \neq  j  \pmod{8g-4}, $$
and therefore we are free to pair the transposition containing $(k+4g-2)  \pmod{8g-4}$ with the transposition containing $4g-1$, and we choose the interleaving of these transpositions so that the resulting $4$ cycle sends $(k+4g-2) \pmod{8g-4}$ to $4g-1$. 

Then we can obtain a parity respecting square root $C$ by pairing off the remaining transpositions in any way, evens with odds. However, no square root $C$ obtained in this fashion has the property that $Q^{4g-2}C$ is an $(8g-4)$-cycle, because by construction,
$$( Q^{4g-2}C)^{2}(1)= 1.$$

Therefore, the number of admissible square roots is bounded above by 
$$2^{2g-1}- 2^{2g-2}(2g-1)(2g-3)! = 2^{2g-2}\frac{(4g-5)(2g-1)!}{2g-2}. $$
Note that the number of twisting permutations is bounded below by $2g-1$, since $\kappa_{g}$ has order $2g-1$. This completes the proof of the upper bound of Theorem \ref{thm:1}. $\Box$

\subsection{Genus $2$}
In this section, we show that there are no filling pairs intersecting $3$ times on $S_{2}$. For a filling pair with $4$ intersections on $S_{2}$, see page $41$ of Farb-Margalit \cite{Far-Mar}. 

\begin{theorem} \label{thm:4}
Suppose $(\alpha,\beta)$ is a filling pair on $S_{2}$. Then $i(\alpha,\beta)\geq 4$. 

\end{theorem}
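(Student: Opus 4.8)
By Lemma \ref{lm:1}, any filling pair $(\alpha,\beta)$ on $S_2$ satisfies $i(\alpha,\beta) \geq 3$, with equality only if the complement $S_2 \setminus (\alpha \cup \beta)$ is a single disk. So the plan is to rule out the case $i(\alpha,\beta) = 3$ by showing that no such configuration can exist. The natural framework is exactly the one developed in Section 2.1: if there were a minimally intersecting filling pair on $S_2$ with $i(\alpha,\beta) = 3$, then choosing orientations would produce a filling permutation $\sigma \in \Sigma_{8g-4} = \Sigma_{12}$, i.e. a parity-respecting $12$-cycle solving $\sigma Q^6 \sigma = \tau$, where $Q = (1,2,\dots,12)$ and $\tau$ is the corresponding product of four $3$-cycles. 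By Lemma \ref{lm:10}, such $\sigma$ would have the form $\sigma = Q^6 C$ with $C$ a square root of $Q^6\tau$, and $Q^6\tau$ is an explicit product of six disjoint transpositions; I would enumerate the parity-respecting square roots $C$ (there are $2^3 \cdot 3! = 48$ of them, obtained by pairing the six transpositions into three pairs, evens with odds, and interleaving each pair into a $4$-cycle in one of two ways) and check directly that for none of them is $Q^6 C$ a $12$-cycle. This is a finite, mechanical verification.

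Alternatively — and this is probably the cleaner exposition — I would argue topologically without invoking the permutation machinery. Suppose $\alpha, \beta$ are in minimal position with $|\alpha \cap \beta| = 3$ and $S_2 \setminus (\alpha \cup \beta)$ a single octagon $P$. The three intersection points are the four vertices of $P$ taken with multiplicity... no: $P$ has $8$ vertices, all glued to the $3$ intersection points, so the vertex identification has classes of sizes summing to $8$ over $3$ classes — but the local picture at a $4$-valent vertex of $\alpha \cup \beta$ forces each intersection point to be the image of exactly... here one must be careful. Each intersection point of the two curves has $4$ germs of edges emanating, hence corresponds to exactly $4$ corners of $P$ wait no, $P$ has $8$ corners total and there are $4$ germs per intersection point times $3$ points $= 12$; the discrepancy means I should instead count: the edge set of $\alpha \cup \beta$ has $2 \cdot 3 = 6$ edges, $P$ has $8$ sides each glued in pairs giving $4$ edges — contradiction with $6$. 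Let me recompute: $\chi = V - E + F = 3 - 6 + 1 = -2 = \chi(S_2)$, consistent, and $P$ is a $2E = $ hmm, $P$ has $2 \cdot (\text{number of edges}) = $ no. Cutting along $\alpha \cup \beta$: each edge becomes two sides of $P$, so $P$ has $2 \cdot 6 = 12$ sides, a $12$-gon, not an octagon. This is the key structural fact and matches $8g - 4 = 12$.

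So the topological approach: a minimally intersecting filling pair on $S_2$ cuts to a $12$-gon whose edges are labeled by $\alpha_1^{\pm}, \alpha_2^{\pm}, \alpha_3^{\pm}, \beta_1^{\pm}, \beta_2^{\pm}, \beta_3^{\pm}$, with the gluing pattern constrained by: (i) each $\alpha_i$ glued to $\alpha_i^{-1}$ and similarly for $\beta$; (ii) along the boundary of $P$, edges alternate between $\alpha$-type and $\beta$-type (since consecutive arcs along the polygon boundary alternate which curve they belong to); (iii) the $\alpha$-arcs concatenate cyclically as $\alpha_1 \to \alpha_2 \to \alpha_3 \to \alpha_1$ and similarly for $\beta$; (iv) all $12$ vertices glue to exactly $3$ points, four per class. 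I would then show that conditions (i)–(iv) together with orientability (each edge glued to an oppositely-oriented copy) are unsatisfiable. The cleanest route is to note that the vertex-identification structure forced by (iii) and the alternation in (ii) — exactly as in the proof of Lemma \ref{lm:2}, where each gluing class of vertices has size $4$ — would require $8g - 4 \equiv 0 \pmod{?}$ type constraints, but the real obstruction for $g = 2$ specifically is more delicate and this is where the main difficulty lies: one genuinely has to examine the finitely many gluing patterns. I expect the main obstacle to be organizing this case analysis cleanly; the permutation-equation count of $48$ square roots $C$ (checking $Q^6C$ has a fixed point or short cycle in each case) is the most foolproof way to make it rigorous, so I would carry out that enumeration, perhaps grouping the $48$ cases by which transposition is paired with $(1, 7)$ — an argument structurally identical to the one used for the upper bound in Section 2.3, where pairing $(1, 4g+1)$ with an even transposition was shown to force $(Q^{4g-2}C)^2(1) = 1$.
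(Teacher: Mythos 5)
Your reduction is sound: a filling pair on $S_2$ with $i(\alpha,\beta)=3$ would, by Lemmas \ref{lm:1}, \ref{lm:2} and \ref{lm:10}, yield a parity-respecting square root $C$ of $Q^{6}\tau=(1,9)(3,11)(5,7)(2,10)(4,12)(6,8)$ with $Q^{6}C$ a $12$-cycle, and your count of $2^{3}\cdot 3!=48$ candidates matches the paper's general formula $2^{2g-1}(2g-1)!$. The problem is that for a nonexistence statement this enumeration \emph{is} the proof, and you never perform it; you only assert that it is "finite and mechanical." Moreover, the Section 2.3 template you invoke does not finish the job on its own: that argument shows that \emph{certain} pairings (those in which the transposition matched with the one containing $1$ is interleaved so that $(Q^{4g-2}C)^{2}(1)=1$) fail to give an $(8g-4)$-cycle, but it says nothing about the remaining pairings, which for $g=2$ must also all be eliminated. (Also, the transposition containing $1$ is $(1,9)$, i.e.\ $(1,4g+1)$ with $g=2$, not $(1,7)$.) Your alternative "topological" route correctly lands on the $12$-gon structure after some false starts, but it likewise terminates in "one genuinely has to examine the finitely many gluing patterns" without examining them. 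So as written this is a proof strategy, not a proof.

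For comparison, the paper avoids the $12$-gon and the permutation equation entirely. It encodes $\alpha\cup\beta$ as a partially directed graph: two disjoint directed $3$-cycles with opposite orientations (the two sides of $\alpha$, vertices being the three intersection points) plus three undirected edges (the arcs of $\beta$). The single-disk-complement condition becomes the existence of a cycle alternating between directed and undirected edges, traversing each directed edge once and each undirected edge once in each direction. Minimal position forces every undirected edge to join the two $3$-cycles (otherwise bigons appear), and up to symmetry this pins down the graph completely, after which one checks in a single short traversal that the required Eulerian-type cycle closes up prematurely. This collapses your $48$ cases to essentially one, which is why the paper's argument fits in a page; if you prefer your permutation route, you should either exhibit the full $48$-case table or find an analogous symmetry reduction before claiming the result.
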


\begin{proof} Suppose $i(\alpha,\beta)=3$. We can schematically represent $\alpha \cup \beta$ as a partially directed graph $G=(V,E)$ with $6$ vertices, $3$ undirected edges and $6$ directed edges as follows. To construct $G$, begin with two disjoint $3$-cycles, one which is oriented clockwise and the other counter-clockwise. Label the vertices on each cycle from $1$ to $3$ in cyclic order. The two directed $3$-cycles represent $\alpha$ on each of its two sides, and the three intersection points between $\alpha$ and $\beta$ correspond to the $3$ vertices on each cycle. 

The remaining $3$ undirected edges of $G$ correspond to the $3$ arcs of $\beta$. If $(\alpha, \beta)$ fill, their complement is a single disk, and this corresponds to the existence of a certain kind of Eulerian cycle. Concretely, there is a cycle $c$ on $G$ beginning with the oriented edge on the clockwise cycle whose initial point is vertex $1$, satisfying the following:

\begin{enumerate}
\item The edges along $c$ alternate between being directed and undirected;
\item every directed edge appears along $c$ exactly once;
\item every undirected edge appears along $c$ exactly once in each direction.
\end{enumerate}

As demonstrated below in Figure $5$, no undirected edge can connect vertices on the same $3$-cycle, or else $\alpha$ and $\beta$ can not be in minimal position. 

Therefore, without loss of generality we can assume that the first undirected edge connects vertex $1$ on the clockwise cycle to vertex $3$ on the counter-clockwise cycle. This will then determine $G$, because vertex $3$ on the clockwise cycle can not then be connected to vertex $1$ on the counter-clockwise cycle, or else $\beta$ will close up without ever passing through vertex $2$. 

Therefore, $G$ must be as pictured in Figure $6$. Note then that $G$ can not satisfy the Eulerian condition described above; starting at vertex $1$ and traveling along the clockwise cycle, one arrives at vertex $2$. Then the undirected edge incident to vertex $2$ on the clockwise cycle is connected to vertex $1$ on the counter-clockwise cycle at its other end. Then we must travel counter-clockwise to vertex $3$ on the counter-clockwise cycle, and finally the undirected edge at this vertex connects back to vertex $1$ on the clockwise cycle. Therefore, no single cycle can satisfy the three necessary requirements. 

\begin{figure}[H]
\centering
	\includegraphics[width=3in]{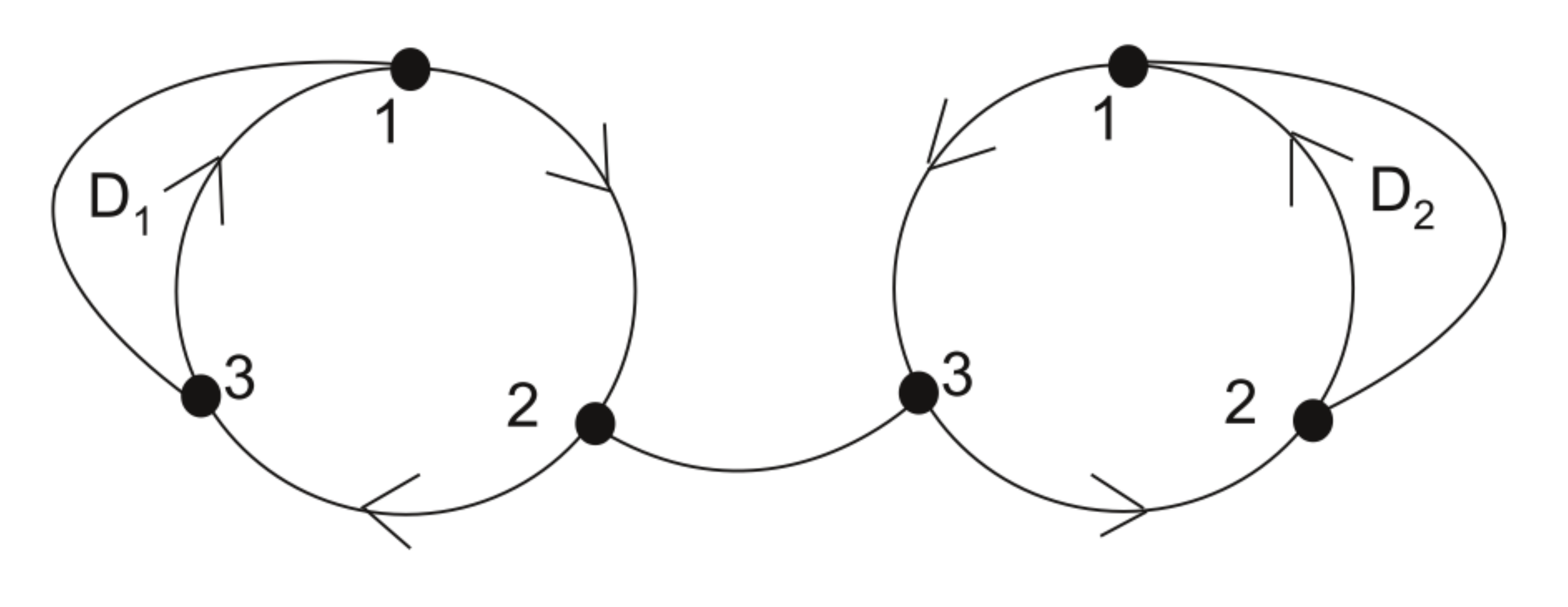}
\caption{If one of the undirected edges connects vertices on the same cycle, there must be at least two undirected edges with this property. This will imply the existence of two bigons labeled $D_{1},D_{2}$ above in the complement of $\alpha\cup \beta$.}
\end{figure}

\begin{figure}[H]
\centering
	\includegraphics[width=3in]{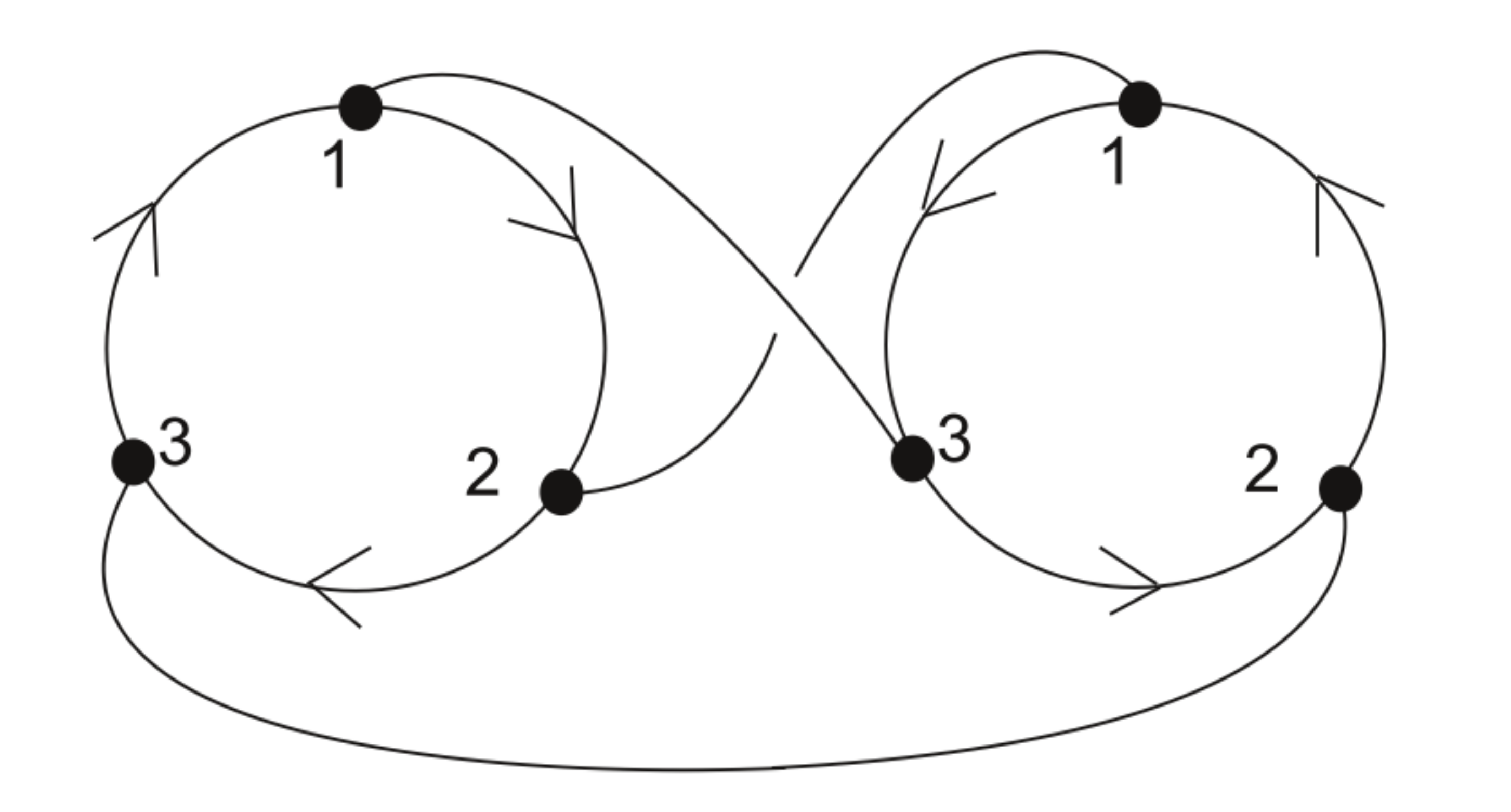}
\caption{When undirected edges connect vertices on opposite cycles, $G$ can not satisfy the Eulerian condition described above.}
\end{figure} 

\end{proof}

\section{ Minimally Intersecting Filling Pairs as Combinatorial Optimizers}

In this section, we consider a potential way of measuring the combinatorial efficiency of a filling pair. Namely, given a filling pair $(\alpha,\beta)$, let $T_{k}(\alpha,\beta)\in \mathbb{N}$ be the number of simple closed curves (up to isotopy) which intersect the union $\alpha \cup \beta$ no more than $k$ times. We expect for an ``optimal'' filling pair to have large values of $T_{k}$ for small values of $k$; in other words, there should be many simple closed curves which only intersect the union a small number of times. 

Specifically, we prove Theorem \ref{thm:2} which shows that minimally intersecting filling pairs optimize the value of $T_{1}$:

\textbf{Theorem 1.2. } 
\textit{Let $\left\{\alpha,\beta\right\}$ be a filling pair on $S_{g}, g>2$ and define $T_{1}(\alpha,\beta) \in \mathbb{N}$ to be the number of simple closed curves intersecting $\alpha \cup \beta$ only once. Then $T_{1}(\alpha,\beta)\leq 4g-2$, with equality if $(\alpha,\beta)$ is minimally intersecting. } 
\vspace{2 mm}

\begin{proof}
Suppose first that $(\alpha,\beta)$ is a minimally intersecting filling pair. Then $T_{1}(\alpha,\beta)= 4g-2$, because for each of the $2g-1$ sub-arcs $\alpha_{k}$ of $\alpha$ (resp. $\beta$), there is a single simple closed curve which intersects $\alpha_{k}$ and no other sub-arc. This simple closed curve corresponds to the arc in the $(8g-4)$-gon obtained by cutting along $\alpha \cup \beta$ which connects the two edges which project down to the sub-arc $\alpha_{k}$. 

Now suppose that $(\alpha,\beta)$ do not intersect minimally, so that $S_{g}\setminus (\alpha \cup \beta)$ has multiple connected components. The number of components is equal to 
\[ i(\alpha,\beta)- 2g+2; \]
thus if we cut along $\alpha \cup \beta$ we obtain a disjoint union of $i(\alpha,\beta)-2g+2$ even-sided polygons $P_{1},...,P_{i(\alpha,\beta)-2g+2}$, such that the total number of edges over all polygons is $4\cdot i(\alpha,\beta)$. 
$T_{1}(\alpha,\beta)$ is then equal to the number of pairs of edges which project to the same arc in $S_{g}$, and which belong to the same polygon. For each $j$, there must be at least one edge of $P_{j}$ which has the property that the inverse edge is not on $P_{j}$ (or else $P_{j}$ will be disconnected from all of the other polygons after glued up). Thus, for each $j$ there must be at least $2$ edges of $P_{j}$ whose inverse edges belong to other polygons, since $P_{j}$ is even-sided for each $j$. 

Suppose there exists $j \in \left\{1,...,i(\alpha,\beta)-2g+2\right\}$ such that there are exactly two edges of $P_{j}$ whose inverse edge is not on $P_{j}$. Then one must project to a sub-arc $\alpha_{k}$ of $\alpha$, and the other to a sub-arc $\beta_{h}$ of $\beta$. Consider the arc $\beta_{l}$ which immediately precedes $\alpha_{k}$ along $P_{j}$, and assume $\beta_{l} \neq \beta_{h}$ (otherwise, the arc immediately preceding $\beta_{h}$ can not be $\alpha_{k}$, so in this case we just switch the roles of $\beta_{h}$ and $\alpha_{k}$). 

By assumption, $(\alpha, \beta)$ is not minimally intersecting, but there is still a map $M$ acting on the edges of $P_{1}\sqcup...\sqcup P_{i(\alpha,\beta)-2g+2}$ playing the same role as the permutation $C$ discussed in section $2.3$ and in the proof of Lemma \ref{lm:2} in section $2.1$. Namely, send an edge $e$  to the edge $e'$ immediately following it in clockwise order along some $P_{i}$, and then send $e'$ to the edge which projects down to the same arc as $e'$ does (i.e., the inverse edge for $e'$). 
 
This map $M$ has order $4$, because it corresponds to a ``rotation'' by $\pi/2$ about an intersection point. Note that $M(\beta_{l})= \alpha_{k}^{-1}$, which by assumption is not  on $P_{j}$. Then $M(\alpha_{k}^{-1})= \beta_{l+1}^{-1}$ (by $\beta_{l+1}$, we simply mean the sub-arc immediately following $\beta_{l}$ in the direction of $\beta_{l}$; see Figure $7$).

\begin{figure}[t]
\centering
	\includegraphics[width=3.5in]{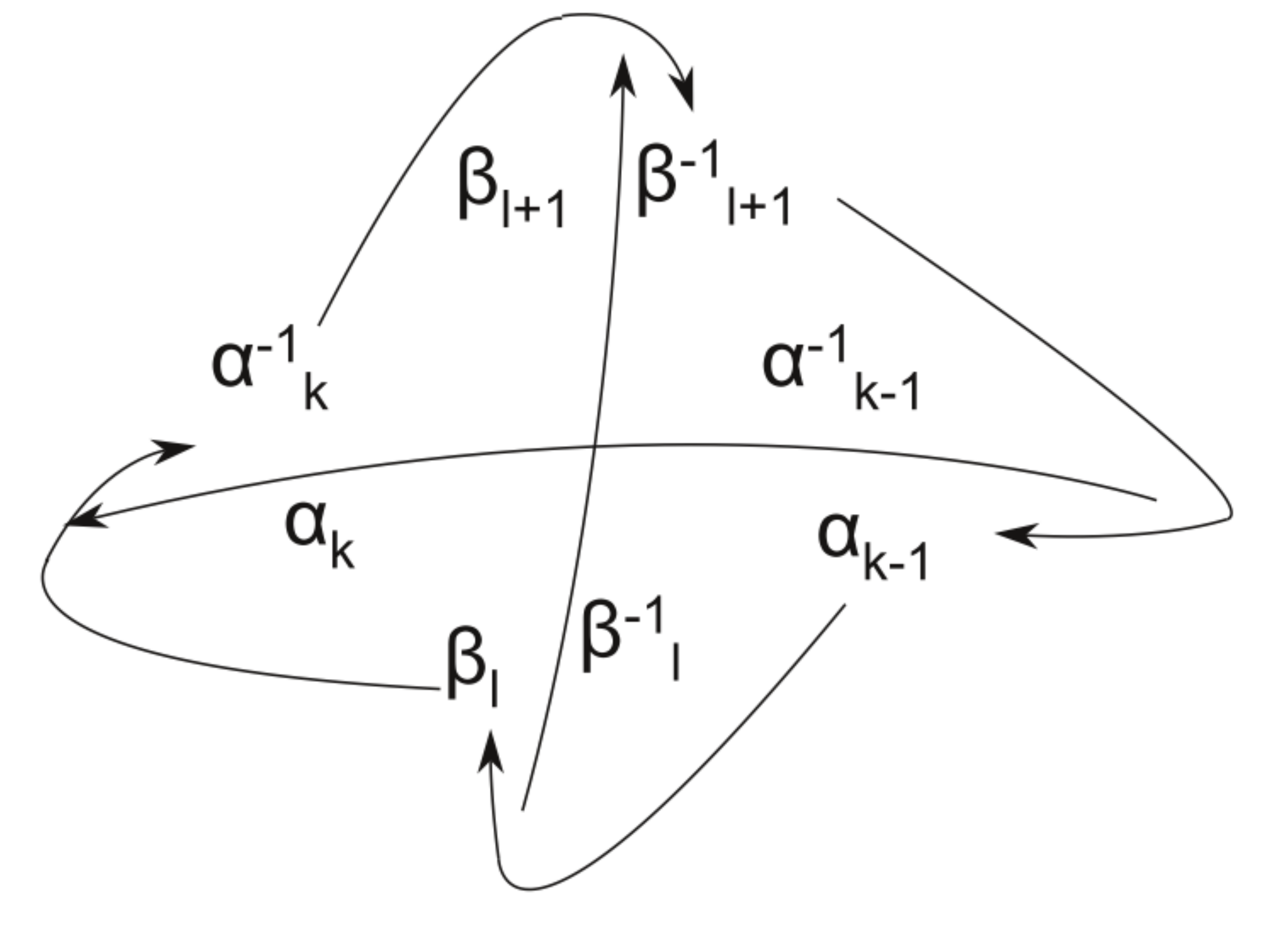}
\caption{Schematic of an intersection point of $\alpha \cup \beta$. Arrows indicate the action of $M$ on each oriented edge.}
\end{figure}

We claim that $\beta_{l+1}= \beta_{h}^{-1}$, for assume not. Then $M(\alpha_{k}^{-1})= \beta_{l+1}^{-1}$ is not on $P_{j}$. $\beta_{l+1}^{-1}$ is immediately followed by $\alpha^{-1}_{k-1}$.  Thus $M(\beta_{l+1}^{-1})= \alpha_{k-1}$, which is also not on $P_{j}$, because otherwise both $\alpha_{k}$ and $\alpha_{k-1}$ would have the property that their inverse edges are not on $P_{j}$. However, $M$ must have order $4$, which implies that $\beta_{l}^{-1}$ immediately follows $\alpha_{k-1}$ (see Figure $7$), and that in particular, the inverse of $\beta_{l}$ is not on $P_{j}$, which contradicts our choice of $\beta_{l}$. 

Therefore, $\beta_{l+1}= \beta_{h}^{-1}$. This in particular implies that $\alpha_{k}^{-1}$ immediately precedes $\beta_{h}^{-1}$ on $P_{m}$ for some $m$. We claim that $\beta_{h}$ must immediately follow $\alpha_{k}$ on $P_{j}$ (see Figure $8$). If not, then let $\beta_{q}$ denote the edge immediately preceding $\alpha_{k}^{-1}$ on $P_{m}$. Then $M(\beta_{q})= \alpha_{k}$ on $P_{j}$, and $M(\alpha_{k})= \beta_{r}$ for some $r$, also on $P_{j}$. Thus $M(\beta_{r})= \alpha_{k+1}^{-1}$, also on $P_{j}$, and since $M$ has order $4$, $\beta_{q}^{-1}$ immediately follows $\alpha_{k+1}^{-1}$ on $P_{j}$, and therefore $\beta_{q}^{-1}= \beta_{h}$. But this is a contradiction, because $\beta_{q}$ and $\beta_{h}^{-1}$ represent distinct edges on $P_{m}$; see Figure $8$.

\begin{figure}[t]
\centering
	\includegraphics[width=3.5in]{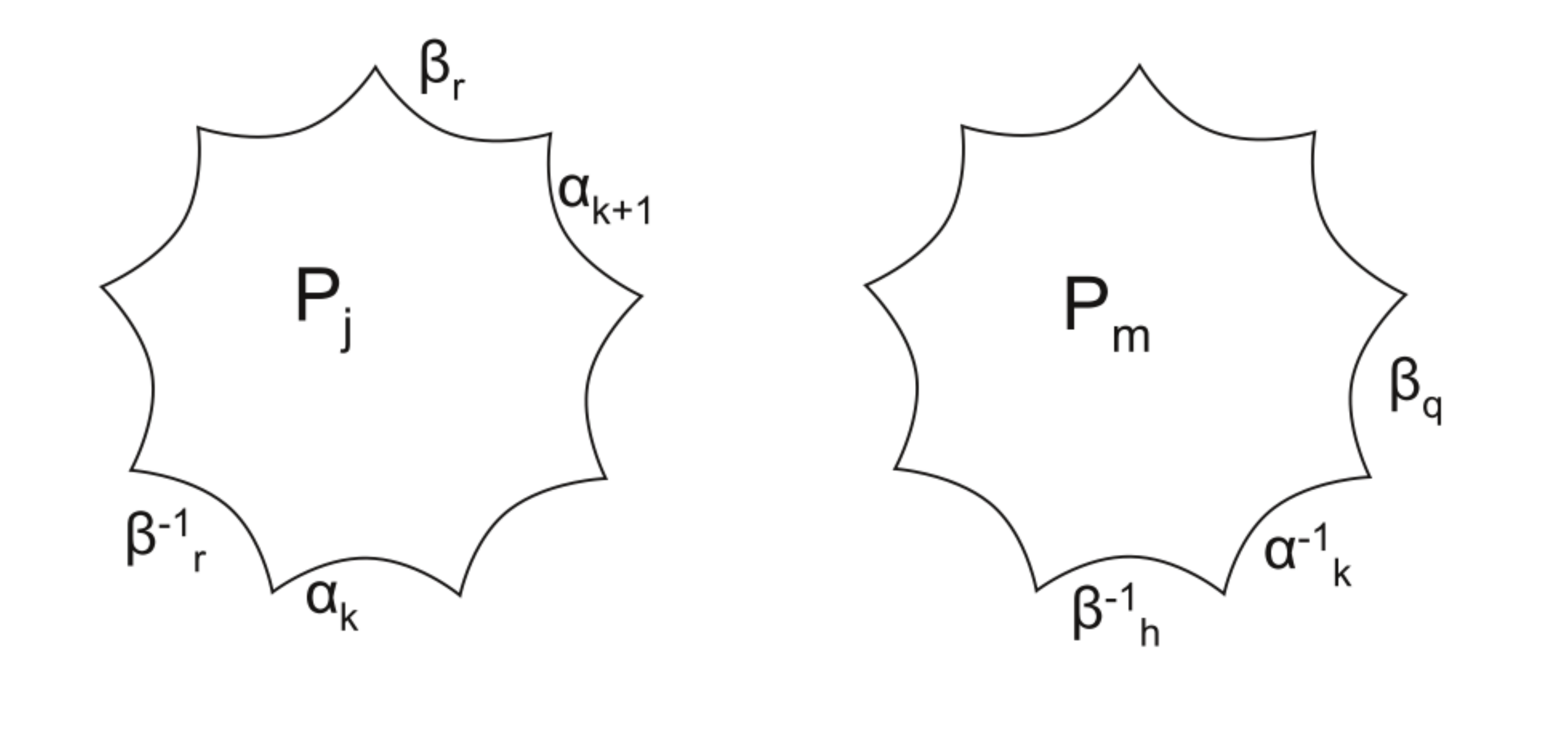}
\caption{Using the fact that $M$ has order $4$, we conclude that $\beta_{q}^{-1}=\beta_{h}$, but this is impossible because $\beta_{q}$ and $\beta_{h}^{-1}$ are distinct edges of $P_{m}$ by assumption.}
\end{figure}

This implies that a sufficiently small neighborhood in $S_{g}$ around the vertex on $P_{j}$ separating $\alpha_{k}$ and $\beta_{j}$ contains only two edges of $\alpha \cup \beta$. This is a contradiction, because $\alpha \cup \beta$ is $4$-valent. Since $j$ was arbitrary, we conclude that each polygon $P_{1},...,P_{i(\alpha,\beta)-2g+2}$ must have at least 4 edges whose inverses are not on the same polygon. This leaves at most 
\[ 4i(\alpha,\beta)- 4(i(\alpha,\beta)-2g+2)= 8g-4\]
edges whose inverse can potentially be on the same polygon, and therefore 
\[ T_{1}(\alpha,\beta)\leq 4g-2.\]
\end{proof}

The example of a filling pair on $S_{2}$ shown in Figure $4$ proves that one can not push this further, in the sense that there exists filling pairs with complementary polygons $P$ containing only $4$ edges whose inverse edge is not on $P$. However, one can say more in the case that there exists a complementary region with a number of edges not dividing $4$:

\begin{corollary} \label{cor:3}
Let $(\alpha,\beta)$ be a filling pair having the property that one complementary region $P$ has a number of sides which is not divisible by $4$. Then $T_{1}(\alpha,\beta)\leq 4g-4$. 

\end{corollary}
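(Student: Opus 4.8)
The plan is to refine the counting argument from the proof of Theorem~\ref{thm:2}. Recall that cutting along $\alpha\cup\beta$ produces polygons $P_{1},\dots,P_{n}$ with $n = i(\alpha,\beta)-2g+2$, each with an even number of sides, and a total of $4\cdot i(\alpha,\beta)$ edges; the quantity $T_{1}(\alpha,\beta)$ counts pairs of edges lying on a common polygon that project to the same sub-arc. In the proof of Theorem~\ref{thm:2} we showed that each $P_{j}$ contributes at least $4$ ``outgoing'' edges (edges whose inverse lies on a different polygon), giving at most $8g-4$ remaining edges and hence $T_{1}(\alpha,\beta)\le 4g-2$. The point of the corollary is that the polygon $P$ with a number of sides not divisible by $4$ must in fact contribute at least $6$ outgoing edges, which lowers the bound by one identified pair, i.e.\ by $2$ edges, yielding $T_{1}(\alpha,\beta)\le 4g-4$.

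First I would record the key divisibility observation: for each polygon $P_{i}$, the number of edges of $P_{i}$ whose inverse also lies on $P_{i}$ is even (those edges are matched in pairs by the involution $e\mapsto e^{-1}$). Since $P_{i}$ has an even number of sides total, the number of outgoing edges of $P_{i}$ is also even. Now apply the map $M$ (rotation by $\pi/2$ about an intersection point) from the proof of Theorem~\ref{thm:2}, which has order $4$ and acts on the edge set of $\bigsqcup P_{i}$. The argument already carried out there shows precisely that if some $P_{j}$ has exactly $2$ outgoing edges one reaches a contradiction with $4$-valence; so every polygon has at least $4$ outgoing edges, and by parity the number of outgoing edges of any polygon is $0\pmod 2$, in fact $\ge 4$ and even. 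The refinement I need is: if $P$ has a number of sides $\not\equiv 0\pmod 4$, then $P$ cannot have exactly $4$ outgoing edges either, forcing at least $6$.

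To see this, suppose for contradiction that $P$ has exactly $4$ outgoing edges. Then the number of edges of $P$ whose inverse is also on $P$ equals $(\text{sides of }P) - 4$, which is $\equiv 0 - 4 \equiv 2 \pmod 4$ by hypothesis, so this number is $\equiv 2\pmod 4$; in particular it is a positive even number not divisible by $4$. These ``internal'' edges are glued in pairs among themselves, and I would analyze how the order-$4$ map $M$ permutes them together with the $4$ outgoing edges. Partition the $4$ outgoing edges according to whether they carry an $\alpha$-label or a $\beta$-label; the analysis in the proof of Theorem~\ref{thm:2} (where the two outgoing edges were shown to have to be one $\alpha$-arc and one $\beta$-arc, with the precise cyclic adjacency forced by $\langle M\rangle$-orbits) should adapt to show that the $M$-orbits meeting the outgoing edges account for a number of internal edges that is divisible by $4$, contradicting the residue $2\pmod 4$ computed above. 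Concretely, each such orbit either is entirely internal (size $4$, contributing $4$ to the internal count) or meets the outgoing set, and a careful bookkeeping of the four outgoing edges via $M$ shows the internal edges they ``see'' come in multiples of $4$; since all internal edges are exhausted this way, $(\text{sides of }P)-4 \equiv 0\pmod 4$, i.e.\ $\text{sides of }P \equiv 0\pmod 4$, a contradiction. Hence $P$ has at least $6$ outgoing edges.

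Granting this, the total count of outgoing edges over all polygons is at least $4(n-1)+6 = 4n+2$, so the number of edges whose inverse lies on the same polygon is at most $4\cdot i(\alpha,\beta) - (4n+2) = (8g-4) - 2 = 8g-6$, and these are matched in pairs, so $T_{1}(\alpha,\beta)\le 4g-3$; since $T_{1}$ counts pairs of edges it is actually bounded by $\lfloor (8g-6)/2\rfloor = 4g-3$, but one more parity observation—that the number of same-polygon identified edges is itself even, being a union of matched pairs—improves $8g-6$ to the statement $T_{1}(\alpha,\beta)\le 4g-4$ once we note the bound $8g-6$ is odd-sensitive and the true ceiling on a pair count forces the even value $4g-4$. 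I expect the main obstacle to be the precise $M$-orbit bookkeeping in the previous paragraph: making rigorous the claim that the internal edges reachable from the four outgoing edges via the order-$4$ map $M$ come in packets of size divisible by $4$, which requires carefully tracking the $\alpha/\beta$ labels and the clockwise-adjacency combinatorics exactly as in Figures~7 and~8, rather than any new idea.
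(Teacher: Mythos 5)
Your overall plan coincides with the paper's (show that the exceptional polygon $P$ must have at least $6$ ``outgoing'' edges, i.e.\ edges whose inverse lies on a different polygon, then recount), but the step that carries all the content is exactly the one you leave unproved. The paper does not argue via $M$-orbits at all: it uses that the sides of $P$ alternate between $\alpha$-arcs and $\beta$-arcs, so if the number of sides is not divisible by $4$ then the number of $\alpha$-edges of $P$ (and likewise of $\beta$-edges) is odd; since $e\mapsto e^{-1}$ matches internal $\alpha$-edges with internal $\alpha$-edges, the number of outgoing $\alpha$-edges of $P$ has this same odd parity, and combining this with the two outgoing $\alpha$-edges and two outgoing $\beta$-edges furnished by the proof of Theorem \ref{thm:2} gives at least $3$ outgoing $\alpha$-edges and at least $3$ outgoing $\beta$-edges, hence at least $6$ in total. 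Your substitute for this --- that the internal edges met by the $\langle M\rangle$-orbits through the four putative outgoing edges come in packets of size divisible by $4$ --- is precisely the claim you would have to prove, you flag it yourself as the main obstacle, and it is doubtful as stated: parity of the total side count alone cannot exclude ``exactly $4$ outgoing,'' because the outgoing edges could split as $1$ $\alpha$-edge and $3$ $\beta$-edges (both odd, consistent with every congruence you write down); what must be excluded is such an odd split, and that requires tracking the $\alpha/\beta$ labels of the outgoing edges, which the paper's alternation argument does and your orbit count does not. Moreover, an $M$-orbit corresponds to an intersection point and in general meets several different polygons, so your dichotomy ``entirely internal to $P$ (size $4$) or meets the outgoing set'' is not available: an orbit can meet $P$ in $1$, $2$, $3$ or $4$ edges, and nothing in your sketch controls this.

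Separately, your endgame does not reach the stated bound even if the $\ge 6$ claim is granted: subtracting the two extra outgoing edges from the total used in Theorem \ref{thm:2} leaves at most $8g-6$ edges that can pair within their own polygon, i.e.\ $T_{1}(\alpha,\beta)\le 4g-3$, and your closing ``parity observation'' is empty --- $8g-6$ is already even, and $4g-3$ is a perfectly admissible number of matched pairs, so nothing forces the drop from $4g-3$ to $4g-4$. The missing unit has to come from the per-curve bookkeeping (counting outgoing $\alpha$-edges and outgoing $\beta$-edges separately, as the paper does) or from a more careful recount of the totals appearing in the proof of Theorem \ref{thm:2}, not from a parity afterthought.
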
 

\begin{proof} The proof of Theorem \ref{thm:2} implies that each complementary region has at least $4$ sides whose inverse edge is not on the same polygon. Furthermore, if there exists a polygon $P$ in the complement of $\alpha \cup \beta$ having a number of edges not divisible by $4$, it follows that the number of edges of $P$ projecting to $\alpha$ is odd, and similarly for $\beta$. Thus, if $\alpha_{k},\alpha_{j}$ are the $2$ $\alpha$-edges of $P$ guaranteed by Theorem \ref{thm:2} to have the property that their inverse edges are not on $P$, the remaining number of $\alpha$-edges on $P$ is still odd, and therefore they can not all glue together. Hence, there are at least $6$ edges of $P$ whose inverse edges are on other polygons, and therefore $T_{1}(\alpha,\beta)\leq 4g-4$.

\end{proof}

It would be interesting to analyze the statistics of $T_{k}(\alpha,\beta)$ as $k$ goes to $\infty$. We pose the following question:

\begin{question} What is the minimal $k$ such that $T_{k}$ is not maximized by a minimally intersecting filling pair?
\end{question}

\section{Minimal Length Filling Pairs on Hyperbolic Surfaces}
In this section, we prove Theorem \ref{thm:3}. In all that follows, let $l_{h}$ denote  length in $\mathbb{H}^{2}$, the hyperbolic plane. As in the introduction, define $\mathcal{M}_{g}$ to be the \textit{Moduli Space} of $S_{g}$, the space of all complete hyperbolic metrics on $S_{g}$ up to isometry. Then  $\mathcal{F}_{g}:\mathcal{M}(S_{g})\rightarrow \mathbb{R}$ is the function which outputs the length of the shortest minimally intersecting filling pair on a hyperbolic surface $\sigma$, where the length of a filling pair is the sum of the two individual lengths of the unique geodesic representatives of the simple closed curves in the pair. 

In recalling the definition of a topological Morse function, we follow \cite{Schal2}:

Let $f: \mathcal{M}_{g} \rightarrow \mathbb{R}$ be a continuous function. Then $x \in \mathcal{M}_{g}$ is called an \textit{ordinary point} of $f$ if there exists an open neighborhood $U$ of $x$, a chart $\phi: U \rightarrow \mathbb{R}^{6g-6}$ inducing real-valued coordinates $y_{1},...,y_{6g-6}$, and an integer $j$, $1 \leq j \leq 6g-6$ such that for any $z \in U$, 

\[ f(z) = y_{j}.\]

Otherwise, $x$ is a \textit{critical point}. If $x$ is a critical point, $x$ is called a \textit{non-degenerate critical point} of $f$ if there exists an open neighborhood $U$ of $x$, a chart $\phi: U \rightarrow \mathbb{R}^{6g-6}$ inducing real-valued coordinates $y_{1},...,y_{6g-6}$, and an integer $j$, $1\leq j \leq 6g-6$ such that for any $z \in U$, 

\[ f(z)-f(x) =\sum_{i=1}^{j}y_{i}^{2}- \sum_{i=j+1}^{6g-6}y_{i}^{2}.\]

$j$ is called the \textit{index} of the non-degenerate critical point $x$. For our purposes, a \textit{topological Morse function} is a real-valued continuous function $f$ on $\mathcal{M}_{g}$ such that $f$ has only finitely many critical points, all of which are non-degenerate.

Theorem \ref{thm:3} says that $\mathcal{F}_{g}$ is a topological Morse function, and that the number of critical points of index $0$ (i.e., the global minima) grow at least exponentially in genus:

\textbf{Theorem 1.3. }
Let $g\geq 3$. \textit{$\mathcal{F}_{g}$ is proper and a topological Morse function. For any $\sigma \in \mathcal{M}_{g}$, 
\[ \mathcal{F}_{g}(\sigma)\geq \frac{m_{g}}{2},\]
where 
\[m_{g}= (8g-4)\cdot \cosh^{-1}\left( 2 \left[\cos\left(\frac{2\pi}{8g-4}\right)+\frac{1}{2}\right] \right)  \]
denotes the perimeter of a regular, right-angled $(8g-4)$-gon. Furthermore, define 
\[ \mathcal{B}_{g}:= \left\{\sigma \in \mathcal{M}_{g} : \mathcal{F}_{g}(\sigma) = \frac{m_{g}}{2}\right\};\]
then $\mathcal{B}_{g}$ is finite and grows exponentially in $g$. If $\sigma\in \mathcal{B}_{g}$, the injectivity radius of $\sigma$ is at least  $\frac{1}{2} \cosh^{-1}\left(\frac{9}{\sqrt{73}}\right)$ }.

\begin{proof}
 Let $C \subset \mathbb{R}$ be a compact subset. Then we claim that there exists $\epsilon>0$ such that $\mathcal{F}_{g}^{-1}(C)$ is contained in $\mathcal{M}^{\epsilon}_{g}$, the $\epsilon$-\textit{thick part} of moduli space, defined to be the subset of $\mathcal{M}_{g}$ consisting of hyperbolic surfaces with injectivity radius at least $\epsilon$. To see this, note that this is implied by the statement $\mathcal{F}_{g}\rightarrow \infty$ as injectivity radius approaches $0$.

\begin{lemma} \label{lm:11}
For $k \in \mathbb{N}$, there exists $\epsilon=\epsilon(k)$ such that any hyperbolic surface $\sigma$ with injectivity radius $<\epsilon$ satisfies $\mathcal{F}_{g}(\sigma)>k$. 
\end{lemma}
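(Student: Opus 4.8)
The plan is to argue that if $\sigma$ has a very short closed geodesic $\gamma$, then any minimally intersecting filling pair $(\alpha,\beta)$ must cross $\gamma$, and each such crossing forces a definite amount of length via the collar lemma; since the pair must actually fill, the number of crossings with $\gamma$ cannot be too small, and as the length of $\gamma$ shrinks the width of its embedded collar blows up, driving $\mathcal{F}_g(\sigma)$ to infinity. Concretely, first I would invoke the collar lemma: a simple closed geodesic $\gamma$ of length $\ell$ has an embedded collar $C(\gamma)$ of half-width $w(\ell) = \sinh^{-1}(1/\sinh(\ell/2))$, and $w(\ell) \to \infty$ as $\ell \to 0$. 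Any arc crossing $\gamma$ transversally must traverse this collar and hence has length at least $2w(\ell)$ (or at least $w(\ell)$ if it starts or ends inside the collar).

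The second step is to show that a filling pair must intersect $\gamma$: if $\alpha \cup \beta$ were disjoint from $\gamma$, then $\gamma$ would lie in a complementary region of $\alpha \cup \beta$, contradicting that these regions are topological disks (a disk contains no essential closed curve). So $i(\alpha,\beta\,;\gamma) := i(\alpha,\gamma) + i(\beta,\gamma) \geq 1$, and in fact, since $\alpha$ and $\beta$ are themselves closed curves crossing $\gamma$, each contributes an even number of crossings or zero, so the total number of geometric crossings of $\alpha \cup \beta$ with $\gamma$ is at least $2$. Each crossing point is the midpoint of a subarc of $\alpha$ or $\beta$ that runs fully across the collar, so $\ell_\sigma(\alpha) + \ell_\sigma(\beta) \geq (\text{number of crossings}) \cdot w(\ell) \geq 2\,w(\ell)$. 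Hence $\mathcal{F}_g(\sigma) \geq 2\,w(\ell) \geq 2\,\sinh^{-1}(1/\sinh(\ell/2))$, and given $k$ we simply choose $\epsilon$ small enough that $\ell < 2\epsilon$ forces $2\,\sinh^{-1}(1/\sinh(\ell/2)) > k$; then any $\sigma$ with injectivity radius $< \epsilon$ (so with a closed geodesic of length $< 2\epsilon$) satisfies $\mathcal{F}_g(\sigma) > k$.

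One technical point I would be careful about: the definition of $\mathcal{F}_g$ takes the \emph{shortest} minimally intersecting filling pair, so the bound must hold uniformly over \emph{all} minimally intersecting filling pairs on $\sigma$, which the argument above does since it only used that $(\alpha,\beta)$ fills and is in minimal position with itself — it applies verbatim to every filling pair. I should also note that $\alpha$ and $\beta$ can be isotoped to their geodesic representatives without increasing length while remaining filling and in minimal position with $\gamma$, so the collar estimate applies to the geodesic representatives whose lengths are exactly what $\mathcal{F}_g$ measures. The main obstacle — and it is mild — is making sure the counting of collar crossings is honest: a subarc of $\alpha$ could in principle enter and exit the collar through the \emph{same} boundary component without crossing $\gamma$, contributing length without contributing a crossing, but this only helps the inequality; conversely every genuine crossing of $\gamma$ does cost a full collar traversal of length $\geq 2w(\ell)$, and there is at least one such crossing, which already suffices. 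So the estimate $\mathcal{F}_g(\sigma) \geq 2\,w(\ell_{\mathrm{sys}}(\sigma))$ holds, and properness of $\mathcal{F}_g$ on each $C$ follows, completing the reduction to the thick part.
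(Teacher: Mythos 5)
Your proof is correct and follows essentially the same route as the paper: the collar lemma gives an embedded collar about a short geodesic $\gamma$ whose width blows up as $\ell(\gamma)\to 0$, and since the complementary regions of a filling pair are disks, the pair must cross $\gamma$ and hence traverse the collar. (One small inaccuracy: for a non-separating $\gamma$ the intersection number $i(\alpha,\gamma)$ need not be even, so your claimed lower bound of two crossings can fail --- but, as you yourself note, a single crossing already suffices.)
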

\begin{proof} By the collar lemma (see \cite{Bus}), there exists a function $c: \mathbb{R}\rightarrow \mathbb{R}$ such that $\lim_{x\rightarrow 0}c(x)= \infty$ such that if $\sigma$ is a hyperbolic surface with a closed geodesic $\gamma$ of length $<2\epsilon$, there exists an embedded tubular neighborhood around $\gamma$ whose width is at least $c(\epsilon)$. Since any filling pair must intersect $\gamma$ at least once, the result follows.  
\end{proof}

Therefore $\mathcal{F}_{g}^{-1}(C)$ is contained in the thick part of $\mathcal{M}_{g}$, which is compact by a result of Mumford \cite{Mum}; then continuity of $\mathcal{F}_{g}^{-1}$ implies properness.

To show that $\mathcal{F}_{g}$ is a topological Morse function, we use the following criterion, due to Akrout \cite{Akro}:

Let $(l_{u})_{u\in \mathcal{C}}$ be a collection of real-valued smooth functions defined over $\mathcal{M}_{g}$ and indexed by some countable set $\mathcal{C}$. Given $x \in \mathcal{M}_{g}$ and $\lambda \in \mathbb{R}$, define 
\[ \mathcal{C}_{x}^{\leq \lambda}:= \left\{u \in \mathcal{C}: l_{u}(x)\leq \lambda \right\}.\]
Then the function $\rho:= \inf_{u\in \mathcal{C}}l_{u}$ is called a \textit{generalized systole function} if for each $x \in \mathcal{M}_{g}$ and each $\lambda \in \mathbb{R}$, there exists a neighborhood $W$ of $x$ such that $\bigcup_{q\in W}\mathcal{C}_{q}^{\leq \lambda}$ is a finite subset of $\mathcal{C}$. 

Akrout shows that any generalized systole function is a topological Morse function.  Define $\mathcal{C}$ to be the collection of minimally intersecting filling pairs (not taken up to homeomorphism), and for each such pair $u\in \mathcal{C}$, let $l_{u}(\sigma)$ denote the length of the filling pair $u$ on the hyperbolic surface $\sigma$. Then $\mathcal{F}_{g}= \inf_{u \in \mathcal{C}}l_{u}$.

Fix $x\in \mathcal{M}_{g}$ and $\lambda >0$; then by the collar lemma, only finitely many pairs in $\mathcal{C}$ can have length less than $\lambda$ on $x$. Let $s$ be (one of) the shortest simple closed geodesics on $x$, and let $l_{s}(x)$ denote the length of $s$ on $x$. Let $U$ be a sufficiently small neighborhood around $x$ so that for any $y\in U$, $|l_{s}(y)-l_{s}(x)|<\epsilon$, for some small $\epsilon$. 

Thus any hyperbolic surface in $U$ has the property that $s$ has length at most $l_{s}(x)+\epsilon$, so by another application of the collar lemma, there can only be finitely many filling pairs having the property that there exists a point in $U$ on which the pair has length at most $\lambda$. Therefore $\mathcal{F}_{g}$ is a topological Morse function.

Next, we identify the minima $\mathcal{B}_{g}$ of $\mathcal{F}_{g}$. Recall that given a minimally intersecting filling pair $(\alpha,\beta)$ on a hyperbolic surface $\sigma$, cutting along $\alpha \cup \beta$ produces a single hyperbolic $(8g-4)$-gon $P_{\sigma}$. 

Fix $n \in \mathbb{N}$, $n\geq 3$, and let $\lambda>0$ be such that there exists a hyperbolic $n$-gon with area $\lambda$ (that is, $\lambda$ is less than the area of an ideal $n$-gon). Then by a result of Bezdek \cite{Bez}, the hyperbolic $n$-gon enclosing an area of $\lambda$ with the smallest perimeter is the regular $n$-gon with area $\lambda$. Therefore, the perimeter of $P$ is at least as large as the perimeter of a regular $(8g-4)$-gon enclosing an area of $2\pi(2g-2)$ (the area of $\sigma$, by Gauss-Bonnet). The regular $(8g-4)$-gon with area $2\pi(2g-2)$ is right-angled. The perimeter of such a polygon is $m_{g}$, and its perimeter is twice the length of the filling pair.

Now we demonstrate a uniform lower bound on the injectivity radius for any $\sigma \in \mathcal{B}_{g}$. We identify a simple closed geodesic $\gamma$ on $\sigma$ with its lift $\tilde{\gamma}$ in the right-angled regular $(8g-4)$-gon $P_{\sigma}$ that one obtains by cutting along the minimally intersecting filling pair $(\alpha,\beta)$ of minimal length $m_{g}/2$ on $\sigma$; $\tilde{\gamma}$ is a disjoint collection of geodesic arcs connecting edges of $P_{\sigma}$. 

Assume first that every arc in $\tilde{\gamma}$ connects adjacent edges of $P_{\sigma}$. Since $\gamma, \alpha,\beta$ are all geodesics, they are in pairwise minimal position, and therefore no two of them form a bigon on the surface. 

If $\tilde{\gamma}$ contains some arc $x$ with one endpoint on some edge $a$ of $P_{\sigma}$ belonging to $\alpha$, and the other endpoint on an adjacent edge $b$ belonging to $\beta$, there must be some arc $y$ of $\tilde{\gamma}$ with an endpoint on the edge of $P_{\sigma}$ corresponding to $b^{-1}$. By the previous paragraph, the other endpoint of $y$ can not be located on the edge $c$ belonging to $\alpha$ immediately following $a$ (see Figure $9$). 

\begin{figure}[t]
\centering
	\includegraphics[width=3.5in]{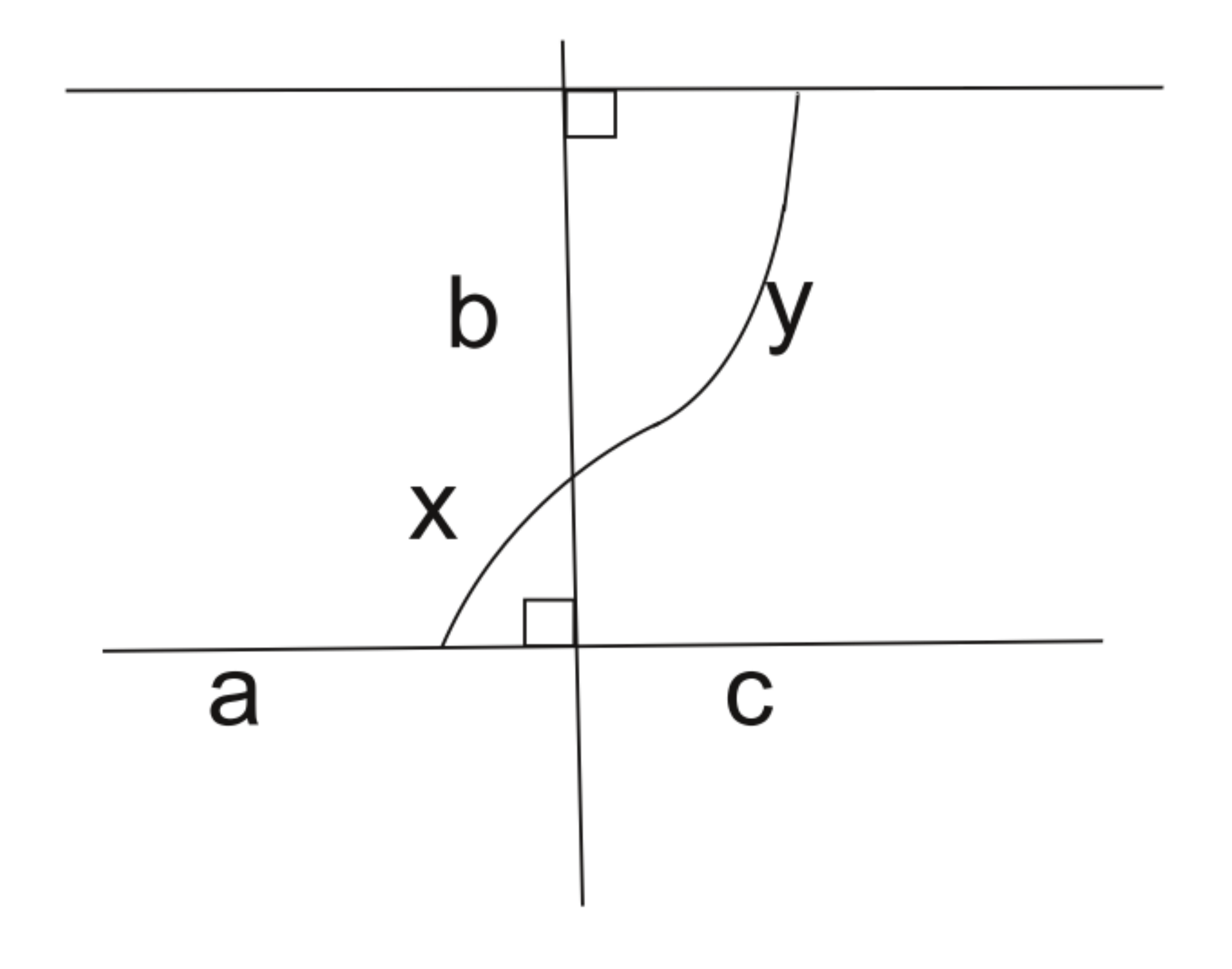}
\caption{If the right endpoint of $y$ is on $c$, $\gamma$ and $\alpha$ can not be in minimal position. }
\end{figure}

But then the arc of $\gamma$ obtained by concatenating $x$ and $y$ is longer than the edge $b$, because $P_{\sigma}$ is right-angled. All edges of $P_{\sigma}$ have length 
\[ \frac{m_{g}}{8g-4}= \cosh^{-1}\left( 2 \left[\cos\left(\frac{2\pi}{8g-4}\right)+\frac{1}{2}\right] \right)  \]

For $g\geq 3$, this is an increasing function of $g$, and therefore 
\[ \frac{m_{g}}{8g-4} > \frac{m_{3}}{20} =  \cosh^{-1}\left(2 \left[\frac{1}{2}+ \sqrt{ \frac{5}{8}+\frac{\sqrt{5}}{8}} \right] \right) > \cosh^{-1}\left(\frac{9}{\sqrt{73}}\right).\]

Therefore, we can assume that there exists an arc $r$ in $\tilde{\gamma}$ whose endpoints are not on adjacent edges of $P_{\sigma}$. Let $e$ be an edge of $P_{\sigma}$ containing an endpoint of $r$. Let $u,v$ denote the endpoints of the two edges adjacent to $e$, which are not shared endpoints with $e$, and let $x$ denote the geodesic segment in $P_{\sigma}$ orthogonal to both $e$ and the geodesic connecting $u$ to $v$ (see Figure $10$). 

\begin{figure}[t]
\centering
	\includegraphics[width=3.5in]{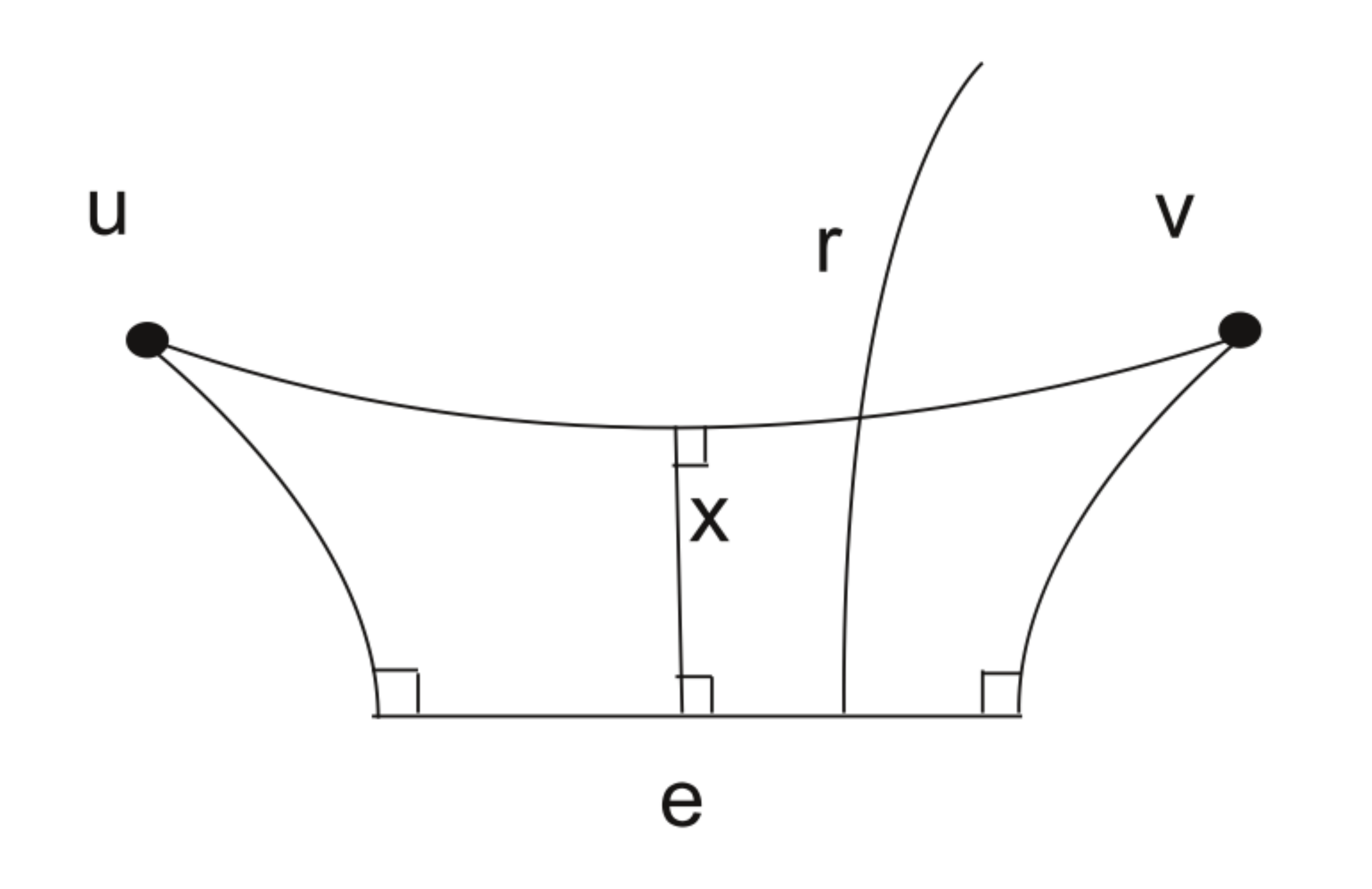}
\caption{$r$ must cross the geodesic connecting $u$ to $v$.}
\end{figure}

We claim that the geodesic segment $g$ connecting $u$ to $v$ must seperate $e$ and its two neighboring edges from the rest of $P_{\sigma}$. Indeed, if any part of $\partial P_{\sigma}$ crosses $g$, then $g$ is not contained in $P_{\sigma}$, which contradicts the fact that $P_{\sigma}$ is convex.   Thus, since by assumption the other endpoint of $r$ is not on an edge adjacent to $e$, $r$ must cross $g$, and therefore its length is at least the length of $x$.  

Computing the length of $x$, which we denote by $\lambda(g)$, is an exercise in elementary hyperbolic geometry- see Theorem $2.3.1$ on page $38$ of \cite{Bus}:

\[ \lambda(g) = \cosh^{-1}\left( \frac{1+ 2\cos\left(\frac{\pi}{2-4g} \right)}{ \sqrt{4 \cos\left(\frac{\pi}{2-4g} \right) \left( 1 +\cos\left(\frac{\pi}{2-4g} \right) \right) + \frac{1}{\left(1+ 2\cos\left(\frac{\pi}{2-4g} \right)\right)^{2}}    } } \right).\]

This is a decreasing function of $g$, and 
\[\lim_{g\rightarrow \infty} \lambda(g) = \cosh^{-1}\left(\frac{9}{\sqrt{73}} \right).\]

This proves that any simple closed geodesic on $\sigma$ has length at least $\cosh^{-1}\left(\frac{9}{\sqrt{73}} \right)$, and therefore the injectivity radius is at least half of this quantity. 

Finally, we adress the growth of $|\mathcal{B}_{g}|$. Properness of $\mathcal{F}_{g}$ implies that all minima are contained in a compact subset of $\mathcal{M}_{g}$, and the fact that $\mathcal{F}_{g}$ is a topological Morse function implies that the minima are isolated. Therefore, $|\mathcal{B}_{g}|<\infty$.

It remains to show that $\mathcal{B}_{g}$ grows exponentially as a function of $g$. Each $\mbox{Mod}(S_{g})$ orbit of minimally intersecting filling pairs can be associated to an element of $\mathcal{B}_{g}$ by simply gluing the edges of a regular right-angled $(8g-4)$-gon together in accordance with the chosen orbit. However, this association need not be injective;  a priori, it may be the case that many minimal length, minimally intersecting filling pairs occur on the same hyperbolic surface. 

Suppose $\sigma$ is a hyperbolic surface admitting $r$ minimal length, minimally intersecting filling pairs. Thus there are $r$ immersed right-angled $(8g-4)$-gons on $\sigma$, and developing $\sigma$ into $\mathbb{H}^{2}$, these polygons lift to $r$ distinct tilings $T_{1},...,T_{r}$ of $\mathbb{H}^{2}$ by regular, right-angled $(8g-4)$-gons.

Let $\Gamma$ denote the holonomy of this developing map; i.e., $\Gamma$ is the discrete subgroup of $PSL_{2}(\mathbb{R})$ identified with the fundamental group of $\sigma$. Recall that the \textit{commensurator} of $\Gamma$, denoted $\mbox{comm}(\Gamma)$, is the subgroup of $\mbox{Isom}(\mathbb{H}^{2})$ consisting of elements $g$ such that $g\Gamma g^{-1}$ is \textit{commensurable} with $\Gamma$, which is to say that $g\Gamma g^{-1} \cap \Gamma$ is a finite index subgroup of $\Gamma$ and $g\Gamma g^{-1}$. Note that $\Gamma < \mbox{comm}(\Gamma)$. 

Let $\Lambda_{i}$ denote the full isometry group of the tiling $T_{i}$; note that $\Gamma<T_{i}$ for each $i$, and $\Gamma$ acts freely and transitively on the set of tiles. Thus $\Gamma$ has finite index in $T_{i}$, and therefore $\mbox{comm}(\Gamma)= \mbox{comm}(\Lambda_{i})$. 

\begin{lemma} \label{lm:12}
$r \leq [\mbox{comm}(\Lambda_{1}): \Lambda_{1}]$. 
\end{lemma}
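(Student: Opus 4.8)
The plan is to bound $r$ by injecting the set of the $r$ tilings $T_1,\dots,T_r$ into the coset space $\mathrm{comm}(\Lambda_1)/\Lambda_1$. The key point is that all these tilings share the same symmetry group up to conjugacy: since $\Gamma$ acts freely and transitively on the tiles of each $T_i$, and $\Gamma < \Lambda_i$ with finite index, we established $\mathrm{comm}(\Gamma) = \mathrm{comm}(\Lambda_i)$ for every $i$. So all $\Lambda_i$ are commensurable and live inside the common group $G := \mathrm{comm}(\Gamma)$.

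First I would show that for each $i$ there is an element $g_i \in G$ with $g_i \Lambda_1 g_i^{-1} = \Lambda_i$, and more precisely $g_i \cdot T_1 = T_i$ as tilings. To see this, note that a right-angled regular $(8g-4)$-gon is unique up to isometry of $\mathbb{H}^2$, and the tiling it generates by reflections/rotations is determined by placing one tile; hence for any two tiles (one in $T_1$, one in $T_i$) there is an isometry of $\mathbb{H}^2$ carrying the first tiling to the second. The issue is that an arbitrary such isometry need not lie in $G = \mathrm{comm}(\Gamma)$. This is where the hypothesis that all $r$ tilings descend to the \emph{same} surface $\sigma = \mathbb{H}^2/\Gamma$ is used: because each $T_i$ is $\Gamma$-invariant (the polygon is immersed in $\sigma$, so its lift is preserved by the deck group), we have $\Gamma < \Lambda_i$, and since $\Gamma < \Lambda_1$ as well with finite index on both sides, the conjugating element $g_i$ satisfies $g_i \Gamma g_i^{-1} < \Lambda_i$ is commensurable with $\Gamma$ — one checks $g_i \Gamma g_i^{-1} \cap \Gamma$ contains $\Gamma \cap \Lambda_i \cap g_i\Gamma g_i^{-1}$, which has finite index. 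Hence $g_i \in \mathrm{comm}(\Gamma) = G$.

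Next I would check that the assignment $i \mapsto g_i \Lambda_1 \in G/\Lambda_1$ is well-defined and injective. Well-definedness: if $g_i$ and $g_i'$ both carry $T_1$ to $T_i$, then $(g_i')^{-1} g_i$ preserves $T_1$, hence lies in $\Lambda_1$, so $g_i \Lambda_1 = g_i' \Lambda_1$. Injectivity: if $g_i \Lambda_1 = g_j \Lambda_1$ then $g_j^{-1} g_i \in \Lambda_1$ preserves $T_1$, and applying $g_j$ we get $T_i = g_i T_1 = g_j T_1 = T_j$, so the tilings coincide, hence $i = j$. Therefore the $r$ tilings inject into $G/\Lambda_1$, giving $r \leq [G : \Lambda_1] = [\mathrm{comm}(\Gamma):\Lambda_1] = [\mathrm{comm}(\Lambda_1) : \Lambda_1]$, as desired.

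The main obstacle I anticipate is the second step: verifying carefully that the conjugating isometry $g_i$ between two $\Gamma$-invariant tilings must land in the commensurator rather than merely in $\mathrm{Isom}(\mathbb{H}^2)$. The cleanest way is to argue via finite index: $\Gamma$ and $g_i \Gamma g_i^{-1}$ both sit inside $\Lambda_i$ as finite-index subgroups (finite index because $\Gamma$ is cocompact and $\Lambda_i$ is discrete — $\Lambda_i$ is discrete since it is the symmetry group of a locally finite tiling), so their intersection has finite index in each, which is exactly the statement $g_i \in \mathrm{comm}(\Gamma)$. Once this discreteness-and-finite-index bookkeeping is in place, the counting argument is formal.
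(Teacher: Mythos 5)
Your argument is correct and is essentially the paper's own proof: both pick isometries $g_{i}$ carrying $T_{1}$ to $T_{i}$, show they lie in the commensurator via finite-index/commensurability bookkeeping involving $\Gamma$ (the paper uses $\Lambda_{i}\cap\Lambda_{1}\supseteq\Gamma$ of finite index in each, you use $\Gamma$ and $g_{i}\Gamma g_{i}^{-1}$ inside the discrete group $\Lambda_{i}$ — the same thing, since $\mbox{comm}(\Gamma)=\mbox{comm}(\Lambda_{1})$ was already noted), and then count distinct cosets $g_{i}\Lambda_{1}$ to get $r\leq[\mbox{comm}(\Lambda_{1}):\Lambda_{1}]$. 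Your additional remarks (existence of $g_{i}$ from rigidity of the $\{8g-4,4\}$ tiling, discreteness of $\Lambda_{i}$ from local finiteness) only make explicit details the paper leaves implicit.
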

\begin{proof} For each $k \in \left\{1,...,r\right\}$, let $g_{k} \in \mbox{Isom}(\mathbb{H}^{2})$ be an element sending $T_{1}$ to $T_{k}$. Then $g_{k} \Lambda_{1} g_{k}^{-1}$ is the group of isometries of $T_{k}$, and by assumption this is also $\Lambda_{k}$; $\Lambda_{k} \cap \Lambda_{1}$ contains $\Gamma$ which is finite index in both, and therefore $g_{k} \in \mbox{comm}(\Lambda_{1})$. 

Suppose for $j\neq k$ that $g_{k}, g_{j}$ are in the same coset of $\Lambda_{1}$ in $\mbox{comm}(\Lambda_{1})$. Then $g_{j}^{-1}g_{k}$ fixes $T_{1}$. Thus $g_{j}^{-1}$ sends $T_{k}$ to $T_{1}$, but by definition it also sends $T_{j}$ to $T_{1}$, a contradiction unless $T_{k}= T_{j}$ or $g_{j}= g_{k}$, either of which implies the other, and that $j=k$. 
\end{proof}

\begin{lemma} \label{lm:13}
For $g$ sufficiently large, $\mbox{comm}(\Lambda_{1})$ is a discrete subgroup of $\mbox{Isom}(\mathbb{H}^{2})$. 
\end{lemma}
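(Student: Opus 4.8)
The plan is to exploit the classical dichotomy for lattices in $\mathrm{Isom}(\mathbb{H}^2)$: the commensurator of a lattice $\Lambda$ is either discrete (in which case it is itself a lattice, the unique maximal lattice containing $\Lambda$ up to commensurability) or dense in $\mathrm{Isom}(\mathbb{H}^2)$, and the latter happens precisely when $\Lambda$ is \emph{arithmetic} (Margulis's characterization of arithmeticity). Since $\Lambda_1$ is the symmetry group of a tiling of $\mathbb{H}^2$ by regular right-angled $(8g-4)$-gons, it contains, and is commensurable with, the $(2,4,8g-4)$ triangle group $\Delta_g$ (subdivide each polygon by the perpendiculars from the center to the edges and to the vertices). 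So it suffices to show that $\Delta_g$ is non-arithmetic for $g$ large, and then invoke the discreteness half of the dichotomy for $\mathrm{comm}(\Lambda_1)=\mathrm{comm}(\Delta_g)$.

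The steps, in order: First, reduce to the triangle group as above, noting $\mathrm{comm}(\Lambda_1)=\mathrm{comm}(\Delta_g)$ since commensurable lattices have equal commensurators. Second, recall Takeuchi's classification \cite{Tak} of arithmetic triangle groups: there are only finitely many arithmetic Fuchsian triangle groups $(p,q,r)$, and Takeuchi lists them all explicitly; in particular the list is finite, so there is a bound $B$ with the property that $(2,4,r)$ is non-arithmetic for all $r>B$. Hence for $g$ large enough that $8g-4>B$, $\Delta_g$ is non-arithmetic. Third, apply Margulis: a non-arithmetic lattice $\Lambda$ in a semisimple Lie group of rank one has $[\mathrm{comm}(\Lambda):\Lambda]<\infty$, and in particular $\mathrm{comm}(\Lambda)$ is again a lattice, hence discrete. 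Applying this to $\Lambda=\Delta_g$ gives that $\mathrm{comm}(\Delta_g)=\mathrm{comm}(\Lambda_1)$ is discrete, as desired.

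The main obstacle is simply importing the right black boxes cleanly: one needs Takeuchi's finiteness theorem for arithmetic triangle groups to pin down that $(2,4,8g-4)$ eventually leaves the finite arithmetic list, and one needs the Margulis commensurator criterion (equivalently the statement that a maximal lattice containing a non-arithmetic lattice exists and has finite index over it) to conclude discreteness. Neither requires new computation; the only mild care is in step one, checking that $\Delta_g$ really is commensurable with $\Lambda_1$ — this is standard since both stabilize the same tiling up to finite index, the barycentric subdivision of the right-angled $(8g-4)$-gon tiling being a tiling by $(2,4,8g-4)$-triangles invariant under both. An alternative to Takeuchi, if one prefers to avoid citing the full classification, is to observe that the invariant trace field of the $(2,4,r)$ group grows in degree with $r$ while an arithmetic Fuchsian group's invariant trace field must be totally real of bounded degree once the coarea is bounded in terms of the degree — but citing Takeuchi is cleanest.
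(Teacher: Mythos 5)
Your proposal is correct and follows essentially the same route as the paper: subdivide the right-angled $(8g-4)$-gon tiling to see that $\Lambda_{1}$ is commensurable with a hyperbolic triangle group, invoke the finiteness of arithmetic triangle groups (Takeuchi, also Long--Maclachlan--Reid) to conclude non-arithmeticity for large $g$, and apply Margulis's commensurator criterion to get discreteness of $\mbox{comm}(\Lambda_{1})=\mbox{comm}(\Delta_{g})$. The only cosmetic difference is the labeling of the triangle group (your $(2,4,8g-4)$ matches the barycentric subdivision; the paper states the angles the same way but records a doubled signature), which does not affect the argument.
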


\begin{proof} A fundamental polygon for the tiling $T_{1}$ can be subdivided into $16g-8$ copies of a hyperbolic triangle with angles $\frac{2\pi}{16g-8}, \frac{\pi}{4}, \frac{\pi}{2}$. As a consequence, $\Lambda_{1}$ is commensurable with the triangle group of signature $(16g-8,8,4)$, the subgroup of $\mbox{Isom}(\mathbb{H}^{2})$ generated by reflections in the sides of one of these triangles, and for which a presentation is as follows:
\[ \langle a,b,c | a^{2}=b^{2}=c^{2}= (ab)^{8}=(bc)^{4}=(ca)^{16g-8}=1 \rangle.\]
Denote this group by $T_{g}$. By a result of Long-Machlachlan-Reid \cite{LMR} (also Takeuchi \cite{Tak}), there are only finitely many arithmetic triangle groups. By a theorem of Margulis \cite{Marg}, a subgroup of $\mbox{Isom}(\mathbb{H}^{2})$ is arithmetic if and only if its commensurator is not discrete. Therefore, for all sufficiently large $g$, $\mbox{comm}(T_{g})$ is discrete. 

Since $\Lambda_{1}$ is commensurable with $T_{g}$, $\mbox{comm}(\Lambda_{1})=\mbox{comm}(T_{g})$, and is therefore also discrete. 

\end{proof}

Using Lemma \ref{lm:13},  for sufficiently large $g$, we can assume that the quotient space $\mathbb{H}^{2}/(\mbox{comm}(\Lambda_{1})$ is a hyperbolic orbifold $\mathcal{O}_{g}$, and therefore the area of $\mathcal{O}_{g}$ is at least $\pi/21$ (\cite{Far-Mar}). $\mathcal{O}_{g}$ is covered by the orbifold $\mathbb{H}^{2}/\Lambda_{1}$, and the degree of the covering map is equal to the index $[\mbox{comm}(\Lambda_{1}):\Lambda_{1}]$, and is also equal to the ratio of the area of $\mathbb{H}^{2}/\Lambda_{1}$ to the area of $\mathcal{O}_{g}$. The area of a fundamental domain for the action of $\Lambda_{1}$ is bounded above by the area of $\sigma$, and therefore 
\[ r\leq  \frac{2\pi(2g-2)}{\pi/21 }= 42(2g-2). \]

Thus, at most $42(2g-2)$ minimal length, minimally intersecting filling pairs can coincide on the same hyperbolic surface, and hence there are at least $N(g)/(42(2g-2))$ surfaces containing such a pair. Since $N(g)$ grows exponentially by Theorem \ref{thm:1}, so must the number of such surfaces. 

\end{proof}

\begin{remark}\label{rk:4}
Note that, more generally, we can also consider the functions $\mathcal{F}^{(k)}_{g}:\mathcal{M}_{g}\rightarrow \mathbb{R}$, which, given a hyperbolic metric $\sigma$ on $S_{g}$, outputs the length of the shortest filling pair whose complement has $k$ or fewer complementary regions. In this notation, $\mathcal{F}_{g}= \mathcal{F}^{(1)}_{g}$.

\begin{figure}[H]
\centering
	\includegraphics[width=3.5in]{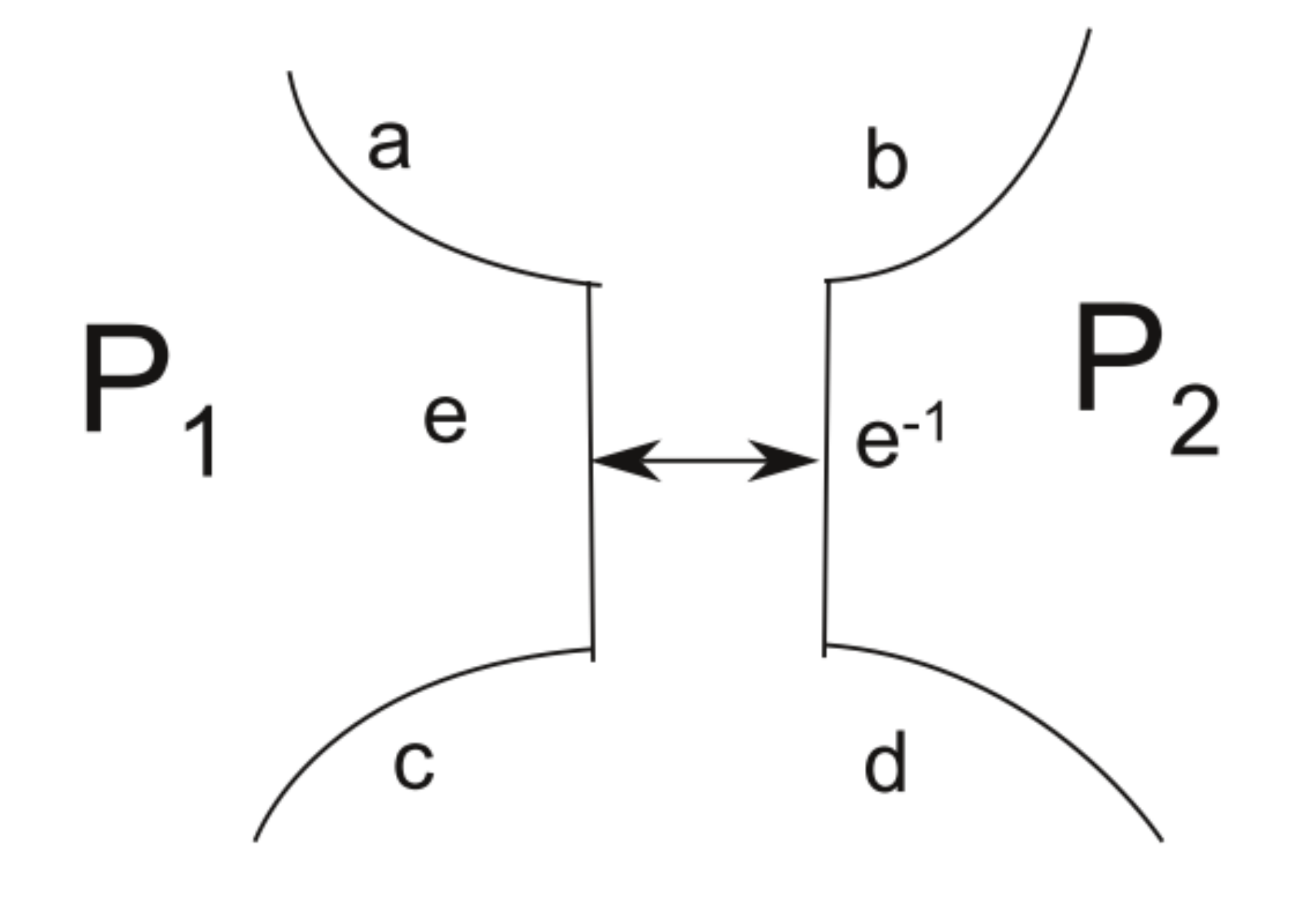}
\caption{A schematic picture of the formation of $P_{1,2}$ from $P_{1}$ and $P_{2}$. We lose a total of $4$ edges in the creation of $P_{1,2}$: $e,e^{-1}$, and $a$ and $b$ concatenate smoothly because they represent adjacent arcs on the same geodesic, and therefore they are replaced by a single edge. By the same argument, so are $c$ and $d$. }
\end{figure}

We claim that $\mathcal{F}^{(2)}_{g}$ has the same minima as $\mathcal{F}^{(1)}_{g}$. To see this, let $(\alpha,\beta)$ be a filling pair of geodesics on $\sigma$ intersecting $2g$ times, so that cutting along $\alpha \cup \beta$ produces a pair $P_{1},P_{2}$ of hyperbolic polygons. Between both polygons, there is a total of $8g$ edges.

Then choose an edge $e$ on $P_{1}$ which glues to an edge $e^{-1}$ on $P_{2}$ as in Figure $9$. Since $e$ and $e^{-1}$ have the same hyperbolic length, we can glue $P_{1}$ and $P_{2}$ together along $e$ to produce a new polygon $P_{1,2}$, which must have $4$ fewer sides than the total number of sides in $P_{1}\sqcup P_{2}$, for we lose the two edges $e,e^{-1}$ which lie in the interior of $P_{1,2}$, and the edges $a, b$ concatenate together smoothly in $P_{1,2}$ to form a single long edge since they correspond to adjacent arcs along the same geodesic on $\sigma$. The same is true for the edges $c,d$. 

Thus the total perimeter of $P_{1} \sqcup P_{2}$ is strictly larger than the perimeter of a regular, right-angled $(8g-4)$- gon, and therefore we've shown:

\begin{corollary} For all $\sigma \in \mathcal{M}_{g}$, $\mathcal{F}^{(2)}_{g} \geq m_{g}/2$, and the same hyperbolic surfaces minimize both $\mathcal{F}^{(1)}_{g}$ and $\mathcal{F}^{(2)}_{g}$. Therefore, the entirety of Theorem $1.4$ also holds for $\mathcal{F}^{(2)}_{g}$. 
\end{corollary}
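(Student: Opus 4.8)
The plan is to deduce everything from the analysis of $\mathcal{F}_g=\mathcal{F}^{(1)}_g$ already carried out in the proof of Theorem \ref{thm:3}, together with the cut-and-paste construction described just above. First I would record that if a filling pair $(\alpha,\beta)$ has complement with at most two regions, then in minimal position the Euler-characteristic count of Lemma \ref{lm:1} forces $i(\alpha,\beta)=2g-2+|D|$ with $|D|\in\{1,2\}$, so $i(\alpha,\beta)$ equals $2g-1$ or $2g$. The case $i(\alpha,\beta)=2g-1$ is the minimally intersecting case, for which the proof of Theorem \ref{thm:3} already gives geodesic length $\ge m_g/2$, with equality precisely when the complementary $(8g-4)$-gon is regular and right-angled. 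So it remains to bound the geodesic length of a filling pair with $i(\alpha,\beta)=2g$, whose complement is a pair of hyperbolic polygons $P_1,P_2$ with $8g$ edges in total and whose length is one half the combined perimeter, since each sub-arc of $\alpha$ or $\beta$ contributes exactly two polygon edges.

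For this case I would run the gluing argument above in detail: since $\sigma$ is connected, some edge $e$ of $P_1$ is identified with an edge $e^{-1}$ of $P_2$, and gluing along this pair produces the hyperbolic polygon $P_{1,2}$. The key verifications are that $P_{1,2}$ is a \emph{convex geodesic} polygon with exactly $8g-4$ edges --- the two edges $e,e^{-1}$ become interior, and at each endpoint of $e$ the two boundary edges of $P_1$ and $P_2$ meeting there are the two halves of the geodesic transverse to $e$ at that intersection point of $\alpha\cup\beta$, hence concatenate smoothly into a single edge --- and that $P_{1,2}$ has the same area as $\sigma$, namely $2\pi(2g-2)$, which is strictly less than the area $\pi(8g-6)$ of an ideal $(8g-4)$-gon. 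Bezdek's theorem \cite{Bez} then bounds the perimeter of $P_{1,2}$ below by $m_g$, so the combined perimeter of $P_1\sqcup P_2$, which exceeds that of $P_{1,2}$ by $2l_h(e)>0$, is strictly greater than $m_g$; hence the pair has length strictly greater than $m_g/2$. Combining the two cases gives $\mathcal{F}^{(2)}_g(\sigma)\ge m_g/2$ for all $\sigma$, and since the length of any $2g$-intersection filling pair is always strictly above $m_g/2$, the equality $\mathcal{F}^{(2)}_g(\sigma)=m_g/2$ holds if and only if $\mathcal{F}^{(1)}_g(\sigma)=m_g/2$; thus $\mathcal{B}^{(2)}_g=\mathcal{B}_g$.

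Finally, to obtain the remaining assertions of Theorem \ref{thm:3} for $\mathcal{F}^{(2)}_g$, I would observe that $\mathcal{F}^{(2)}_g=\inf_{u\in\mathcal{C}'}l_u$ is still an infimum of a countable family of smooth length functions, now indexed by the set $\mathcal{C}'$ of filling pairs whose complement has at most two components; the collar-lemma properness argument and Akrout's generalized-systole criterion apply word for word, so $\mathcal{F}^{(2)}_g$ is a proper topological Morse function, and the injectivity-radius lower bound on $\mathcal{B}^{(2)}_g$ and the at-least-exponential lower bound on $|\mathcal{B}^{(2)}_g|$ are then immediate from the identity $\mathcal{B}^{(2)}_g=\mathcal{B}_g$ and Theorem \ref{thm:1}. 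The only step requiring genuine care is the polygon bookkeeping in the middle paragraph: checking that the edge-pairs flanking $e$ really do lie along common geodesics, so that $P_{1,2}$ is an honest convex geodesic $(8g-4)$-gon of the stated area, which is exactly the hypothesis needed to invoke Bezdek's theorem once.
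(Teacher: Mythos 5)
Your proposal is correct and follows essentially the same route as the paper: glue the two complementary polygons $P_{1},P_{2}$ along an identified edge pair, check that the flanking geodesic arcs concatenate smoothly so the result is an $(8g-4)$-gon of area $2\pi(2g-2)$, and apply Bezdek's isoperimetric inequality to get a strict perimeter excess of $2l_{h}(e)$, forcing $\mathcal{F}^{(2)}_{g}\geq m_{g}/2$ with the same minimizers. Your additional remarks (the Euler-characteristic case split and rerunning the Akrout/collar-lemma arguments for the enlarged countable family) only make explicit what the paper leaves implicit.
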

\end{remark}

Motivated by Corollary $4.4$, we make the following conjecture:

\begin{conjecture} \label{con:1}
Define $\mathcal{Y}_{g}:\mathcal{M}_{g}\rightarrow \mathbb{R}$ to be the ``filling pair systole'' function, which outputs the length of the shortest filling pair. Then $\mathcal{Y}_{g}\geq m_{g}/2$.

\end{conjecture}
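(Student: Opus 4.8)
The plan is to extend the argument that bounded $\mathcal{F}^{(1)}_{g}$ and $\mathcal{F}^{(2)}_{g}$ to filling pairs with arbitrarily many complementary regions, using Bezdek's isoperimetric inequality piece by piece together with a polygon-merging argument. Fix $\sigma\in\mathcal{M}_{g}$ and let $(\alpha,\beta)$ be a shortest filling pair, realized by geodesics; write $i=i(\alpha,\beta)$ and let $d\ge 1$ be the number of complementary regions, so $i=2g-2+d$ by the Euler characteristic computation of Lemma \ref{lm:1}. Cutting along $\alpha\cup\beta$ produces \emph{convex} hyperbolic polygons $P_{1},\dots,P_{d}$: each $P_{j}$ is bounded by geodesic segments whose interior angles are vertical angles at transverse intersection points, hence all strictly less than $\pi$. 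Let $n_{j}$ be the number of sides of $P_{j}$ and $A_{j}$ its area. Then (i) $\sum_{j}A_{j}=2\pi(2g-2)$ by Gauss--Bonnet; (ii) $\sum_{j}n_{j}=4i=8g-8+4d$ since $\alpha\cup\beta$ is $4$-valent; (iii) each $n_{j}$ is even and at least $4$, because the edges of $P_{j}$ alternate between sub-arcs of $\alpha$ and of $\beta$ along $\partial P_{j}$ (and no bigon can occur, as the geodesics are in minimal position). Moreover $\sum_{j}\mathrm{perim}(P_{j})=2\mathcal{Y}_{g}(\sigma)$, since each sub-arc of $\alpha$ or $\beta$ contributes exactly two edges. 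By Bezdek's theorem, $\mathrm{perim}(P_{j})\ge R(n_{j},A_{j})$, where $R(n,a)$ denotes the perimeter of the regular hyperbolic $n$-gon of area $a$ (which exists precisely when $a<(n-2)\pi$). Thus it suffices to prove
\[ \sum_{j=1}^{d} R(n_{j},A_{j}) \;\ge\; m_{g} \]
for all data satisfying (i)--(iii), with equality only when $d=1$.

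The next step is to reduce the displayed inequality to a single \emph{merging inequality} for regular polygons:
\[ R(p,a)+R(q,b)\;\ge\;R(p+q-4,\,a+b)\qquad (p,q\ \text{even},\ p,q\ge 4). \]
Granting this, one merges the $P_{j}$ one at a time; since each merge decreases the total side-count by $4$ and $\sum_{j}n_{j}-4(d-1)=8g-4$, after $d-1$ merges one reaches $R(8g-4,\sum_{j}A_{j})=R(8g-4,2\pi(2g-2))=m_{g}$, the perimeter of the regular right-angled $(8g-4)$-gon. The merging inequality would follow from two properties of $R$: (a) for fixed area $a$, $R(n,a)$ is strictly decreasing in $n$ (a regular $n$-gon of fixed area approaches the circle of that area as $n\to\infty$); and (b) for fixed $n$, $R(n,\cdot)$ is concave on $(0,(n-2)\pi)$ with $R(n,0)=0$, hence subadditive. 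Assuming $p\ge q$, one then has $R(p+q-4,a+b)\le R(p,a+b)\le R(p,a)+R(p,b)\le R(p,a)+R(q,b)$, using (a) for the first step (as $p+q-4\ge p$), subadditivity for the second, and (a) again for the third (as $p\ge q$). Both (a) and (b) are elementary statements about the explicit formula $R(n,a)=2n\,\cosh^{-1}\!\bigl(\cos(\pi/n)/\sin\theta_{n}(a)\bigr)$ with $\theta_{n}(a)=\tfrac{(n-2)\pi-a}{2n}$, obtained by decomposing the regular $n$-gon into $2n$ right triangles with angles $\pi/2,\ \pi/n,\ \theta_{n}(a)$; one checks directly that this specializes to $m_{g}$ when $n=8g-4$, $a=2\pi(2g-2)$.

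For the equality claim, the displayed chain is an equality only if $q=4$ and $R(p,\cdot)$ is additive across $a,b$, which by strict concavity forces $a=0$ or $b=0$; iterating, equality in the sum forces $d=1$, whereupon Bezdek forces $P_{1}$ to be the regular right-angled $(8g-4)$-gon and $(\alpha,\beta)$ to be minimally intersecting. So the minimizing set of $\mathcal{Y}_{g}$ would again be exactly $\mathcal{B}_{g}$, subsuming the corollary of Remark \ref{rk:4}.

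I expect the main obstacle to be the bookkeeping around degenerate configurations. The intermediate "merged polygon" $(p+q-4,\,a+b)$ can fail the existence constraint $a+b<(p+q-6)\pi$ even though the final $(8g-4)$-gon with area $2\pi(2g-2)$ does not, so the order of merging must be chosen with care (for instance, always merging a quadrilateral $P_{j}$ into another polygon first, since $R(4,a)+R(N,b)\ge R(N,a+b)$ holds directly from (a) and (b) with no change in side-count), or the whole reduction must be replaced by a direct Lagrange-multiplier analysis of $\sum_{j}R(n_{j},A_{j})$ subject to $\sum_{j}A_{j}=2\pi(2g-2)$ and $\sum_{j}n_{j}=8g-8+4d$. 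Establishing the concavity property (b) rigorously from the $\cosh^{-1}$ expression, and controlling $R$ and its area-derivative near $a=0$ and near $a=(n-2)\pi$, is the other place where genuine work is required.
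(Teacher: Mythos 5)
First, note that the statement you are proving is stated in the paper as Conjecture \ref{con:1} and is left open there; the closest the paper comes is Remark \ref{rk:4}, which handles only the case of at most two complementary regions by physically gluing the two actual polygons along a shared edge and applying Bezdek once to the resulting $(8g-4)$-gon. Your setup (the counts $\sum_j n_j=4i$, $\sum_j A_j=2\pi(2g-2)$, $n_j$ even and $\geq 4$, $\sum_j \mathrm{perim}(P_j)=2\mathcal{Y}_g(\sigma)$, and the per-piece application of Bezdek) is sound and is the natural generalization of that remark. The genuine gap is in your property (b): $R(n,\cdot)$ is \emph{not} concave on $(0,(n-2)\pi)$. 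Since $R(n,a)\to+\infty$ as $a\to(n-2)\pi^{-}$ (the perimeter of a polygon degenerating to the ideal one blows up), no finite concave function on that interval can do this; equivalently, from your explicit formula one computes $\partial_a R\to+\infty$ at \emph{both} endpoints, so $R(n,\cdot)$ is concave near $0$ but convex near the ideal area. Consequently the subadditivity $R(n,a+b)\leq R(n,a)+R(n,b)$ genuinely fails when $a+b$ approaches $(n-2)\pi$ (the left side diverges while the right side stays bounded), and your three-step chain proving the merging inequality $R(p,a)+R(q,b)\geq R(p+q-4,a+b)$ collapses precisely in the regime you flagged only as an ``existence'' issue. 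Whether the needed instances of the merging inequality hold requires a priori control on how large the area of a complementary region can be relative to its side count, and that is where the real difficulty of the conjecture lives.

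A route that avoids regular models altogether is to iterate the paper's Remark \ref{rk:4} directly on the \emph{actual} polygons: choose a spanning tree of the dual adjacency graph of $P_1,\dots,P_d$ and glue along its $d-1$ edges; each gluing removes four sides and \emph{decreases} total perimeter by twice the length of the glued edge, so $2\mathcal{Y}_g(\sigma)\geq\mathrm{perim}(P_{\mathrm{final}})$ with $P_{\mathrm{final}}$ an $(8g-4)$-gon of area $2\pi(2g-2)$, and one application of Bezdek would finish. But this trades your analytic problem for a geometric one: the developed image of the glued-up region in $\mathbb{H}^2$ is in general a non-convex, possibly self-overlapping immersed polygon, and Bezdek's isoperimetric inequality is not stated for such objects. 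Either that immersion issue or a correct (range-restricted) proof of your merging inequality must be supplied; as written, neither is, and the proposal does not close the conjecture.
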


\end{document}